\documentclass[leqno,12pt]{amsart}
\usepackage[margin=1.1in]{geometry}

\usepackage{amssymb,mathdots,epsfig}
\usepackage{enumitem}
\usepackage[T1]{fontenc}
\usepackage{fullpage}
\usepackage{url} 
\usepackage[colorlinks,unicode,pdfencoding=auto]{hyperref} % better urls in bibliography
\hypersetup{linkcolor=blue, citecolor=blue}

\usepackage{graphicx}

\theoremstyle{plain}
\newtheorem{theorem}{Theorem}
\newtheorem{corollary}{Corollary}

\newtheorem{lemma}{Lemma}

\theoremstyle{definition}
\newtheorem{definition}{Definition}
\newtheorem{example}{Example}

\theoremstyle{remark}

\numberwithin{equation}{section}

\usepackage{array,longtable}
\newcolumntype{C}{>{$}c<{$}}

\newcommand{\Z}{\mathbb{Z}}
\newcommand{\N}{\mathbb{N}}

\newcommand{\<}{\langle}
\renewcommand{\>}{\rangle}

\newcommand{\lf}{\left}
\newcommand{\rh}{\right}

\renewcommand{\(}{\begin{equation}}
\renewcommand{\)}{\end{equation}}

\newcommand{\midtilde}{\raisebox{-0.25\baselineskip}{\textasciitilde}}

\makeatletter
\newcommand{\leqnomode}{\tagsleft@true}
\newcommand{\reqnomode}{\tagsleft@false}
\makeatother

\setbox0=\hbox{$+$}
\newdimen\plusheight
\plusheight=\ht0
\def\+{\;\lower\plusheight\hbox{$+$}\;}

\setbox0=\hbox{$-$}
\newdimen\minusheight
\minusheight=\ht0
\def\-{\;\lower\minusheight\hbox{$-$}\;}

\setbox0=\hbox{$\cdots$}
\newdimen\cdotsheight
\cdotsheight=\plusheight
\def\cds{\lower\cdotsheight\hbox{$\cdots$}}

\makeatletter
\def\section{%
    \@startsection{section}{1}%
    \z@{3\linespacing\@plus\linespacing}{.5\linespacing}%
    {\normalfont\large\bfseries}%
}
\makeatother

\makeatletter
\def\@seccntformat#1{%
  \protect\textup{\protect\@secnumfont
    \csname the#1\endcsname
\space\space
  }%
}
\makeatother

\makeatletter
\def\subsection{\@startsection{subsection}{2}%
  \z@{1.5\linespacing\@plus.5\linespacing}{-.5em}%
  {\normalfont\bfseries}}
\makeatother

\newcommand{\xa}{\alpha}
\newcommand{\xb}{\beta}
\newcommand{\xc}{\chi}
\newcommand{\xd}{\delta} % renew
\newcommand{\xe}{\epsilon}

\newcommand{\xg}{\gamma}

\newcommand{\xk}{\kappa}

\newcommand{\xr}{\rho}
\newcommand{\xs}{\sigma}

\newcommand{\xS}{\Sigma}

\usepackage{fancyhdr}
\newcommand{\tfr}[2]{\tfrac{#1}{#2}}
\newcommand{\tf}[2]{\tfrac{#1}{#2}}
\newcommand{\fr}[2]{\frac{#1}{#2}}

\makeatletter
\renewcommand{\pmod}[1]{\allowbreak\mkern6mu({\operator@font mod}\,\,#1)}
\makeatother

\renewcommand{\mod}[1]{\,\,(\mathrm{mod}\,#1)}

\renewcommand{\l}{\ell}

\renewcommand{\epsilon}{\varepsilon}

\newcommand{\tops}[2]{\texorpdfstring{#1}{#2}}
\renewcommand{\.}{\mspace{1.5mu}} % half of a \,
\newcommand{\?}{\mspace{-1.5mu}} % half of a \!

\DeclareMathOperator{\sgn}{sgn}

\usepackage{doi}

\begin{document}

%%%%%%%%%%%%%%%%%%%%%%%%%%%%%%%%%%%%%%%%%%
% TITLE MATTER

\title[The $p$-Dissection of a Product]
       {The $p$-Dissection of a Product of Quintuple Products}

\author{Taylor Daniels, Timothy Huber, James McLaughlin, and Dongxi Ye}

\address{Mathematics Deptartment, Purdue University, West Lafayette, IN 47907}
\email{daniel84@purdue.edu}

\address{
School of Mathematical and Statistical Sciences, University of Texas Rio Grande Valley, Edinburg, TX 78539}
\email{timothy.huber@utrgv.edu}

\address{Mathematics Department\\
West Chester University, West Chester, PA 19383}
\email{jmclaughlin2@wcupa.edu}

\address{
School of AI and Liberal Arts \\ Beijing Normal-Hong Kong Baptist University, Zhuhai 519082, Guangdong,
People's Republic of China}
\email{dongxiye@bnbu.edu.cn}

\keywords{Quintuple product; Sign change; Vanishing; Winquist's identity}
\thanks{Dongxi Ye was supported by the Guangdong Basic and Applied Basic Research Foundation (Grant No. 2024A1515030222).}

\subjclass[2000]{Primary: 11B65. Secondary: 05A19.}

% \date{\today}

\begin{abstract}
    Let $p \equiv 1 \pmod{4}$ be prime, let $m$ and $n$ be integers such that $p=m^2+n^2$, and let $b$ be a positive integer. Let $Q(z,q) = (z,q/z,q;q)_{\infty}(qz^2,q/z^2;q^2)_{\infty}$ denote the product appearing in the quintuple product identity. We derive explicit formulae for the $p$-dissection of $Q(q^{bm},q^p)Q(q^{bn},q^p)$, and determine sign patterns in \mbox{length-$p$} arithmetic progressions of the Taylor series coefficients of the associated quotient $Q(q^{bm},q^{p})Q(q^{bn},q^p)/(q^p;q^p)_{\infty}^2$. Some combinatorial applications of the $p$-dissection formulae are also given.
\end{abstract}

\allowdisplaybreaks

\maketitle

%%%%%%%%%%%%%%%%%%%%%%%%%%%%%%%%%%%%%%%%%%
% /TITLE MATTER

%%%%%%%%%%%%%%%%%%%%%%%%%%%%%%%%%%%%%%%%%%
\section{Introduction}
%%%%%%%%%%%%%%%%%%%%%%%%%%%%%%%%%%%%%%%%%%

Using the standard $q$-Pochhammer symbols $(a_1,  \dots, a_j; q)_{\infty} = (a_1;q)_{\infty} \cdots (a_j;q)_{\infty}$, let
    \(
        \label{qtpzqdef}
            Q(z,q) := (z,q/z,q;q)_{\infty}(qz^2,q/z^2;q^2)_{\infty}
    \)
denote the \emph{quintuple product}.
In a recent paper \cite{DMcL25}, two of the present authors prove the following result: \emph{If one defines the quantities $b_{t}$ via
    \[
        B(q):= Q(q^2,q^{13})Q(q^5,q^{13})Q(q^6,q^{13}) =: \sum_{t=0}^{\infty} b_t q^t,
    \]
then $b_{13t+3} = b_{13t+9} = b_{13t+11} = 0$, for all $t \geq 0$.} This, along with numerical experiments, naturally motivates questions about vanishing coefficients in general expressions involving quintuple products $Q(q^{bn},q^{p})$, in particular when $p$ is a prime with \mbox{$p \equiv 1 \mod{4}$}. Moreover, it is of general interest to determine possible sign patterns in the Taylor series coefficients of such expressions.  

In this paper we primarily consider coefficients arising in the following manner: Fixing a prime \mbox{$p \equiv 1 \mod{4}$}, let $m$ and $n$ be integers such that $p = m^{2} + n^{2}$, and let $b > 0$ be some integer. For this $b$, $m$, $n$, and $p$ then, we define the sequence $(a_{t})_{t \in \Z}$ via the series expansion
    \(
    \label{eq:atDefin}
        Q(q^{bm},q^{p})Q(q^{bn},q^{p}) = 
        \sum_{t=-\infty}^{\infty} a_{t}q^{t}.
    \)
We are particularly interested in vanishings patterns of the $a_{t}$ within ``arithmetic progressions modulo $p$'', i.e., sequences of the form $(a_{pt+r})_{t\in\Z}$, where $0 \leq r < p$. Our primary result concerning these vanishings is given in the following theorem.

\begin{theorem}
    \label{thm:MainVanishing}
    Fix a prime $p \equiv 1 \mod{4}$, write $p = m^2 + n^2$, let $b > 0$, and define \mbox{$(a_{t})_{t \in \Z}$} as in \eqref{eq:atDefin}. In addition, let $w$ satisfy
        \[
            w \equiv \bar{2}(m+n) \mod{p}.
        \]
    If $p \equiv 1 \mod{12}$, then
        \[
            a_{pt+bw} = a_{pt + b(w-3b)} = 0 \qquad (t \in \Z),
        \]
    and if $p \equiv 5 \mod{12}$, then
        \[
            a_{pt+bw(1-3b\bar{m})} = a_{pt+bw(1-3b\bar{n})} = 0 \qquad (t \in \Z).
        \]
\end{theorem}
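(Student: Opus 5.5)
The plan is to expand each factor with the quintuple product identity in its series form,
\[
Q(z,q)=\sum_{k\in\Z} q^{k(3k+1)/2}\bigl(z^{3k}-z^{-3k-1}\bigr).
\]
Writing $z^{-3k-1}=z^{3k'}\,z^{-1}$ with $k'=-k-1/3$ is awkward, so I would instead parametrize the two parts of each factor uniformly. Set $\epsilon\in\{0,1\}$ and let $x=3k$ when $\epsilon=0$ and $x=-3k-1$ when $\epsilon=1$. Then $Q(q^{bm},q^p)Q(q^{bn},q^p)$ becomes a signed sum over $(k,l,\epsilon,\delta)$ with sign $(-1)^{\epsilon+\delta}$ and exponent
\[
E=p\Bigl(\tfrac{k(3k+1)}{2}+\tfrac{l(3l+1)}{2}\Bigr)+bm\,x(k,\epsilon)+bn\,y(l,\delta).
\]
Since the $p$-part is invisible modulo $p$, we have $E\equiv b(mx+ny)\pmod p$. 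Thus $a_{pt+r}$ collects exactly the lattice terms with $b(mx+ny)\equiv r$. Completing the square in the quadratic part gives
\[
\tfrac{k(3k+1)}{2}=\tfrac{(6k+1)^2-1}{24},
\]
and $6k+1=\pm(2x+1)$ in both cases. So with $X=2x+1$ and $Y=2y+1$, both odd and $\not\equiv 0\pmod 3$, one gets
\[
24E=p(X^2+Y^2)-2p+12b(mX+nY)-12b(m+n).
\]
This is where $w\equiv\bar2(m+n)$ enters.

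The core step is to pass to Gaussian-integer coordinates: $U=mX+nY$ and $V=nX-mY$, so that $U^2+V^2=p(X^2+Y^2)$. Then $24E=U^2+V^2+12bU-2p-12b(m+n)$, and completing the square once more gives
\[
24E=(U+6b)^2+V^2-36b^2-2p-12b(m+n).
\]
The residue of $E$ modulo $p$ is controlled by $U$ alone. I would then look for an involution on the index set that fixes $U$ (or sends $U+6b\mapsto -(U+6b)$) and fixes $V^2$, while flipping the sign $(-1)^{\epsilon+\delta}$. A natural candidate is the reflection $V\mapsto -V$ with $U$ fixed, which in the original coordinates is $(X,Y)\mapsto \frac{1}{p}\bigl((m^2-n^2)X+2mnY,\ 2mnX-(m^2-n^2)Y\bigr)$. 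This only preserves integrality when $p\mid U$ or $p\mid V$. Alternatively, $U+6b\mapsto-(U+6b)$ with $V$ fixed preserves $E$ exactly. Imposing $U\equiv -3b\pmod p$, or the analogous conditions, produces the distinguished residues. Translating $U\equiv \text{const}$ back to $r\equiv b(mx+ny)=b\bigl(U-(m+n)\bigr)/2$ should yield exactly $r\equiv bw$ and $r\equiv b(w-3b)$ in the first case.

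The sign flip requires checking the parity and mod-$3$ conditions: the image $(X',Y')$ must again consist of odd integers prime to $3$, and exactly one of the $\epsilon,\delta$ classes (determined by $X\bmod 3$ and $Y\bmod 3$) must change. This is where $p\bmod 12$ matters. If $p\equiv1\pmod{12}$, then $3\mid mn$ fails in a way that lets the map act nicely on residues mod $3$. If $p\equiv5\pmod{12}$, one of $m,n$ must be divisible by $3$, since $m^2+n^2\equiv2\pmod3$ is impossible unless neither is, and the mod-$3$ bookkeeping changes. In that case I expect a reflection fixing only the $X$- (or $Y$-) coordinate, rescaled by $\bar m$ (resp. $\bar n$), to work. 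This would explain the residues $bw(1-3b\bar m)$ and $bw(1-3b\bar n)$.

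The main obstacle is finding the correct involution in each congruence class and verifying three things: that it maps admissible $(X,Y)$ to admissible $(X',Y')$ (integrality, oddness, coprimality to $3$), that it is fixed-point free on the relevant residue class, and that it reverses the sign. I would first test the candidate maps numerically on $p=13$ ($m=2,n=3$) and $p=5$ ($m=1,n=2$) to pin down the exact formula before proving the congruence bookkeeping in general.
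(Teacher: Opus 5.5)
Your Gaussian-coordinate set-up is a viable route. The paper remarks that a direct sign-reversing-involution proof exists, although it instead reads the vanishing off its $p$-dissections: a factor $Q(q^{p^2j},q^{p^2})$ when $p\equiv 1\pmod{12}$, and a factor $\langle q^{p^2j};q^{p^2}\rangle_\infty$ inside $W$ when $p\equiv 5\pmod{12}$. However, your proposal stops before the decisive step, and several details it depends on are wrong.

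First, your series is that of $Q(z^{-1},q)=-z^{-1}Q(z,q)$, not of the paper's $Q(z,q)$, whose constant term is $1-z$. The correct expansion is $\sum_k q^{k(3k+1)/2}(z^{-3k}-z^{3k+1})$, so the $z$-exponent attached to $X$ is $(1-X)/2$. Your version shifts every exponent by $-b(m+n)$, so your class $p\mid U$ would give $r\equiv -bw$, not $bw$. Correctly, $24E=(U-6b)^2+V^2-36b^2-2p+12b(m+n)$ and $E\equiv bw-\bar 2bU\pmod{p}$, so the targets are $U\equiv 0$ and $U\equiv 6b$. Your $U\equiv -3b$ is wrong even in your own convention: $(U,V)$ comes from an integral $(X,Y)$ iff $mV\equiv nU\pmod{p}$, so the point $(-12b-U,V)$ is admissible only when $2U\equiv -12b$.

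Second, your mod-$3$ facts are reversed. The case $p\equiv 1\pmod{12}$ forces exactly one of $m,n$ to be divisible by $3$, while $p\equiv 5\pmod{12}$ forces $3\nmid mn$. This is exactly what decides sign reversal, because $(-1)^{\epsilon+\delta}=1$ iff $XY\equiv 1\pmod{3}$. With $3\mid n$, the map $V\mapsto -V$ (defined exactly when $p\mid U$) gives $X'=X-2nV/p\equiv X$ and $Y'=Y+2mV/p\equiv -Y\pmod{3}$. The map $U\mapsto 12b-U$ (defined exactly when $U\equiv 6b$) gives $X'=X+mT\equiv -X$ and $Y'=Y+nT\equiv Y$, with $T=2(6b-U)/p$ even. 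So, once corrected, both are fixed-point-free sign-reversing involutions, and the case $p\equiv 1\pmod{12}$ goes through.

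The genuine gap is the case $p\equiv 5\pmod{12}$, which you leave as an expectation, and the guess you offer cannot work. The only nontrivial map fixing $X$ and preserving $E$ is $Y\mapsto 12bn/p-Y$, which is not integral when $p\nmid b$. Moreover, when $3\nmid mn$ both reflections above preserve $XY \bmod 3$; for instance $V\mapsto -V$ gives $X'\equiv mnY$, $Y'\equiv mnX$.

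The missing idea is to use the two diagonal reflections of the $(U,V)$-frame through the centre: $(U,V)\mapsto(6b+V,\,U-6b)$ and $(U,V)\mapsto(6b-V,\,6b-U)$. Both preserve $(U-6b)^2+V^2$. They send admissible points to admissible points exactly on the classes $U\equiv 3b(1+n\bar m)$ and $U\equiv 3b(1+m\bar n)$, i.e.\ $r\equiv bw(1-3b\bar m)$ and $r\equiv bw(1-3b\bar n)$. For the first map one gets $pX'=6b(m-n)+2mnX+(n^2-m^2)Y$ and $pY'=6b(m+n)+(n^2-m^2)X-2mnY$, so $X'$ and $Y'$ are odd. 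Then $m^2\equiv n^2\equiv 1$ and $p\equiv -1\pmod{3}$ give $X'\equiv mnX$ and $Y'\equiv -mnY$, hence $X'Y'\equiv -XY\pmod{3}$. This reverses the sign and rules out fixed points. The second map is analogous. For $p\equiv 1\pmod{12}$ these diagonal maps preserve the sign, consistent with the theorem.

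Until these maps are written down and the integrality, parity and mod-$3$ checks are carried out, your proposal is a strategy rather than a proof. Completed, it would give a shorter, self-contained proof of the vanishing than the paper's. It would not, however, give the explicit dissection formulas, and hence the sign patterns, that the paper's route produces.
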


Although a direct proof of Theorem \ref{thm:MainVanishing} is possible by constructing sign-changing involutions of the series \eqref{eq:atDefin} (dependent on the residue $p$ modulo 12), we establish Theorem \ref{thm:MainVanishing} as a corollary to some general \emph{$p$-dissection} formulae for said series; we recall that a $p$-dissection of a series $A(q) = \sum_{t} a_t q^{t}$ is a formula 
    \[
        A(q) = A_{0}(q^{p}) + q A_{1}(q^{p}) + \cdots + q^{p-1} A_{p-1}(q^{p}),
    \]
where each $A_{i}(q)$ is some series $\sum_{t} a_{t}^{(i)} q^{t}$. Our dissections of \eqref{eq:atDefin} are built upon a general result of Liu and Yang \cite[Thm.~2]{LY09} concerning products
    \[
        \<z_{1};q\>_{\infty}\<z_{2};q\>_{\infty},
        \qquad\text{where}\qquad 
        \<z;q\>_{\infty} := (z,q/z,q;q)_{\infty}.
    \]

Just as the cases $p \equiv 1 \mod{12}$ and $p \equiv 5 \mod{12}$ were distinguished in Theorem \ref{thm:MainVanishing}, our $p$-dissections of  \eqref{eq:atDefin} also treat these cases separately. For illustration, a dissection formula for the case $p \equiv 1 \mod{12}$ is given in the following theorem; the full result is stated and proved as Theorem \ref{Qdissect1} in section \ref{sec:1(mod12)}.

\begin{theorem} \label{t1}
    Let $p \equiv 1 \pmod{12}$ with $p=m^2+n^2$ and $3 \mid n$, and let $b$ be a positive integer. If $m \equiv 1 \pmod{3}$, then 
        \[
        \begin{aligned}
            & Q(q^{bm},q^p)Q(q^{bn},q^p) \\
            &\qquad = \sum_{k=0}^{p-1} q^{3bkm+\fr{k(3k-1)p}{2}}
            Q\?\big({q^{bp+kmp + \fr{(p-m-n)p}{6}},q^{p^2}}\big) Q\?\big({q^{-knp + \fr{(p-m+n)p}{6}},q^{p^2}}\big).
        \end{aligned}
        \]
\end{theorem}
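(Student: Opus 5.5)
The plan is to argue at the level of series and not through products. I would first rewrite the quintuple product identity in the one-variable form
\[
Q(z,q)=\sum_{\ell\equiv \pm 1 \ (\mathrm{mod}\ 6)} \chi(\ell)\, q^{(\ell^2-1)/24}\, z^{(\ell-1)/2},
\]
where $\chi(\ell)=1$ for $\ell\equiv 1 \pmod 6$ and $\chi(\ell)=-1$ for $\ell\equiv 5\pmod 6$. This is the usual identity $Q(z,q)=\sum_{k}q^{(3k^2+k)/2}(z^{3k}-z^{-3k-1})$, with $\ell=6k+1$ for the first term and $\ell=-6k-1$ for the second.

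Substituting $z=q^{bm}$ and $z=q^{bn}$ with base $q^p$, the left side of Theorem~\ref{t1} becomes a double sum over $(\ell_1,\ell_2)$ with weight $\chi(\ell_1)\chi(\ell_2)$ and exponent
\[
\frac{p(\ell_1^2+\ell_2^2-2)}{24}+\frac{b\bigl(m(\ell_1-1)+n(\ell_2-1)\bigr)}{2}.
\]

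Next I would apply the Gaussian-integer rotation attached to $p=m^2+n^2$. Set $u=m\ell_1+n\ell_2$ and $v=-n\ell_1+m\ell_2$, so that $u^2+v^2=p(\ell_1^2+\ell_2^2)$ and $b(m\ell_1+n\ell_2)=bu$. The exponent then becomes $\bigl((u+6b)^2+v^2\bigr)/24$ plus a constant, up to the linear pieces. The point is that the quadratic part of the exponent is now $1/p$ times the quadratic part for base $q^{p^2}$, which is what produces the $Q(\cdot,q^{p^2})$ factors.

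Concretely, I would parametrize all $(\ell_1,\ell_2)$ in the form
\[
\ell_1 = mL_1 - nL_2 + c_1(k),\qquad \ell_2 = nL_1 + mL_2 + c_2(k),
\]
with $L_1,L_2\equiv\pm1\pmod 6$ and $k$ running over $0,\dots,p-1$, where the shift $(c_1(k),c_2(k))$ is a multiple of $6$ chosen so that $k$ indexes the cosets. Since the matrix $\begin{pmatrix} m&-n\\ n&m\end{pmatrix}$ has determinant $p$, its image has index $p$ in $\mathbb Z^2$. The coset representatives can be taken along a line, for example shifts proportional to $6k(1,0)$ adjusted by the $(p-m\mp n)/6$ offsets visible in the statement. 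Expanding the exponent should then give $p\cdot\bigl(p(L_1^2+L_2^2)\bigr)/24$, plus terms linear in $L_1$ and $L_2$ that rebuild $z_1^{(L_1-1)/2}$ and $z_2^{(L_2-1)/2}$ with $z_1=q^{bp+kmp+(p-m-n)p/6}$ and $z_2=q^{-knp+(p-m+n)p/6}$, plus a leftover $q^{3bkm+k(3k-1)p/2}$. I would determine the shifts by matching the linear terms against these target exponents.

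The main obstacle is the arithmetic bookkeeping, which has three parts:
\begin{itemize}
\item showing that the map $(k,L_1,L_2)\mapsto(\ell_1,\ell_2)$ is a bijection onto pairs with $\ell_i\equiv\pm1\pmod 6$;
\item showing that $\chi(\ell_1)\chi(\ell_2)=\chi(L_1)\chi(L_2)$;
\item checking that all exponents such as $(p-m\pm n)/6$ are integers.
\end{itemize}
This is exactly where the hypotheses $p\equiv1\pmod{12}$, $3\mid n$ and $m\equiv1\pmod 3$ enter. Modulo $6$ one has $m\equiv\pm1$ and $n\equiv 0$ or $3$, so $\ell_1\equiv mL_1-nL_2\equiv mL_1\pmod 3$. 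The condition $m\equiv1\pmod3$ then gives $\ell_1\equiv L_1\pmod 3$, and similarly $\ell_2\equiv mL_2\equiv L_2\pmod 3$, and parity is automatic. Since $\chi$ depends only on the class mod $3$ for odd arguments coprime to $3$, this preserves the signs.

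I would verify the bijection by a counting/index argument. The rotation matrix has determinant $p$, and $p$ is coprime to $6$, so the sublattice structure mod $6$ is unaffected and the $p$ shifts exhaust the cosets. Finally, I would recognize each inner double sum over $(L_1,L_2)$ as the product $Q(z_1,q^{p^2})\,Q(z_2,q^{p^2})$ by the series form above, which yields the stated $p$-dissection.
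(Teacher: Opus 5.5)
Your lattice-rotation argument is a genuinely different route from the paper's, and it does go through, but only after you correct the starting formula. The series $\sum_{\ell}\chi(\ell)q^{(\ell^2-1)/24}z^{(\ell-1)/2}=\sum_k q^{(3k^2+k)/2}(z^{3k}-z^{-3k-1})$, summed over $\ell\equiv\pm1\pmod{6}$, is the paper's $Q(z^{-1},q)=-z^{-1}Q(z,q)$, not $Q(z,q)$. You can see this from the $q^0$ terms: yours is $1-z^{-1}$, while that of $Q(z,q)$ is $1-z$. The correct expansion is $Q(z,q)=\sum_{\ell}\chi(\ell)q^{(\ell^2-1)/24}z^{(1-\ell)/2}$; to check it, put $\ell=1-6k$ and $\ell=-1-6k$ in \eqref{qpideq}.

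If you run your matching with your version on both sides, the linear terms can still be matched, using the negated shift. However, the leftover exponent then comes out as $3bkm+\frac{k(3k-1)p}{2}+\bigl(bp+k(m-n)p+\frac{(p-m)p}{3}\bigr)-b(m+n)$, where the bracket is the exponent of $z_1z_2$. So the step ``plus a leftover $q^{3bkm+k(3k-1)p/2}$'' fails as written; the discrepancy is exactly the factor $-z^{-1}$ on each of the four quintuple products.

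With the correct expansion and $R=\begin{pmatrix} m&-n\\ n&m\end{pmatrix}$, matching the coefficients of $L_1$ and $L_2$ forces $R^{T}c=(m+n-p-6km,\ m-n-p+6kn)$, that is,
\[
c(k)=(1-m+n-6k,\ 1-m-n).
\]
A direct expansion then shows the constant term is exactly $3bkm+k(3k-1)p/2$. This $c(k)$ lies in $6\mathbb{Z}^2$ precisely because $m\equiv1\pmod{3}$, $3\mid n$, and $m+n$ is odd. The $c(k)$ for $0\le k\le p-1$ are pairwise incongruent modulo $R\mathbb{Z}^2$, since $(j,0)\in R\mathbb{Z}^2$ if and only if $p\mid j$.

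One side remark of yours is false: $m\equiv\pm1\pmod{6}$ need not hold. For $p=13$ the hypotheses force $m=-2$, $n=\pm3$. What you actually use is that $R\equiv I\pmod{3}$ and that $R \bmod 2$ is the identity or the coordinate swap (exactly one of $m,n$ is odd). This gives both $\chi(\ell_i)=\chi(L_i)$ and the fact that $R^{-1}(\ell-c(k))$ has both coordinates $\equiv\pm1\pmod{6}$, which is the surjectivity half of your bijection.

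For comparison, the paper never expands $Q$ as a single theta series. It proceeds as follows:
\begin{itemize}
\item it uses $Q(z,q)=\langle -qz^3;q^3\rangle_\infty-z\langle -q^2z^3;q^3\rangle_\infty$ to split the product into four products of two triple products;
\item it applies the Liu--Yang dissection (Lemma~\ref{lwt1}) to each of the four;
\item it pairs the summands with indices $\kappa,\kappa-n\mu,\kappa-m\mu,\kappa-(m+n)\mu$ that land in a given residue class (Lemma~\ref{lem:rComponent}), and normalizes them with Lemma~\ref{lem:Shifts};
\item it factors the resulting four-term expression into two two-term expressions, each recognized as $Q(\cdot,q^{p^2})$, with the exponent identities checked symbolically.
\end{itemize}

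Your argument is in effect the Liu--Yang mechanism run directly on the double quintuple series. It is self-contained and replaces the $\mu$, $s$, $M$, $N$ bookkeeping with one explicit shift vector. It also shows exactly where each hypothesis enters. Since terms are grouped by cosets of $R\mathbb{Z}^2$ rather than by residues of exponents, it is indifferent to whether $p\mid b$, which the paper's choice of $\kappa$ tacitly excludes.

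What the paper's heavier route buys is uniformity. The same $r$-component formula of Lemma~\ref{lem:rComponent} also yields the $p\equiv5\pmod{12}$ dissection. There $R$ no longer preserves the classes modulo $3$, and the components are Winquist products rather than products of two quintuple products, which a lattice argument would have to identify separately.
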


When $p \equiv 5 \pmod{12}$, our dissections involve a product appearing in Winquist’s identity. One such formula is stated below, and the full result is given in Theorem \ref{thm:QuintDiss(5mod12)}.

\begin{theorem}
    \label{t2}
    Let $p \equiv 5 \mod{12}$ with $p = m^2+n^2$, let $b$ be a positive integer, and let
        \[
            W(a,b,q) = \<a,b,ab,\tf{a}{b};q\>_{\infty}(q;q)_{\infty}^{-2}.
        \] 
    Defining $\mu$ via the relation $3m\mu \equiv 1 \mod{p}$, let
        \[
            s := (3m\mu-1)/p.
        \]
    If $m \equiv -n \mod{3}$, and one sets
        \[
            \xa_{k} := 3kmp + 3bp - \fr{(m+n)p}{2} + \fr{3p^2}{2}, \qquad 
            \xb_{k} := 3knp + \fr{(m-n)p}{2} + \fr{3p^2}{2},
        \]  
        \(
            \xc_{k} := 3bkm + p\Big[\tfr{k(3k-1)}{2} + (1-ns)(kn+\tfr{(p+m-n(1+ps))}{6})\Big],
        \)
    then one has
        \(
        \label{eq:WinqDiss(m=-n(3))2}
            Q(q^{bm},q^{p})Q(q^{bn},q^{p}) = \sum_{k=0}^{p-1} q^{\xc_{k}}W(-q^{\xa_{k}/3},-q^{(\xb_{k}-nsp^2)/3},q^{p^2}).
        \)
\end{theorem}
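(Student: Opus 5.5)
Our approach is to expand both quintuple products as theta-type series using the quintuple product identity in the form
\[
Q(z,q)=\sum_{j\in\Z} q^{j(3j+1)/2}\big(z^{3j}-z^{-3j-1}\big),
\]
and to regroup the resulting double sum $\sum_{j,l}$ according to an index change adapted to the Gaussian factorization $p=m^2+n^2=(m+in)(m-in)$. With $z_1=q^{bm}$, $z_2=q^{bn}$ and base $q^p$, the exponent of $q$ in a generic term is
\[
\tfrac{p}{2}\big(j(3j+1)+l(3l+1)\big)+3b(mj+nl)
\]
(plus sign variants), and the quadratic form $j^2+l^2$ is the one to diagonalize. We would set
\[
j=mu-nv+\text{(shift)},\qquad l=nu+mv+\text{(shift)},
\]
or rather, since that map has index $p$ in $\Z^2$, parametrize $(j,l)$ by a residue class $k\in\{0,\dots,p-1\}$ together with a pair $(u,v)\in\Z^2$:
\[
(j,l)=(k,0)+(mu-nv,\;nu+mv).
\]
This works because $m u-nv\equiv k$ running over the residues, combined with the lattice $\{(mu-nv,nu+mv)\}$, covers $\Z^2$ exactly once. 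The inverse image of the lattice has index $p$, and $(k,0)$ runs through the coset representatives because $m$ is invertible mod $p$.

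In the case $p\equiv 5\pmod{12}$, $3\nmid mn$, so the linear terms $\tfrac{p}{2}(j+l)$ do not align with the lattice as cleanly as in Theorem~\ref{t1}. This is the source of the thirds-of-exponents and of the correction $nsp^2$ involving $\mu=\bar{3m}$.

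The key steps are as follows.

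\textbf{Step 1.} Substitute and compute the exponent. After completing squares, it becomes
\[
\chi_k+\tfrac{p^2}{2}\cdot 3(u^2+v^2)+(\text{linear in }u,v),
\]
with $\chi_k$ independent of $(u,v)$. The quadratic part $\tfrac{3p}{2}(j^2+l^2)$ turns into $\tfrac{3p^2}{2}(u^2+v^2)$, and the linear coefficients are read off as $\alpha_k/3$ and $\beta_k/3$ up to the $nsp^2$ adjustment.

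\textbf{Step 2.} Handle the four sign-combinations coming from $z^{3j}-z^{-3j-1}$. These are the reflections $j\mapsto -j-\tfrac13$ and $l\mapsto -l-\tfrac13$ in disguise. Under the rotation by $m+in$ they do not preserve the lattice individually, which is exactly why one gets Winquist's four-term structure rather than a product of two quintuple products. We would show that the $4$ pieces of each $k$-class combine into a two-variable sum with a sign character and the symmetry group generated by $(u,v)\mapsto(v,u)$ and sign changes of $u,v$, i.e. the Weyl group of type $B_2$/$C_2$.

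\textbf{Step 3.} Identify that sum with $W(a,b,q^{p^2})$ by invoking Winquist's identity in its series form,
\[
\<a,b,ab,a/b;q\>_\infty (q;q)_\infty^{-2}=\sum_{u,v}(\ldots),
\]
the classical double sum of $B_2$ Macdonald type. We would then match $a=-q^{\alpha_k/3}$ and $b=-q^{(\beta_k-nsp^2)/3}$. The hypothesis $m\equiv -n\pmod 3$ is used to guarantee that $\alpha_k/3$, $(\beta_k-nsp^2)/3$ and $\chi_k$ are integers for the chosen residue representatives. The term $s=(3m\mu-1)/p$ appears when we shift $k$ by a multiple of $\mu$ to make the linear term divisible by $3$.

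The main obstacle is Step~2: arranging the reflection terms so that, after the lattice change, they assemble into precisely the Winquist double series with the correct signs and with integral exponents. This requires careful choice of the residue representative $k$ (via $\mu$) and tracking of the $-1$ shifts. We would first verify the bookkeeping numerically for $p=5$, with $(m,n)=(1,-2)$ or $(2,-1)$, to fix the shifts, then do the general algebra.
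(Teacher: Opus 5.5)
Your reindexing of $\Z^2$ by the sublattice $(m+in)\Z[i]$ is the right engine. It is exactly what Lemma~\ref{lwt1} of Liu--Yang does to each product of two triple products in the paper's proof. The gap is in how Steps~1--2 organize the $p$-dissection.

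With $Q$ written as $\sum_j q^{j(3j+1)/2}(z^{3j}-z^{-3j-1})$, the four sign variants do not share the linear form $mj+nl$. For the variant $-z_1^{3j}z_2^{-3l-1}$ the $q$-exponent is $\equiv 3b(mj-nl)-bn \pmod{p}$. On your coset $(k,0)+\{(mu-nv,nu+mv)\}$ one has $mj-nl=mk+(m^2-n^2)u-2mnv$, and $m^2-n^2\equiv 2m^2\not\equiv 0 \pmod{p}$. So for two of the four variants a $k$-class is spread over all residues mod $p$, and Step~1 does not produce a piece of the dissection.

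Fixing the orientation does not rescue the plan. Suppose you write $Q(z,q)=\langle -qz^3;q^3\rangle_\infty - z\langle -q^2z^3;q^3\rangle_\infty$, so every family enters through $mx+ny$. The extra monomials $1,z_2,z_1,z_1z_2$ still mean that a fixed residue $r\equiv 3b\xk m$ is fed by \emph{different} cosets in the four families, namely $\xk$, $\xk-n\mu$, $\xk-m\mu$, $\xk-(m+n)\mu$ with $3m\mu\equiv 1\pmod{p}$ (Lemma~\ref{lem:rComponent}). Thus ``the four pieces of each $k$-class'' never combine. Re-centering those four cosets at $\xk$, via $3m\mu=1+ps$, is what produces the theta-argument offsets $\pm msp^2$, $\pm nsp^2$ and the $s$-dependence of $\xc_k$. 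It is not a shift ``to make the linear term divisible by $3$''. A minor point: your series equals $Q(z^{-1},q)=-z^{-1}Q(z,q)$ in the paper's normalization, so all your exponents are off by $b(m+n)$.

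The decisive step is then only asserted in Step~3, and you flag it yourself as the main obstacle. What has to be proved is that the four re-centered pieces equal $q^{\xc_\xk}W(-q^{A/3},-q^{B/3};q^{p^2})$ with $A=\xa_\xk$ and $B=\xb_\xk-nsp^2$. The natural target is Winquist's four-term product form \eqref{wineq}, i.e.\ \eqref{eq:WinqDemo}, because the $r$-component arrives as four products of two theta functions in base $q^{y}$, $y=3p^2$. A Weyl-symmetrized $B_2$ double series would need a further regrouping that you have not described.

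The mechanism is arithmetic modulo $3$. Since $s\equiv 1\pmod{3}$ and $m\equiv -n\pmod{3}$, in the case $m\equiv 2$, $n\equiv 1$ one writes $ms=2+3M$, $ns=1+3N$ and $(m+n)s=3K$. Then \eqref{eq:jtpShift} removes the offset $(m+n)sp^2=Ky$ entirely. It also reduces the remaining offsets $\pm nsp^2,\pm msp^2$ to $p^2$ or $2p^2$ modulo $y$. This gives exactly the argument pattern $(A,B+p^2)$, $(A,B+2p^2)$, $(A+p^2,B)$, $(A+2p^2,B)$ of \eqref{eq:WinqDemo}. One must then check that the accumulated prefactors are $q^{\xc}$, $q^{\xc+B/3}$, $-q^{\xc+(A-B)/3}$ and $-q^{\xc+(2A-B)/3}$, with $\xc=\xg_\xk-N(B+p^2)-\xs_N$ simplifying to the stated $\xc_k$.

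This also corrects your heuristic for why Winquist appears. The reflections behave identically when $p\equiv 1\pmod{12}$, where the answer is a product of two quintuple products. The real difference is that there $3\mid n$ makes $nsp^2=Ny$ removable, so the $r$-component factors; here only $(m+n)sp^2$ is a multiple of $y$. Without this computation your proposal is a plan whose central identity is missing.
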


The $p$-dissections of Theorems \ref{t1} and \ref{t2} (and more generally Theorems \ref{Qdissect1} and \ref{thm:QuintDiss(5mod12)}) allow us to derive explicit  sign patterns for the series coefficients of the quotients
    \( \label{w1} 
        \frac{Q(q^{bm}, q^{p}) Q(q^{bn},q^{p})}{(q^{p}; q^{p})_{\infty}^{2}}.
    \)
Namely, Corollaries \ref{Qdissectanvanceqsign1} and \ref{Qdissectanvanceqsign2} show that, after finitely many initial zero terms, the signs of the series coefficients of \eqref{w1} follow explicit, predictable patterns; we demonstrate some examples following said corollaries. Finally, the form of the dissection components allow us to show that certain series coefficients vanish modulo $2$.

To illustrate some applications of our results, we mention a few examples here. Example \ref{p13vancex} shows that, for $p=13$ and $a_t$ defined via $Q(q^{10},q^{13})Q(q^{15},q^{13}) = \sum_{t} a_t q^t$, one has $a_{13t+6}=a_{13t+9}=0$ for all $t$. 
Example \ref{p13signpat} shows that (apart from some initial zero coefficients) for 
$$
\frac{Q(q^{10},q^{13})Q(q^{15},q^{13})}{(q^{13};q^{13})_{\infty}^{2}}=\sum_{t=0}^{\infty}b_{t}q^{t},
$$
that the signs of the sequence of coefficients $b_t$ is periodic modulo 13, and states this sign pattern explicitly. 
Examples \ref{p=17vanex} and~\ref{p17signpat} exhibit similar results for series coefficients of the products $Q(q^2,q^{17})Q(q^8,q^{17})$ and $Q(q^2,q^{17})Q(q^8,q^{17})/(q^{17};q^{17})_{\infty}^{2}$, respectively.
Example  \ref{p=17mod2ex} shows for $Q(q^2,q^{17})Q(q^8,q^{17})$ that $a_{17t+5} \equiv a_{17t+10} \equiv 0 \pmod{2}$. 
Example \ref{p13ex} explicitly computes the~13-dissection of $Q(q^2,q^{13})Q(q^3,q^{13})$, and hence derives several partition theoretic results from the 13-dissection of $Q(q^2,q^{13})Q(q^3,q^{13})/(q^{13};q^{13})_{\infty}^2$. 
Example \ref{p17ex}  similarly demonstrates the 17-dissection of $Q(q^2,q^{17})Q(q^8,q^{17})$, and likewise derives some partition theoretic consequences.
\vspace{\baselineskip}

\noindent\textbf{The structure of this paper.} First, section~\ref{sec:prelims} records notation and preliminary lemmas, including the including the dissection identities for the triple product. Section \ref{sec:1(mod12)} proves the main $p$-dissection formulae for the case $p \equiv 1 \pmod{12}$ and derives vanishing and sign consequences. Section \ref{sec:5(mod12)} treats 
the case $p \equiv 5 \pmod{12}$ via Winquist’s identity. Section \ref{sec:combinatorial} gives combinatorial interpretations for some cases, and Section \ref{sec:conclusions} gives concluding remarks and some open problems.
\vspace{\baselineskip}

\noindent\textbf{A computational aid.} A number of our derivations involve cumbersome, tedious symbolic manipulations. To help keep these cumbersome formulae free of ``typos'', and to aid the curious reader, a \emph{Mathematica} notebook further detailing and implementing our computations is freely, publicly available on the first author's website\footnote{\texttt{math.purdue.edu/{\midtilde}daniel84/}}, and upon request. We remark, however, that neither use of- nor proficiency with- \emph{Mathematica} is essential in any part of this paper's results.

%%%%%%%%%%%%%%%%%%%%%%%%%%%%%%%%%%%%%%%%%%
\section{Preliminaries} \label{sec:prelims}
%%%%%%%%%%%%%%%%%%%%%%%%%%%%%%%%%%%%%%%%%%

Throughout this paper $p$ denotes a prime satisfying $p \equiv 1 \mod{4}$, and $m$ and $n$ are integers satisfying $p = m^{2} + n^{2}$, unless stated otherwise. Overbarred quantities, such as $\bar{m}$, indicate multiplicative inverses modulo $p$, and expressions $\fr{a}{b} \mod{p}$ with $(b,p)=1$ are understood as $a\bar{b} \mod{p}$. Except for the complex quantities $q$ and $z$, lowercase Latin letters such as $a$, $k$, $t$, etc.~always denote integers. Statements that hold ``for all $t > t_{0}$'' (or similar) are understood to mean ``there is some constant $t_{0}$ for which [the statement] holds whenever $t > t_{0}$''.

Expressions $\<z;q\>_{\infty}$ indicate the \emph{triple product}
    \[
        \<z;q\>_{\infty} = (z,q/z,q;q)_{\infty}, \quad\text{and we let}\quad 
        \<z_{1},\ldots,z_{k};q\>_{\infty} := \<z_{1};q\>_{\infty}\cdots\<z_{k};q\>_{\infty}.
    \]
We recall the Jacobi triple product identity
    \(
        \sum_{k=-\infty}^{\infty} (-z)^{k}q^{k(k-1)/2} = \<z;q\>_{\infty} = (z,q/z,q;q)_{\infty} \qquad 
        \text{for $|q|<1$ and $z \neq 0$},
    \)
and recall several equivalent versions of the quintuple product identity, namely 
    \begin{multline}
    \label{qpideq}
        Q(z,q) = \sum_{k=-\infty}^{\infty} q^{k(3k-1)/2}z^{3k}(1-zq^k)
        = \<z;q\>_{\infty}(qz^2,q/z^2;q^2)_{\infty} \\
        = \<-qz^3;q^3\>_{\infty} - z\<-q^2z^3;q^3\>_{\infty},
    \end{multline}
again for $|q|<1$ and $z \neq 0$. For any $q$-series $\sum_{k \in \Z} a_{k}q^{k}$, the \emph{$r \mod{p}$ component} (or ``\emph{$r$-component}'', if $p$ is understood) of the series is $\sum_{k \in \Z} a_{kp+r} q^{kp+r}$.

For integers $t$, $x$, and $y$, with $y > 0$, it is elementary to verify the equalities
    \[
        (\pm q^{x+ty},\pm q^{y-x-ty};q^{y})_{\infty} 
        = (\mp 1)^{t} q^{-tx-\fr{t(t-1)y}{2}} (\pm q^{x},\pm q^{y-x};q^{y})_{\infty}.
    \]
Using this with the formulae $\<z;q\>_{\infty} = (z,q/z,q;q)_{\infty}$ and
    \[
        Q(z,q) = (z,q/z,q;q)_{\infty}(z^{2}q,q/z^{2};q^{2})_{\infty},
    \]
we easily deduce the following useful relations, which we record in a lemma for emphasis.

\begin{lemma}
    \label{lem:Shifts}
    For integers $t$, $x$, and $y$ with $y > 0$, one has
        \begin{subequations}
        \begin{align}
        \label{eq:jtpShift}
            \<\pm q^{x+ty};q^{y}\>_{\infty} &= (\mp 1)^{t} q^{-tx-\fr{t(t-1)y}{2}}\<\pm q^{x};q^{y}\>_{\infty}, \\
        \label{eq:QQShift}
            Q(q^{x+ty},q^{y}) &= q^{-3tx-\fr{t(3t-1)y}{2}}Q(q^{x},q^{y}).
        \end{align}
        \end{subequations}
\end{lemma}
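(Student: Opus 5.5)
The plan is to start from the elementary shift identity recorded just before the lemma,
\[
    (\pm q^{x+ty},\pm q^{y-x-ty};q^{y})_{\infty} = (\mp 1)^{t} q^{-tx-\fr{t(t-1)y}{2}} (\pm q^{x},\pm q^{y-x};q^{y})_{\infty}.
\]
Let me sketch how this itself is checked. For $t\ge 0$, passing from $x$ to $x+ty$ removes the factors $1\pm q^{x},\dots,1\pm q^{x+(t-1)y}$ from the first product. It adds to the second product the factors $1\pm q^{y-x-ty},\dots,1\pm q^{-x}$. Rewriting $1\pm q^{-x-jy} = \pm q^{-x-jy}(1\pm q^{x+jy})$ cancels each added factor against a removed one. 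The leftover powers are $(\pm1)^{t}q^{-tx-yt(t-1)/2}$. The case $t<0$ follows by applying the $t>0$ case with $x$ replaced by $x+ty$. Multiplying by $(q^{y};q^{y})_{\infty}$, which does not depend on $x$, gives \eqref{eq:jtpShift} directly.

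For \eqref{eq:QQShift} I would take $z=q^{x}$ and base $q^{y}$, so that $Q(q^{x},q^{y})=\<q^{x};q^{y}\>_{\infty}\,(q^{y+2x},q^{y-2x};q^{2y})_{\infty}$. By \eqref{eq:jtpShift} with the minus sign, the triple-product factor under $x\mapsto x+ty$ picks up $(-1)^{t}q^{-tx-t(t-1)y/2}$. For the second factor I apply the same shift identity with base $Y=2y$, exponent $X=y+2x$ and shift $t$, minus sign. Since $X+tY = y+2(x+ty)$ and $Y-X-tY = y-2(x+ty)$, this factor picks up
\[
    (-1)^{t}q^{-t(y+2x)-t(t-1)y}.
\]

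Multiplying the two factors, the signs give $(-1)^{2t}=1$. The exponent is
\[
    -tx - \tfrac{t(t-1)y}{2} - ty - 2tx - t(t-1)y = -3tx - \tfrac{t(t-1)y}{2} - t^{2}y,
\]
and
\[
    \tfrac{t(t-1)}{2} + t^{2} = \tfrac{3t^{2}-t}{2} = \tfrac{t(3t-1)}{2},
\]
which matches the stated $-3tx-\tfrac{t(3t-1)y}{2}$.

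Alternatively, one can check \eqref{eq:QQShift} from the series form in \eqref{qpideq}. Reindexing $k\mapsto k-t$ in $\sum_k q^{yk(3k-1)/2}q^{3kx}(1-q^{x+ky})$ after substituting $x+ty$ gives the same factor.

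There is no real obstacle here. The only care needed is tracking signs and the exponent in the $q^{2y}$ factor, and handling negative $t$, which follows by symmetry.
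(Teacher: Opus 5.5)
Your proof is correct and follows the paper's route. The paper also derives both relations from the two-factor shift identity for $(\pm q^{x+ty},\pm q^{y-x-ty};q^{y})_{\infty}$: once with base $q^{y}$ for the triple product, and once with base $q^{2y}$ and exponent $y+2x$ for the factor $(q^{y+2x},q^{y-2x};q^{2y})_{\infty}$; your exponent bookkeeping $\tfrac{t(t-1)}{2}+t^{2}=\tfrac{t(3t-1)}{2}$ checks out. One notational slip: the factors of $(\pm q^{x};q^{y})_{\infty}$ are $1\mp q^{x+jy}$, not $1\pm q^{x+jy}$, so in your sketch of the identity the leftover sign should read $(\mp 1)^{t}$, matching the statement. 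Your later use of $(-1)^{t}$ for the $+$ case is correct.
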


\begin{corollary}
    \label{cor:<qx,qy>=0}
    If $y > 0$ and $x \equiv 0 \mod{y}$, then $\<q^{x};q^{y}\>_{\infty} = Q(q^{x},q^{y}) = 0$.
\end{corollary}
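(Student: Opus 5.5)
The plan is to reduce to the case $x = 0$ via Lemma \ref{lem:Shifts} and then read off a vanishing factor from the product definitions. Since $x \equiv 0 \mod{y}$, write $x = ty$ with $t \in \Z$. Applying \eqref{eq:jtpShift} with the base exponent $0$ in place of $x$ gives
\[
\<q^{ty};q^{y}\>_{\infty} = (-1)^{t} q^{-\fr{t(t-1)y}{2}}\<1;q^{y}\>_{\infty}.
\]
Similarly, \eqref{eq:QQShift} gives
\[
Q(q^{ty},q^{y}) = q^{-\fr{t(3t-1)y}{2}}Q(1,q^{y}).
\]
So it suffices to show that $\<1;q^{y}\>_{\infty}$ and $Q(1,q^{y})$ both vanish.

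For the triple product, use the definition $\<z;q\>_{\infty} = (z,q/z,q;q)_{\infty}$. At $z=1$ the factor $(1;q^{y})_{\infty}$ contains the factor $(1-1)=0$. All the remaining factors are finite because $|q|<1$ and $y>0$, so $\<1;q^{y}\>_{\infty}=0$. For the quintuple product, $Q(z,q) = \<z;q\>_{\infty}(qz^2,q/z^2;q^2)_{\infty}$. Its second factor is finite and nonzero-or-whatever at $z=1$, since it equals $(q^{y};q^{2y})_{\infty}^2$, which converges. Hence $Q(1,q^{y}) = 0$ as well.

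The only point needing care is that Lemma \ref{lem:Shifts} is stated for integer $x$, and here we use it with base $x=0$. That is legitimate because it is an identity of products valid for every integer exponent. Alternatively, one can bypass the lemma entirely: $(q^{ty};q^{y})_{\infty}$ vanishes when $t \le 0$, and $(q^{y}/q^{ty};q^{y})_{\infty} = (q^{(1-t)y};q^{y})_{\infty}$ vanishes when $t \ge 1$. In either case one factor $(1-q^{0})$ appears, which gives the result directly. I do not expect any step to be an obstacle; the argument is immediate.
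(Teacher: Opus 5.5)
Your proposal is correct and is essentially the paper's own proof. You write $x=ty$, use Lemma~\ref{lem:Shifts} to reduce to $\langle 1;q^{y}\rangle_{\infty}$ and $Q(1,q^{y})$, and observe that $(1;q^{y})_{\infty}=0$ while the remaining factors converge. Your direct alternative, which finds the factor $1-q^{0}$ in $(q^{ty};q^{y})_{\infty}$ or in $(q^{(1-t)y};q^{y})_{\infty}$ depending on the sign of $t$, is also valid and avoids any worry about applying the shift relation at a zero of the product.
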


\begin{proof}
    Write $x=ty$, and apply Lemma \ref{lem:Shifts} and the fact that \mbox{$(1;q^{y})_{\infty} = 0$}.
\end{proof}

The following is a special case of a result due to Liu and Yang.

\begin{lemma}[{\cite[Thm.~2]{LY09}}]\label{lwt1}
Let $p = m^{2}+n^{2}$ with $m$ and $n$ coprime. For $|q|<1$, one has
    \(\label{lwt1eq1}
        \<-uq;q^2\>_{\infty} \<-vq;q^2\>_{\infty} 
        = \sum_{k=0}^{p-1} q^{k^2}u^{k} \<-u^{m}v^{n}q^{2km+p};q^{2p}\>_{\infty} \<-u^{n}v^{-m}q^{2kn+p};q^{2p}\>_{\infty}.
    \)
\end{lemma}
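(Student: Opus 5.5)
The approach is to expand both sides with the Jacobi triple product and show they agree term by term. On the left, $\<-uq;q^2\>_{\infty} = \sum_{r\in\Z} u^{r} q^{r^2}$ (take $z=-uq$, base $q^2$). Likewise $\<-vq;q^2\>_{\infty}=\sum_{s} v^{s}q^{s^2}$. The left side is therefore the double sum
\[
\sum_{(r,s)\in\Z^2} u^{r}v^{s}q^{r^2+s^2}.
\]

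On the right, expand $\<-u^{m}v^{n}q^{2km+p};q^{2p}\>_{\infty}=\sum_{i} u^{mi}v^{ni}q^{2kmi+pi^2}$, and similarly $\<-u^{n}v^{-m}q^{2kn+p};q^{2p}\>_{\infty}=\sum_j u^{nj}v^{-mj}q^{2knj+pj^2}$. Multiplying by $q^{k^2}u^k$, the $(k,i,j)$ term is
\[
u^{k+mi+nj}\,v^{ni-mj}\,q^{k^2+2kmi+2knj+p(i^2+j^2)} .
\]

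The key step is the change of variables $r=k+mi+nj$, $s=ni-mj$. Using $p=m^2+n^2$,
\[
r^2+s^2=k^2+2k(mi+nj)+(mi+nj)^2+(ni-mj)^2=k^2+2kmi+2knj+p(i^2+j^2).
\]
So each $(k,i,j)$ term is exactly $u^rv^sq^{r^2+s^2}$, and it remains to show that
\[
(k,i,j)\mapsto (r,s)=(k+mi+nj,\;ni-mj), \qquad 0\le k<p,\ (i,j)\in\Z^2,
\]
is a bijection onto $\Z^2$.

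This bijectivity is the only real obstacle. The map $(i,j)\mapsto(mi+nj,\,ni-mj)$ has matrix of determinant $-m^2-n^2=-p$. Its image is therefore a sublattice $L$ of index $p$, and the map onto $L$ is injective. The lattice $L$ contains $(m,n)$ and $(n,-m)$, and every $(x,y)\in L$ satisfies $nx\equiv my \pmod p$: indeed $n(mi+nj)-m(ni-mj)=pj$. Since the index is $p$, $L$ is exactly $\{(x,y): nx\equiv my \pmod p\}$; here $p\nmid m$ because $\gcd(m,n)=1$.

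The cosets of $L$ are then detected by $nx-my \bmod p$. Shifting by $(k,0)$ changes this quantity by $nk$, and $n$ is invertible mod $p$. Hence the vectors $(k,0)$ with $0\le k<p$ form a complete set of coset representatives. Given $(r,s)$, the residue $k$ is determined uniquely by $nk\equiv nr-ms\pmod p$, and then $(i,j)$ is determined uniquely by $(r-k,s)\in L$. This proves the bijection, and reindexing the absolutely convergent sum (valid for $|q|<1$, $u,v\neq0$) gives the identity.
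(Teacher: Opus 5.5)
Your proposal is correct, and it takes a different route from the paper simply because the paper gives no proof of this lemma at all: it is quoted as a special case of Liu and Yang's Theorem~2.

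You give a self-contained series-rearrangement proof. The triple-product expansions agree with the paper's normalization $\langle z;q\rangle_\infty=(z,q/z,q;q)_\infty$. The identity $r^2+s^2=k^2+2k(mi+nj)+p(i^2+j^2)$ is right. The coset argument shows that the $k$-th summand on the right is exactly $\sum_{(r,s)\in(k,0)+L}u^rv^sq^{r^2+s^2}$ with $L=\{(x,y):nx\equiv my\pmod p\}$, and absolute convergence justifies the reindexing.

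One justification should be tightened. $L$ has index exactly $p$ because $(x,y)\mapsto nx-my$ maps onto $\Z/p\Z$, which follows from $\gcd(m,n)=1$ by B\'ezout. The vectors $(k,0)$, $0\le k<p$, represent all cosets because $\gcd(n,p)=1$: a prime dividing both $n$ and $m^2+n^2$ must divide $m$. Your stated reason, ``$p\nmid m$'', only does this job when $p$ is prime. With that fix, your proof covers the lemma as literally stated, for any $p=m^2+n^2$ with $m,n$ coprime, not just the paper's standing prime case.

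Compared with the citation, your route costs a few lines but makes the identity transparent, and it gives a free consequence. Since $(p,0)\in L$, the $k$-th summand depends only on $k \bmod p$. After specialization, this is exactly the invariance of Corollary~\ref{k+p}, which the paper verifies by a direct exponent computation. The citation, in exchange, is shorter and points to a more general theorem.
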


We aim to use Lemma \ref{lwt1} to rewrite the products of the form
    \[
        \<-q^{p\pm 3bm};q^{3p}\>_{\infty} \<-q^{p\pm 3bn};q^{3p}\>_{\infty}
    \]
that appear in the expansion of
    \(
    \label{qpideqa}
        \begin{aligned}
            & Q(q^{bm},q^p)Q(q^{bn},q^p) \\ 
            &\quad = \Big[ \<-q^{p+3bm};q^{3p}\>_{\infty} - q^{bm}\<-q^{p-3bm};q^{3p}\>_{\infty}\Big]
            \Big[\<-q^{p+3bn};q^{3p}\>_{\infty} - q^{bn}\<-q^{p-3bn};q^{3p}\>_{\infty} \Big].
        \end{aligned}
    \)
In particular, if $p=m^{2}+n^{2}$, then replacing 
    \[
        q \to q^{\fr{3p}{2}}, \quad u \to q^{s}, \quad\text{and}\quad v \to q^{t},
    \]
equation \eqref{lwt1eq1} states that
    \(
    \label{eq:LY-Simplified}
    \begin{aligned}
        &\<-q^{s+\fr{3p}{2}};q^{3p}\>_{\infty} \<-q^{t+\fr{3p}{2}};q^{3p}\>_{\infty} \\
        &\qquad = \sum_{k=0}^{p-1} q^{sk+\fr{3k^2p}{2}} \<-q^{3kmp + (ms+nt) + \fr{3p^2}{2}};q^{3p^2}\>_{\infty}
            \<-q^{3knp + (ns-mt) + \fr{3p^2}{2}};q^{3p^2}\>_{\infty}.
    \end{aligned}
    \)
Using the pair
    \[
        (s_0, t_0) = \left(3bm-\fr{p}{2}, 3bn-\fr{p}{2}\right)
    \]
for $(s,t)$ in \eqref{eq:LY-Simplified} evidently produces $\<-q^{p+3bm};q^{3p}\>_{\infty}\<-q^{p+3bn};q^{3p}\>_{\infty}$ on the left-hand side there. For the product $\<-q^{p+3bm};q^{3p}\>_{\infty}\<-q^{p-3bn};q^{3p}\>_{\infty}$, we use the pair $(s_0,t_{0}+p)$; indeed, in this case we have
    \[
        \<-q^{p-3bn};q^{3p}\>_{\infty}
        = \<-q^{3bn+2p};q^{3p}\>_{\infty}
        = \<-q^{t_0+p+\fr{3p}{2}};q^{3p}\>_{\infty}.
    \]
Continuing along this line, in total we find that the pairs
    \[
        (s_{0},t_{0}), \quad (s_{0}+p,t_{0}), \quad (s_{0},t_{0}+p), \quad\text{and}\quad (s_{0}+p,t_{0}+p)
    \]
produce the products
    \begin{alignat*}{2}
    &   \<-q^{3bm+p},-q^{3bn+p};q^{3p}\>_{\infty}, \qquad
        &&\<-q^{3bm+2p},-q^{3bn+p};q^{3p}\>_{\infty}, \\
    &   \<-q^{3bm+p},-q^{3bn+2p};q^{3p}\>_{\infty}, \quad\text{and}\quad
        &&\<-q^{3bm+2p},-q^{3bn+2p};q^{3p}\>_{\infty},
    \end{alignat*}
respectively.

Throughout the remainder of this paper we frequently use the following definitions.

\begin{definition}
\label{def:y-mu-s}
For fixed $p \equiv 1 \mod{4}$ with $p = m^{2}+n^{2}$, let
    \begin{subequations}
    \[
        y := 3p^{2},
    \]
let $\mu$ be a fixed solution to
    \[
        3m\mu \equiv 1 \pmod{p},
    \]
and let
    \[
        s := (3m\mu-1)/p.
    \]
    \end{subequations}
\end{definition}

\begin{definition}
    For fixed $p \equiv 1 \mod{4}$ with $p = m^{2}+n^{2}$ and $b > 0$, let
    \begin{subequations}
        \begin{align}
        \label{eq:a(k)}
            \xa_{k} &:= 
            3bp + 3kmp - \tfr{(m+n)p}{2} + \tfr{3p^2}{2}, \\
        \label{eq:b(k)}
            \xb_{k} &:= 
            3knp + \tfr{(m-n)p}{2} + \tfr{3p^2}{2}, \\
        \label{eq:g(k)}
            \xg_{k} &:= 3bkm + \tf{k(3k-1)p}{2}.
        \end{align}
    \end{subequations}
\end{definition}

The following is an immediate consequence of Lemma \ref{lwt1}.

\begin{corollary}\label{expancor}
The following expansions hold:
    \begin{subequations}
    \begin{align}
    \label{qtppexpansca}
        & \<-q^{3bm+p},-q^{3bn+p};q^{3p}\>_{\infty} 
        =   \sum_{k=0}^{p-1} q^{\xg_{k}} \<-q^{\xa_{k}},-q^{\xb_{k}};q^{y}\>_{\infty}, \\
    %==========
    \label{qtppexpanscb}
        &   \<-q^{3bm+p},-q^{3bn+2p};q^{3p}\>_{\infty}
        =   \sum_{k=0}^{p-1}q^{\xg_{k}} \<-q^{\xa_{k}+np},-q^{\xb_{k}-mp};q^{y}\>_{\infty}, \\
    %==========
    \label{qtppexpanscc}
    &   \<-q^{3bm+2p},-q^{3bn+p};q^{3p}\>_{\infty}
    =   \sum_{k=0}^{p-1}q^{\xg_{k}+pk} \<-q^{\xa_{k}+mp},-q^{\xb_{k}+np};q^{y}\>_{\infty} \\
    %==========
    \label{qtppexpanscd}
    &   \<-q^{3bm+2p},-q^{3bn+2p};q^{3p}\>_{\infty}
    =   \sum_{k=0}^{p-1}q^{\xg_{k}+pk} \<-q^{\xa_{k}+(m+n)p},-q^{\xb_{k}-(m-n)p};q^{y}\>_{\infty}.
    \end{align}
    \end{subequations}
\end{corollary}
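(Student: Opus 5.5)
The plan is to specialize \eqref{eq:LY-Simplified} to each of the four pairs $(s_0,t_0)$, $(s_0+p,t_0)$, $(s_0,t_0+p)$, $(s_0+p,t_0+p)$. As already observed, these pairs produce exactly the four left-hand sides of \eqref{qtppexpansca}--\eqref{qtppexpanscd}. All that remains is to compute the three exponents on the right of \eqref{eq:LY-Simplified} and compare them with $\xa_k$, $\xb_k$, $\xg_k$.

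First take $(s,t)=(s_0,t_0)=(3bm-\fr p2,3bn-\fr p2)$. Then
\[
ms+nt = 3b(m^2+n^2)-\tfr{(m+n)p}{2} = 3bp-\tfr{(m+n)p}{2},
\]
so the first exponent is $3kmp+3bp-\fr{(m+n)p}{2}+\fr{3p^2}{2}=\xa_k$. Next,
\[
ns-mt = 3bnm-\tfr{np}{2}-3bmn+\tfr{mp}{2}=\tfr{(m-n)p}{2},
\]
so the second exponent is $\xb_k$. The prefactor exponent is $s_0k+\fr{3k^2p}{2}=3bkm+\fr{k(3k-1)p}{2}=\xg_k$. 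This gives \eqref{qtppexpansca}.

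For the other pairs, replacing $s$ by $s+p$ changes $ms+nt$ by $mp$, changes $ns-mt$ by $np$, and multiplies the prefactor by $q^{pk}$. Replacing $t$ by $t+p$ changes $ms+nt$ by $np$ and $ns-mt$ by $-mp$, and leaves the prefactor unchanged. Reading off these shifts gives \eqref{qtppexpanscb}, \eqref{qtppexpanscc} and \eqref{qtppexpanscd} directly, including the combined shifts $(m+n)p$ and $(n-m)p=-(m-n)p$ in the last case.

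The only point needing care is the hypothesis of Lemma \ref{lwt1} that $m$ and $n$ be coprime. This holds because $p=m^2+n^2$ is prime. A second point is that $s_0,t_0$ may be half-integers when $p$ is odd. The substitution $q\to q^{3p/2}$, $u\to q^{s}$, $v\to q^{t}$ is nonetheless a formal identity in $q^{1/2}$, and every resulting exponent is an integer: $\xa_k,\xb_k$ are integers since $m+n$ is odd and $p$ is odd, and $\xg_k$ is clearly an integer. So no actual obstacle arises. The main bookkeeping is confirming that $u^m v^n$ and $u^n v^{-m}$ produce exactly $ms+nt$ and $ns-mt$, which the simplified form \eqref{eq:LY-Simplified} already records.
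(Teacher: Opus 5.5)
Your proposal is correct and follows the paper's route exactly: the paper calls the corollary an immediate consequence of Lemma \ref{lwt1}, specialized through \eqref{eq:LY-Simplified} to the four pairs $(s_0,t_0)$, $(s_0+p,t_0)$, $(s_0,t_0+p)$, $(s_0+p,t_0+p)$. Your exponent computations check out. The coprimality and integrality remarks you add are points the paper leaves implicit.
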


\begin{definition}
    For all integers $t$, let
    \[
        \xs_{t} = \fr{t(t-1)y}{2} \qquad (y = 3p^2).
    \]
\end{definition}

With the above definition, equation \eqref{eq:jtpShift} states that
    \[
        \<\pm q^{x+ty};q^{y}\>_{\infty} 
        = (\mp 1)^{t} q^{-tx-\xs_{t}} \<\pm q^{x};q^{y}\>_{\infty}.
    \]
Moreover, it is helpful to note that $\xs_{-t} = ty+\xs_{t}$, so that
    \[
        \<\pm q^{x-ty};q^{y}\>_{\infty} 
        = (\mp 1)^{t} q^{t(x-y)-\xs_{t}} \<\pm q^{x}; q^{y}\>_{\infty}.
    \]

\begin{corollary}
\label{k+p}
    The summands on the right-hand sides of equations \eqref{qtppexpansca}--\eqref{qtppexpanscd} are all invariant under the transformations $k \to k \pm p$.
\end{corollary}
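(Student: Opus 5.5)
We need to show that replacing $k$ by $k+p$ leaves each summand unchanged; the case $k-p$ then follows by substituting $k-p$ for $k$. The plan is a direct computation with the shift formula \eqref{eq:jtpShift} of Lemma \ref{lem:Shifts}, taken with modulus $y=3p^2$.

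First record how the parameters move under $k\to k+p$. From \eqref{eq:a(k)} and \eqref{eq:b(k)},
\[
\xa_{k+p}=\xa_k+3mp^2=\xa_k+my, \qquad \xb_{k+p}=\xb_k+3np^2=\xb_k+ny .
\]
From \eqref{eq:g(k)},
\[
\xg_{k+p}-\xg_k=3bmp+\tfr{p}{2}\big[(k+p)(3k+3p-1)-k(3k-1)\big]=3bmp+3kp^2+\tfr{p^2(3p-1)}{2}.
\]
The summands of \eqref{qtppexpanscc} and \eqref{qtppexpanscd} carry an extra factor $q^{pk}$, which contributes a further $p^2$ to the change in exponent. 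In every one of the four expansions, the two triple-product arguments have the form $\xa_k+c_1p$ and $\xb_k+c_2p$ with $c_1,c_2$ independent of $k$. Hence under $k\to k+p$ they shift by exactly $my$ and $ny$.

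Next apply \eqref{eq:jtpShift} with $t=m$, $x=\xa_k+c_1p$, and with $t=n$, $x=\xb_k+c_2p$, using the minus-sign version (arguments $-q^{x}$). The product of the two triple products gets multiplied by
\[
q^{-m(\xa_k+c_1p)-\xs_m-n(\xb_k+c_2p)-\xs_n},
\]
with no sign, since $(+1)^t=1$. The summand is therefore invariant exactly when this exponent plus $\xg_{k+p}-\xg_k$ (plus $p^2$ in the last two cases) vanishes. This is where $p=m^2+n^2$ enters.

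For the check in \eqref{qtppexpansca}, the $k$-dependent part of $m\xa_k+n\xb_k$ is $3kp(m^2+n^2)=3kp^2$, which cancels the $3kp^2$ in $\xg_{k+p}-\xg_k$. The $b$-part $3bmp$ cancels as well. What remains is
\[
\tfr{p}{2}\big[-m(m+n)+n(m-n)\big]+\tfr{3p^2}{2}(m+n)=-\tfr{p^2}{2}+\tfr{3p^2(m+n)}{2},
\]
together with
\[
\xs_m+\xs_n=\tfr{3p^2}{2}\big(m^2+n^2-m-n\big)=\tfr{3p^3}{2}-\tfr{3p^2(m+n)}{2}.
\]
Adding these gives $\tfr{3p^3}{2}-\tfr{p^2}{2}$, which equals $\tfr{p^2(3p-1)}{2}$, the constant part of $\xg_{k+p}-\xg_k$. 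So the exponent vanishes.

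For the other three expansions, the added terms are $c_1p=np,\,mp,\,(m+n)p$ and $c_2p=-mp,\,np,\,-(m-n)p$. They contribute $-(mc_1+nc_2)p$ to the exponent, which is $0$ in \eqref{qtppexpanscb} and $-p^2$ in \eqref{qtppexpanscc} and \eqref{qtppexpanscd}. The latter is exactly cancelled by the extra $q^{pk}$ factor.

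The only step needing care is this bookkeeping of the constant terms: $\xs_m+\xs_n$ against $\tfr{p^2(3p-1)}{2}$, and the $c_i$ contributions. Everything there reduces to the identity $m^2+n^2=p$.
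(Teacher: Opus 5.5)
Your proposal is correct and follows the paper's proof: both use $\xa_{k+p}=\xa_k+my$ and $\xb_{k+p}=\xb_k+ny$, apply the shift formula \eqref{eq:jtpShift}, and check that the leftover exponent vanishes because it is a multiple of $p-m^2-n^2$. Your $c_1,c_2$ bookkeeping, where $mc_1+nc_2\in\{0,p\}$ is compensated by the extra factor $q^{pk}$, explicitly handles \eqref{qtppexpanscb}--\eqref{qtppexpanscd}, which the paper only says are ``proved similarly.''
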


\begin{proof}
We demonstrate the invariance for \eqref{qtppexpansca} and the shift $k \to k+p$. 
Changing $k$ to $k+p$, we directly find that
    \begin{align*}
        \xa_{k+p} &= \xa_{k} + 3mp^{2} = \xa_{k} + my, \\
        \xb_{k+p} &= \xb_{k} + 3np^{2} = \xb_{k} + ny,
    \end{align*}
so that
    \begin{align*}
        &\<-q^{\xa_{k+p}},-q^{\xb_{k+p}};q^{y}\>_{\infty}
        = \<-q^{\xa_{k}+my},-q^{\xb_{k}+ny};q^{y}\>_{\infty}
        = q^{-m\xa_{k}-\xs_{m} -n\xb_{k} -\xs_{n}} 
        \<-q^{\xa_{k}},-q^{\xb_{k}};q^{y}\>_{\infty}.
    \end{align*}
Now observing that 
    \[
        \xg_{k+p} = \xg_{k} + \xg_{p} + 3kp^{2},
    \]
we simply compute that
    \(
        \xg_{k+p} - m\xa_{k}-\xs_{m} - n\xb_{k}-\xs_{n}
        = \xg_{k} + \tf{1}{2}p(3p+6k-1)(p-m^{2}-n^{2}) = \xg_{k}, 
    \)
and deduce that
    \[
        q^{\xg_{k+p}}\<-q^{\xa_{k+p}},-q^{\xb_{k+p}};q^{y}\>_{\infty} = q^{\xg_{k}}\<-q^{\xa_{k}},-q^{\xb_{k}};q^{y}\>_{\infty}.
    \]
The invariance of \eqref{qtppexpansca} under $k \to k - p$ and the analogous results for \eqref{qtppexpanscb}--\eqref{qtppexpanscd} are proved similarly.
\end{proof}

%%%%%%%%%%%%%%%%%%%%%%%%%%%%%%%%%%%%%%%%%%
\subsection{A key dissection lemma}
\label{sec:keylem}
%%%%%%%%%%%%%%%%%%%%%%%%%%%%%%%%%%%%%%%%%%

From \eqref{qpideqa} and the definition of $(a_{t})_{t}$, it is clear that
    \(
    \label{QQa}
    \begin{aligned}
        \sum_{t=-\infty}^{\infty}a_{t}q^{t} 
        &=  \<-q^{3bm+p},-q^{3bn+p};q^{3p}\>_{\infty} 
            - q^{bn}\<-q^{3bm+p},-q^{3bn+2p};q^{3p}\>_{\infty} \\
        & \quad -q^{bm}\<-q^{3bm+2p},-q^{3bn+p};q^{3p}\>_{\infty} 
        +   q^{bm+bn}\<-q^{3bm+2p},-q^{3bn+2p};q^{3p}\>_{\infty}.
    \end{aligned}
    \)

\begin{lemma}
    \label{lem:rComponent}
    Fix $r$ with $0 \leq r \leq p-1$, let $\xk$ satisfy
        \[
            3b\xk m \equiv r \mod{p}, \qquad 0 \leq \xk \leq p-1,
        \]
    and for this fixed $\xk$ let
        \[
            \xa = \xa_{\xk}, \quad \xb = \xb_{\xk}, \quad\text{and}\quad \xg = \xg_{\xk}.
        \]
    Then the $r$-component of $Q(q^{bm},q^{p})Q(q^{bn},q^{p})$, namely $\sum_{t\in\Z} a_{pt+r}q^{pt+r}$, is equal to
        \(
        \label{eq:rComponent0}
        \begin{aligned}
            & q^{\xg}\<-q^{\xa},-q^{\xb};q^{y}\>_{\infty} - q^{\xg+\xd}\<-q^{\xa-nsp^{2}},-q^{\xb+msp^{2}};q^{y}\>_{\infty} \\
            & \qquad  -q^{\xg+\xi}\<-q^{\xa-msp^{2}},-q^{\xb-nsp^{2}};q^{y}\>_{\infty} + q^{\xg+\xd+\xi}\<-q^{\xa-msp^{2}-nsp^{2}},-q^{\xb+msp^{2}-nsp^{2}};q^{y}\>_{\infty},
        \end{aligned}
        \)
    where
        \begin{align*}
            \xd &= -bnps + \tf{1}{6}(1+ps)p^{2}s, \\
            \xi &= -bmps-\xk p^2{s} + \tf{1}{6}(1+ps)p^{2}s.
        \end{align*}
\end{lemma}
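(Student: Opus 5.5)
Starting from \eqref{QQa}, I substitute the four expansions \eqref{qtppexpansca}--\eqref{qtppexpanscd} from Corollary \ref{expancor}. This writes $\sum_t a_t q^t$ as four sums over $k$, with prefactors $1$, $-q^{bn}$, $-q^{bm}$, $q^{bm+bn}$. Each bracket $\<-q^{A},-q^{B};q^{y}\>_\infty$ is a series in $q^{y}=q^{3p^2}$ multiplied by monomials in $q^{A},q^{B}$. Since $A,B$ are multiples of $p$, each such bracket involves only exponents divisible by $p$. Hence the residue class mod $p$ of a summand's exponents is set by its outer exponent:
\[
\xg_k,\qquad \xg_k+bn,\qquad \xg_k+pk+bm,\qquad \xg_k+pk+bm+bn .
\]
Modulo $p$ these are $3bkm$, $3bkm+bn$, $3bkm+bm$ and $3bkm+bm+bn$. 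So the $r$-component collects the following terms:
\begin{itemize}
\item from the first sum, the term $k=\xk$ with $3b\xk m\equiv r$;
\item from the second sum, the term $k=\xk_2$ with $3b\xk_2 m\equiv r-bn$;
\item from the third sum, the term $k=\xk_3$ with $3b\xk_3 m\equiv r-bm$;
\item from the fourth sum, the term $k=\xk_4$ with $3b\xk_4 m\equiv r-bm-bn$.
\end{itemize}
Because $3m\mu\equiv1$, we may take $\xk_2\equiv \xk-n\mu$, $\xk_3\equiv\xk-m\mu$ and $\xk_4\equiv \xk-(m+n)\mu$ modulo $p$. By Corollary \ref{k+p} the summands are invariant under $k\to k\pm p$, so we may use the unreduced integer representatives $\xk-n\mu$, $\xk-m\mu$, $\xk-(m+n)\mu$ directly.

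Next I rewrite each of the three shifted terms in terms of $\xa=\xa_\xk$ and $\xb=\xb_\xk$. Take the second sum with $k=\xk-n\mu$. Its arguments are
\[
\xa_{\xk-n\mu}+np=\xa-3mn\mu p+np, \qquad \xb_{\xk-n\mu}-mp=\xb-3n^2\mu p-mp .
\]
Using $3m\mu=1+ps$, the first becomes $\xa-nsp^2$. Using $n^2=p-m^2$, we get $3n^2\mu=3p\mu-m(1+ps)$, so the second becomes $\xb+msp^2-3p^2\mu=\xb+msp^2-\mu y$. The two arguments therefore agree with the lemma's $\xa-nsp^2$ and $\xb+msp^2$ up to an integer multiple of $y$. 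I remove that multiple with \eqref{eq:jtpShift}; since the bases carry the sign $-$, the factor $(\mp1)^t$ is $+1$, so no sign appears.

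The third and fourth sums are handled the same way. For the third the shifts are $-3m^2\mu p+mp=-msp^2$ and $-3mn\mu p+np=-nsp^2$, with no multiple of $y$ to remove. For the fourth the combination is analogous: the $\mu$-dependence cancels in the first argument and leaves $\mu y$-type multiples in the second. Thus all four arguments reduce to those displayed in \eqref{eq:rComponent0}, up to powers of $q$.

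The main obstacle is to check that the total exponent in each term equals $\xg+\xd$, $\xg+\xi$ and $\xg+\xd+\xi$ respectively. In the second term this total is made up of three parts:
\begin{itemize}
\item the outer exponent $\xg_{\xk-n\mu}+bn$;
\item the correction $-t x-\xs_t$ from \eqref{eq:jtpShift}, with $t=-\mu$ and $x$ the reduced argument;
\item nothing further, since the first argument needs no shift.
\end{itemize}
I expand $\xg_{\xk-n\mu}=\xg_\xk-3bmn\mu+\tfrac p2\big(-6\xk n\mu+3n^2\mu^2+n\mu\big)$ and eliminate $\mu$ via $3m\mu=1+ps$ and $m^2+n^2=p$. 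The difference should then collapse to $\xd=-bnps+\tfrac16(1+ps)p^2s$, in the same way that the factor $(p-m^2-n^2)$ collapsed in Corollary \ref{k+p}. For the third term the $\xk$-dependence $-6\xk m\mu\cdot p/2=-\xk p(1+ps)$ gives the term $-\xk p^2 s$ in $\xi$, plus a multiple of $p$ that is absorbed. The fourth term should split as the sum of the two. This polynomial identity check, which the paper delegates to Mathematica, is the tedious step; I would organize it by writing every quantity as a polynomial in $\mu$ and reducing with the two relations.
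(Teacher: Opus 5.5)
Your proposal is correct and follows the paper's proof essentially step for step. Like the paper, you substitute Corollary~\ref{expancor} into \eqref{QQa}, take $k_i=\xk,\ \xk-n\mu,\ \xk-m\mu,\ \xk-(m+n)\mu$, strip the $-\mu y$ from the second arguments via \eqref{eq:jtpShift}, and match exponents; your explicit appeal to Corollary~\ref{k+p} for the unreduced representatives is something the paper uses only tacitly. The exponent identities you defer do check out. For example, in the fourth term the part free of $b$ and $\xk$ reduces to $m\mu s p^2=\tfrac13(1+ps)p^2s$, which is exactly the two copies of $\tfrac16(1+ps)p^2s$ in $\xd+\xi$.
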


\begin{proof}
The equations of Corollary \ref{expancor} yield that
    \begin{align*}
        \sum_{t=-\infty}^{\infty}a_{t}q^{t} &= 
        \sum_{k} q^{\xg_{k}} \<-q^{\xa_{k}},-q^{\xb_{k}};q^{y}\>_{\infty} 
        - \sum_{k} q^{\xg_{k}+bn} \<-q^{\xa_{k}+np},-q^{\xb_{k}-mp};q^{y}\>_{\infty}  \\
        & \qquad - \sum_{k} q^{\xg_{k}+bm+pk} \<-q^{\xa_{k}+mp},-q^{\xb_{k}+np};q^{y}\>_{\infty}\\
        & \qquad + \sum_{k} q^{\xg_{k}+b(m+n)+pk} \<-q^{\xa_{k}+(m+n)p},-q^{\xb_{k}+(m-n)p};q^{y}\>_{\infty} \\
        &=: \xS_{1} - \xS_{2} - \xS_{3} + \xS_{4},
    \end{align*}
and within each $\xS_{i}$, the $r$-component is determined by some $0 \leq k_{i} \leq p-1$ that makes the ``external $q$-exponent'' equivalent to $r \mod{p}$\footnote{We note from \eqref{eq:a(k)} and \eqref{eq:b(k)} that $p \.|\. \xa_{k}$ and $p \.|\. \xb_{k}$, respectively.}. Specifically, since $\xg_k \equiv 3bkm \mod{p}$, for $1 \leq i \leq 4$ the $r$-component of $\xS_{i}$ corresponds to $0 \leq k_{i} \leq p-1$ satisfying
    \begin{subequations}
    \begin{align}
    \label{eq:gamma(k1)}
        3bk_{1}m &\equiv r \mod{p}, \\
        bm+3bk_{2}m &\equiv r \mod{p}, \\
        bn+3bk_{3}m &\equiv r \mod{p}, \\
    \label{eq:gamma(k4)}
        b(m+n)+3bk_{4}m &\equiv r \mod{p},
    \end{align}
    \end{subequations}
respectively. Fixing $\xk$ such that
    \[
        3b\xk m \equiv r \mod{p}
    \]
then, the quantities
    \(
    \label{eq:kSols}
        k_{1} = \xk, \quad 
        k_{2} = \xk-n\mu, \quad
        k_{3} = \xk-m\mu, \quad\text{and}\quad
        k_{4} = \xk-m\mu-n\mu
    \)
satisfy \eqref{eq:gamma(k1)}--\eqref{eq:gamma(k4)}, respectively, and with this fixed $\xk$ we let
    \[
        \xa=\xa_{\xk}, \quad \xb=\xb_{\xk}, \quad\text{and}\quad \xg=\xg_{\xk}.
    \]

It remains now to simply expand the $k$-th terms in each $\xS_{i}$ using the respective $k_{i}$ from \eqref{eq:kSols}. 
There is nothing to do when $k = k_{1} = \xk$, so let $k = k_{2} = \xk-n\mu$ and consider the $k$-summand of $\xS_{2}$. We find that 
    \begin{align*}
        \xa_{\xk-n\mu}+np &= \xa-nsp^{2}, \\
        \xb_{\xk-n\mu}-mp &= \xb+msp^{2}-\mu y, \\
        bn+\xg_{\xk-n\mu} &= \xg+\xd_{0},
    \end{align*}
where
    \[
        \xd_{0} := -bnps + (1-3\xk)\mu n + \tfr{1}{2}\mu n(3\mu n-1)p.
    \]
Thus, the $r$-component of $\xS_{2}$ is
    \[
        q^{\xg+\xd_{0}}\<-q^{\xa-nsp^{2}},-q^{\xb+msp^{2}-\mu y};q^{y}\>_{\infty},
    \]
and we apply \eqref{eq:jtpShift} to simplify this to
    \(
        q^{\xg+\xd}\<-q^{\xa-nsp^{2}},-q^{\xb+msp^{2}};q^{y}\>_{\infty}, \qquad\text{with}\qquad 
        \xd = -bnps + \fr{(1+ps)p^{2}s}{6}.
    \)
The analogous formulae for $\xS_{3}$ and $\xS_{4}$ follow from similar direct computations.
\end{proof}

%%%%%%%%%%%%%%%%%%%%%%%%%%%%%%%%%%%%%%%%%%
\section{The case \tops{$p\equiv 1\mod{12}$}{p≡ 1(mod12)}}
%%%%%%%%%%%%%%%%%%%%%%%%%%%%%%%%%%%%%%%%%%
\label{sec:1(mod12)}

The assumption that $p \equiv 1 \mod{12}$, in particular the assumption that $p \equiv 1 \mod{3}$, implies that either $m$ or $n$ must be $0 \mod{3}$. As our arguments (previous and following) are symmetric in $m$ and $n$, no generality is lost in assuming that $n \equiv 0 \mod{3}$. With this assumption, in this section we set
    \[
        N := \fr{ns}{3},
    \]
and we recall that $3m\mu = 1 + ps$. The goal of this section is proof of the following theorem.

\begin{theorem}\label{Qdissect1}
    Let $p \equiv 1 \pmod{12}$ and $p=m^2+n^2$, with~$3|n$, and let $b > 0$.
    \begin{enumerate}[label={\normalfont(\arabic*)}]
    \item \label{item:Thm(1)} If $m \equiv 1 \pmod{3}$, then 
        \(\label{Qdissect1eq1}
            Q(q^{b m},q^p)Q(q^{b n},q^p) 
            = \sum_{k=0}^{p-1} q^{\xg_{k}}
                Q\?\big({q^{(\xa_{k}-p^2)/3},q^{p^2}}\big) 
                Q\?\big({q^{(2p^{2} - \xb_{k})/3},q^{p^2}}\big).
        \)
    \item \label{item:Thm(2)} If $m \equiv 2 \pmod{3}$, then 
        \(\label{Qdissect1eq2}
            Q(q^{bm},q^p)Q(q^{bn},q^p)
            = \sum_{k=0}^{p-1} q^{\xg_{k}} 
                Q\?\big({ q^{(2p^{2}-\xa_{k})/3},q^{p^2}}\big) 
                Q\?\big({q^{(\xb_{k}-p^{2})/3},q^{p^2}}\big).
        \)
    \end{enumerate}
\end{theorem}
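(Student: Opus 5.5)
Fix a residue $r$ with $0\le r\le p-1$ and take $\xk$ as in Lemma \ref{lem:rComponent}, so that the $r$-component of $Q(q^{bm},q^p)Q(q^{bn},q^p)$ is the four-term expression \eqref{eq:rComponent0}. The right side of \eqref{Qdissect1eq1} has a single summand whose $q$-exponents are congruent to $r$ modulo $p$, namely $k=\xk$: here $\xg_k\equiv 3bkm$, and the quintuple products have base $q^{p^2}$ with arguments that are powers of $q^p$. So it suffices to prove, for each $\xk$,
\[
\eqref{eq:rComponent0} = q^{\xg}\,Q\big(q^{(\xa-p^2)/3},q^{p^2}\big)\,Q\big(q^{(2p^2-\xb)/3},q^{p^2}\big)
\]
in case \ref{item:Thm(1)}, and the analogous identity in case \ref{item:Thm(2)}. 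First I check that the exponents $(\xa-p^2)/3$ and $(2p^2-\xb)/3$ are integers. This uses $3\mid n$ and $m\equiv 1\pmod 3$. It also uses that $m+n$ is odd and $p$ is odd, which makes $\tfrac{(m\pm n)p}{2}+\tfrac{3p^2}{2}$ an integer.

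The plan is to expand the right side with the last form of \eqref{qpideq}. With base $q^{p^2}$ we have $Q(z,q^{p^2})=\<-q^{p^2}z^3;q^{3p^2}\>_\infty - z\<-q^{2p^2}z^3;q^{3p^2}\>_\infty$, and $z^3=q^{\xa-p^2}$ or $q^{2p^2-\xb}$. The first factor becomes $\<-q^{\xa};q^y\>_\infty - q^{(\xa-p^2)/3}\<-q^{\xa+p^2};q^y\>_\infty$. The second factor becomes $\<-q^{3p^2-\xb};q^y\>_\infty - q^{(2p^2-\xb)/3}\<-q^{4p^2-\xb};q^y\>_\infty$, and using $\<z;q\>=\<q/z;q\>$ this equals $\<-q^{\xb};q^y\>_\infty - q^{(2p^2-\xb)/3}\<-q^{\xb-p^2};q^y\>_\infty$. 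Multiplying out gives four products of theta functions of base $y=3p^2$. These have the same shape as the four terms of \eqref{eq:rComponent0}, whose arguments are shifted by $\pm msp^2$ and $\pm nsp^2$.

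The matching step is as follows. Since $3m\mu=1+ps$ and $m\equiv1\pmod 3$, we have $ps\equiv -1\pmod 3$. With $3\mid n$ this gives $nsp^2\equiv 0\pmod{y}$, that is, $nsp^2 = Np\cdot 3p = (Np)\cdot\tfrac{y}{p}$; more precisely, $nsp^2/y = N/1\cdot$ (an integer), since $nsp^2/(3p^2)=ns/3=N$. Also $msp^2\equiv \pm p^2\pmod{3p^2}$, because $ms\equiv \pm1\pmod 3$; the correct sign should follow from $m\equiv 1$ and $ps\equiv -1\pmod 3$. So each shift $-nsp^2$ is a multiple $Ny$ of the period, and each shift $\pm msp^2$ is $\mp p^2$ plus a multiple of $y$. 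Applying \eqref{eq:jtpShift} then turns the second, third and fourth terms of \eqref{eq:rComponent0} into $q^{\text{(power)}}$ times $\<-q^{\xa},-q^{\xb-p^2};q^y\>_\infty$, $\<-q^{\xa+p^2},-q^{\xb};q^y\>_\infty$ and $\<-q^{\xa+p^2},-q^{\xb-p^2};q^y\>_\infty$ (possibly after a reflection), which are the right shapes. The signs $(\mp1)^t$ from \eqref{eq:jtpShift} should cancel against the $-$ signs, and this needs a parity check on $N$ and on $(ms\pm1)/3$.

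The main obstacle is the exponent bookkeeping. After the shifts, I must show that each resulting power of $q$ equals the corresponding cross term, namely $q^{(\xa-p^2)/3}$, $q^{(2p^2-\xb)/3}$, or their product. This is a polynomial identity in $b,\xk,m,n,s$, $\mu$, and it should reduce to multiples of $p-m^2-n^2$ together with the relation $3m\mu=1+ps$, as in the proof of Corollary \ref{k+p}. I would first do this for the cleanest term, the one matching $\xd$ (where only the $\xb$-argument shifts by a multiple of $y$), and then handle $\xi$ and $\xd+\xi$ the same way. Case \ref{item:Thm(2)} is identical with the roles reversed: $ps\equiv1\pmod 3$ and the reflection $\<z;q\>=\<q/z;q\>$ is applied to the other factor. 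Summing over $r$ then gives \eqref{Qdissect1eq1} and \eqref{Qdissect1eq2}.
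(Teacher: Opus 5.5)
Your proposal is correct and is essentially the paper's proof. You pass to $r$-components via Lemma~\ref{lem:rComponent}, absorb $nsp^2=Ny$ and $msp^2\equiv \mp p^2 \pmod{y}$ using \eqref{eq:jtpShift}, and match the four resulting terms against the expansion of the two quintuple products given by the last form of \eqref{qpideq} together with the reflection $\<z;q\>_\infty=\<q/z;q\>_\infty$. The remaining exponent identities do reduce to multiples of $p-m^2-n^2$, as you predict; the paper does the same computation but factors the $r$-component first instead of expanding the right side.

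There are two small slips. First, no sign parity check is needed: every theta function here has argument $-q^{x}$, and for such arguments \eqref{eq:jtpShift} contributes $(+1)^t$. Second, in case~(2) what changes is $ms\equiv 1 \pmod 3$, so $msp^2\equiv p^2 \pmod{y}$; the congruence $ps\equiv -1 \pmod 3$ holds in both cases because $3m\mu=1+ps$.
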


\begin{proof}
In short, we aim to show that all $r$-components (for $0 \leq r \leq p-1$) of both sides of \eqref{Qdissect1eq1} and \eqref{Qdissect1eq2} are equal. Thus, fix $r$ with $0 \leq r \leq p-1$, let $\xk$ satisfy
    \[
        3b\xk m \equiv r \mod{p}, \qquad 0 \leq \xk \leq p-1,
    \]
and for this $\xk$ let
    \[
        \xa = \xa_{\xk}, \quad \xb = \xb_{\xk}, \quad\text{and}\quad \xg = \xg_{\xk}.
    \]

By Lemma \ref{lem:rComponent}, the $r$-component of $Q(q^{bm},q^{p})Q(q^{bn},q^{p})$ is
    \(
    \label{eq:rCompRecall}
    \begin{aligned}
        & q^{\xg}\<-q^{\xa},-q^{\xb};q^{y}\>_{\infty} - q^{\xg+\xd}\<-q^{\xa-nsp^{2}},-q^{\xb+msp^{2}};q^{y}\>_{\infty} \\
        & \qquad - q^{\xg+\xi}\<-q^{\xa-msp^{2}},-q^{\xb-nsp^{2}};q^{y}\>_{\infty} \\
        &\qquad + q^{\xg+\xd+\xi}\<-q^{\xa-msp^{2}-nsp^{2}},-q^{\xb+msp^{2}-nsp^{2}};q^{y}\>_{\infty},
    \end{aligned}
    \)
where
    \[
        \xd = -bnsp + \tf{1}{6}(1+ps)p^{2}s \qquad\text{and}\qquad \xi = -bmps-\xk p^2{s} + \tf{1}{6}(1+ps)p^{2}s.
    \]
Under our assumptions that $3|n$ and $N = \tf{ns}{3}$, 
we may further reduce \eqref{eq:rCompRecall}. In particular, as $nsp^2 = Ny$ and $\xs_{-N} = Ny+\xs_{N}$, we use \eqref{eq:jtpShift} to transform \eqref{eq:rCompRecall} into
    \[
    \begin{aligned}
        & q^{\xg}\<-q^{\xa},-q^{\xb};q^{y}\>_{\infty} 
        - q^{\xg+\xd+[N(\xa-y)-\xs_{N}]}\<-q^{\xa},-q^{\xb+msp^{2}};q^{y}\>_{\infty} \\
        & \qquad  -q^{\xg+\xi+[N(\xb-y)-\xs_{N}]}\<-q^{\xa-msp^{2}},-q^{\xb};q^{y}\>_{\infty} \\
        &\qquad + q^{%
            \xg+\xd+\xi+\left[N(\xa-msp^{2}-y)-\xs_{N}\right]
            + \left[N(\xb+msp^{2}-y)-\xs_{N}\right]%
        } %
        \<-q^{\xa-msp^{2}},-q^{\xb+msp^{2}};q^{y}\>_{\infty},
    \end{aligned}
    \]
which we see can be factored as
    \(
    \label{eq:rComp0Factored}
    \begin{aligned}
        q^{\xg} &%
        \left(%
            \<-q^{\xa};q^{y}\>_{\infty} - q^{\xi+[N(\xb-y)-\xs_{N}]}\<-q^{\xa-msp^{2}};q^{y}\>_{\infty}%
        \right) \\
        &\qquad\times \left(%
            \<-q^{\xb};q^{y}\>_{\infty} - q^{\xd+[N(\xa-y)-\xs_{N}]}\<-q^{\xb+msp^{2}};q^{y}\>_{\infty}%
        \right).
    \end{aligned}
    \)

Pausing to consider the right-hand sides of \eqref{Qdissect1eq1} and \eqref{Qdissect1eq2}, we recall that
    \[
        Q(z,q) = \<-z^{3}q;q^{3}\>_{\infty} - z\<-z^{3}q^{2};q^{3}\>_{\infty},
    \]
so that, for general integers $A$ and $B$, one has
    \(
    \label{eq:QQExpansionDemo}
    \begin{aligned}
        Q(q^{\fr{A}{3}},q^{p^2})Q(q^{\fr{B}{3}},q^{p^2}) 
        &= \left(\<-q^{A+p^2};q^y\>_{\infty} - q^{\fr{A}{3}}\<-q^{A+2p^2};q^{y}\>_{\infty}\right) \\
        &\qquad \times \left(\<-q^{B+p^2};q^y\>_{\infty} - q^{\fr{B}{3}}\<-q^{B+2p^2};q^{y}\>_{\infty}\right)
    \end{aligned}
    \qquad (y=3p^2).
    \)
Comparing \eqref{eq:rComp0Factored} and \eqref{eq:QQExpansionDemo} (ignoring the extra $q^{\xg}$ in the former), we want to select $A$ and $B$ to both equate the expressions, and to have $A/3$ and $B/3$ be integral. To equate \eqref{eq:rComp0Factored} and \eqref{eq:QQExpansionDemo} then, evidently we want
    \[
        A+p^2 = \xa \qquad\text{or}\qquad A+p^2 = 3p^2 - \xa,
    \]
and
    \[
        B+p^2 = \xb \qquad\text{or}\qquad B+p^2 = 3p^2 - \xb.
    \]
For our integrality condition, dividing these potential $A$ and $B$ by $3$, we see that
    \[
        \fr{\xa-p^2}{3} = bp+\xk mp+\fr{(p-m-n)p}{6} \qquad\text{and}\qquad
        \fr{2p^2-\xa}{3} = -bp-\xk mp+\fr{(p+m+n)p}{6},
    \]
and
    \[
        \fr{\xb-p^2}{3} = \xk np+\fr{(p+m-n)p}{6} \qquad\text{and}\qquad
        \fr{2p^2-\xb}{3} = -\xk np+\fr{(p-m+n)p}{6}.
    \]

Thus, our choices for $A$ and $B$ must ensure that at least one of the numerators in each pair is $0 \mod{6}$. Already we know that $p \pm m \pm n \equiv 0 \mod{2}$, so it remains only to consider $p \pm m \pm n \mod{3}$. We have assumed that $p \equiv 1 \mod{3}$ and $n \equiv 0 \mod{3}$, so our choices for $A$ and $B$ depend only on $m \mod{3}$. 

Focusing on the case $m \equiv 1 \mod{3}$, we see at once that
    \[
        \fr{\xa-p^2}{3} = bp+\xk mp+\fr{(p-m-n)p}{6} \qquad\text{and}\qquad 
        \fr{2p^2-\xb}{3} = -\xk n + \fr{(p-m+n)p}{6}
    \]
are integral. Thus, in this case we let
    \[
        A = \xa-p^2 \qquad\text{and}\qquad 
        B = 2p^2-\xb,
    \] 
and, since $s \equiv 2 \mod{3}$, let
    \[
        ms = 2 + 3M \qquad (\text{cf.~$ns = 3N$}),
    \]
and directly verify that \eqref{eq:rComp0Factored} and \eqref{eq:QQExpansionDemo} are equal. The computations are tedious and cumbersome: as an example we note that
    \[
        \<-q^{\xa-msp^2};q^{y}\>_{\infty} 
        = \<-q^{(\xa+p^2)-(M+1)y};q^{y}\>_{\infty}
        = q^{\lf[(M+1)(\xa+p^2-y)-\xs_{M+1}\rh]} \<-q^{\xa+p^2};q^{y}\>_{\infty},
    \]
so that in \eqref{eq:rComp0Factored} we have
    \[
    q^{\xi+\lf[N(\beta-y)-\xs_{N}\rh]}
        \<-q^{\xa-msp^2}; q^{y}\>_{\infty} 
    = q^{\xi+\lf[N(\beta-y)-\xs_{N}\rh]+\lf[(M+1)(\xa+p^2-y)-\xs_{M+1}\rh]}
        \<-q^{A+2p^2};q^{y}\>_{\infty}.
    \]
One may then verify---using a symbolic calculator if desired---that indeed
    \(
        \frac{A}{3}  
        = \xi + \lf[N(\xb-y)-\xs_{N}\rh] 
        + \lf[(M+1)(\xa+p^2-y)-\xs_{M+1}\rh],
    \)
as desired, and then make an analogous verification for $B/3$. Thus, with the equality of \eqref{eq:rComp0Factored} and \eqref{eq:QQExpansionDemo} we deduce that
    \[
        \sum_{t=-\infty}^{\infty} a_{pt+r}q^{pt+r} = q^{\xg}%
        Q\?\big(q^{(\xa-p^2)/3},q^{p^2}\big) 
        Q\?\big(q^{(2p^2-\xb)/3},q^{p^2}\big),
    \]
and, repeating this for $0 \leq r \leq p-1$, we establish part \ref{item:Thm(1)} of the theorem.

For the case $m \equiv 2 \mod{3}$, we find that
    \[
        \fr{2p^2-\xa}{3} = -bp-kmp+\fr{(p+m+n)p}{6} \qquad\text{and}\qquad
        \fr{\xb-p^2}{3} = knp+\fr{(p+m-n)p}{6}
    \]
are integral, so that letting
    \[
        A = 2p^2-\xa \qquad\text{and}\qquad 
        B = \xb-p^2,
    \] 
one may similarly directly verify that \eqref{eq:rComp0Factored} and \eqref{eq:QQExpansionDemo} are equal, and thereby establish part \ref{item:Thm(2)} of the theorem.
\end{proof}

\begin{corollary}\label{Qdissectanvanc}
    Let $p \equiv 1 \mod{12}$, let $m$, $n$, and $b$ be as in Theorem \ref{Qdissect1}, let 
        \[
            w \equiv \bar{2}(m+n)\mod{p},
        \]
    and let $(a_{t})_{t \in \Z}$ be defined as in \eqref{QQa}. One has
        \[
            a_{pt+bw} = a_{pt+b(w-3b)}=0 \qquad \text{for all $t$.}
        \]
\end{corollary}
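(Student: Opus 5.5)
The plan is to read the vanishing straight off the $p$-dissection of Theorem~\ref{Qdissect1}. By that theorem (more precisely, by its proof), for each residue $0 \le r \le p-1$ the $r$-component $\sum_t a_{pt+r}q^{pt+r}$ equals
\[
q^{\xg_\xk}\,Q\big(q^{A/3},q^{p^2}\big)\,Q\big(q^{B/3},q^{p^2}\big),
\]
where $\xk$ is the unique residue in $[0,p-1]$ with $3b\xk m \equiv r \pmod p$. The exponents $(A,B)$ are as follows:
\begin{itemize}
\item if $m\equiv 1 \pmod 3$, then $(A,B)=(\xa_\xk-p^2,\ 2p^2-\xb_\xk)$;
\item if $m \equiv 2 \pmod 3$, then $(A,B)=(2p^2-\xa_\xk,\ \xb_\xk-p^2)$.
\end{itemize}
By Corollary~\ref{cor:<qx,qy>=0}, this component vanishes identically as soon as $A/3\equiv 0$ or $B/3 \equiv 0 \pmod{p^2}$. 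So it suffices to find the $\xk$ for which this happens and to compute the corresponding $r \equiv 3b\xk m$.

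\textbf{Case $m \equiv 1 \pmod 3$.} We have
\[
\frac{A}{3} = p\Big(b+\xk m + \tfr{p-m-n}{6}\Big),
\]
where $\tfr{p-m-n}{6}$ is an integer congruent to $-\bar 6(m+n) \pmod p$. Hence $A/3 \equiv 0 \pmod{p^2}$ exactly when
\[
\xk m \equiv \bar6(m+n)-b \pmod p,
\]
and then
\[
r\equiv 3b\xk m \equiv b\big(\bar2(m+n)-3b\big)=b(w-3b).
\]
Similarly,
\[
\frac{B}{3}=p\Big(-\xk n+\tfr{p-m+n}{6}\Big),
\]
which vanishes modulo $p^2$ when $\xk n\equiv \bar6(n-m)$ (note $p\nmid n$). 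Then
\[
r \equiv 3b\xk m \equiv \bar2 b\,(n-m)\,m\bar n .
\]
Using $m^2\equiv -n^2 \pmod p$ we get $(n-m)m\bar n \equiv m - m^2\bar n \equiv m+n$, so $r\equiv bw$.

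\textbf{Case $m\equiv 2 \pmod 3$.} Here
\[
\frac{A}{3}=p\Big(-b-\xk m+\tfr{p+m+n}{6}\Big) \quad\text{and}\quad \frac{B}{3}=p\Big(\xk n+\tfr{p+m-n}{6}\Big).
\]
These vanish modulo $p^2$ under the same congruences $\xk m\equiv \bar 6(m+n)-b$ and $\xk n \equiv \bar6(n-m)$, respectively. The same residues $b(w-3b)$ and $bw$ therefore result.

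In both cases, every coefficient $a_{pt+r}$ with $r \equiv bw$ or $r\equiv b(w-3b) \pmod p$ is zero, which is the claim. The only delicate points are bookkeeping ones. First, $\xk$ must be taken in $[0,p-1]$, but any representative works by Corollary~\ref{k+p}, so reducing the congruence for $\xk$ modulo $p$ is harmless. Second, the integer fractions $\tfr{p\pm m\pm n}{6}$ must be reduced modulo $p$ correctly, using $p \equiv 0 \pmod p$ and $\gcd(6,p)=1$. The identity $m\bar n\equiv -n\bar m$ is the one algebraic step that makes the second residue come out as exactly $bw$.
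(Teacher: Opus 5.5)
Your proposal is correct and is essentially the paper's proof: you read off the $r$-component from Theorem~\ref{Qdissect1} and make it vanish by applying Corollary~\ref{cor:<qx,qy>=0} to the appropriate $Q$ factor. The differences are cosmetic. Where the paper writes $6\kappa n = n-m+pK$ (resp.\ $6\kappa m=-6b+m+n+pK$) and checks $K\equiv \pm1 \pmod{6}$, you use the integrality of $(p\pm m\pm n)/6$ and reduce modulo $p$ directly, and you also write out the $m\equiv 2\pmod{3}$ case that the paper calls parallel. Like the paper, you tacitly need $p\nmid b$ for $\kappa$ to be unique; this is harmless, since otherwise $Q(q^{bm},q^p)\equiv 0$.
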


\begin{proof}
First suppose that $m \equiv 1 \mod{3}$. Recalling from Corollary \ref{cor:<qx,qy>=0} that \mbox{$Q(q^{x},q^{y}) = 0$} whenever $x \equiv 0 \mod{y}$, where here $y > 0$ can be a general integer, we aim to apply said corollary in \eqref{Qdissect1eq1}. By \eqref{Qdissect1eq1} and the definitions of $\xa_{\xk}$, $\xb_{\xk}$, and $\xg_{\xk}$, for any $0 \leq r \leq p-1$ the $r$-component of $Q(q^{bm},q^{p})Q(q^{bn},q^{p})$ is
    \(
    \label{eq:rComp(Temp)}
        \sum_{t=-\infty}^{\infty} a_{pt+r}q^{pt+r} = q^{3b\xk m+\fr{\xk(3\xk-1)p}{2}} 
        Q\?\big({q^{bp+\xk mp + \fr{(p-m-n)p}{6}},q^{p^2}}\big) 
        Q\?\big({q^{-\xk np + \fr{(p-m+n)p}{6}},q^{p^2}}),
    \) 
where $3b\xk m \equiv r \mod{p}$ and $0 \leq \xk \leq p-1$. 

We first show that $a_{pt+bw}=0$ for all $t$, so fix $r \equiv bw \mod{p}$ in \eqref{eq:rComp(Temp)}. From the relation $3b\xk m \equiv bw \mod{p}$ and the equality $p = m^2+n^2$, it is easy to check that
    \[
        6\xk n \equiv \bar{m}n(m+n) \equiv \bar{m}(mn-m^{2}) \equiv n-m \mod{p},
    \]
so let
    \(
    \label{eq:6kn=pu(p)}
        6\xk n = n-m+pK.
    \)
Then
    \(
    \label{eq:Q0Factor(Case1)}
        Q\?\big({q^{-\xk np+\fr{(p-m+n)p}{6}},q^{p^{2}}}\big) 
        = Q\?\big(q^{\fr{(1-K)p^{2}}{6}},q^{p^{2}}\big),
    \)
so to apply Corollary \ref{cor:<qx,qy>=0} it suffices to show that $K \equiv 1 \mod{6}$. From \eqref{eq:6kn=pu(p)} and our assumption on $p$, evidently $K \equiv m-n \mod{6}$, and since $m-n \equiv 1 \mod{2}$ (trivially) and $m-n \equiv m \equiv 1 \mod{3}$, it follows at once that $K\equiv 1 \mod{6}$; thus \eqref{eq:Q0Factor(Case1)} and \eqref{eq:rComp(Temp)} are identically $0$, as are all $a_{pt+bw}$. 

Now fixing $r \equiv b(w-3b) \mod{p}$ in \eqref{eq:rComp(Temp)}, we find that $6\xk m \equiv -6b+m+n \mod{p}$, and we write
    \(
    \label{eq:6km-pK}
        6\xk m = -6b+m+n+pK.
    \)
Then 
    \(
    \label{eq:Q0Factor(Case2)}
        Q\?\big({%
            q^{bp+\xk mp+\fr{(p-m-n)p}{6}},q^{p^{2}}
        }\big) 
        = Q\?({
            q^{\fr{(1+K)p^{2}}{6}},q^{p^{2}}
        }\big),
    \)
so in this case it suffices to have $K \equiv -1 \mod{6}$. Said congruence is easily verified from \eqref{eq:6km-pK} and our assumptions on $m$ and $n$, whereby  \eqref{eq:Q0Factor(Case2)}, and consequently \eqref{eq:rComp(Temp)}, are equal to $0$, as are all $a_{pt+b(w-3b)}$. 

Finally, when $m \equiv 2 \mod{3}$, the result follows from completely parallel arguments using \eqref{Qdissect1eq2} in place of \eqref{Qdissect1eq1}.
\end{proof}

\begin{example}\label{p13vancex}
Let $p=13=2^{2}+3^{3}$ and $b=5$ in Corollary \ref{Qdissectanvanc}, and define $(a_t)_{t\in\Z}$ via
    \[
        Q(q^{10},q^{13})Q(q^{15},q^{13}) = \sum_{t=0}^{\infty} a_t q^t.
    \]
Then $w \equiv 9 \mod{13}$, $bw \equiv 6 \mod{13}$, and $b(w-3b) \equiv 9 \mod{13}$, whereby 
    \[
        a_{13t+6} = a_{13t+9} = 0 \qquad (t \in \Z).
    \]
\end{example}

%%%%%%%%%%%%%%%%%%%%%%%%%%%%%%%%%%%%%%%%%%
\subsection{Sign patterns when \tops{$p\equiv 1\mod{12}$}{p≡1(mod 12)}}
%%%%%%%%%%%%%%%%%%%%%%%%%%%%%%%%%%%%%%%%%%
\label{sec:signs1(mod12)}

We recall that
    \(
    \label{eq:QQ(Recall)}
        Q(q^{\l},q^{p}) = (q^{\l},q^{p-\l},q^{p};q^{p})_{\infty}(q^{p+2\l},q^{p-2\l};q^{2p})_{\infty}.
    \)

\begin{lemma}
    \label{lem:PosCoeffs}
    If $p > 10$ and $0 < \l < \fr{p}{2}$, and one defines $(c_{t})_{t \geq 0}$ via
        \(
        \label{eq:Q(ql,qp)Pos}
            \fr{Q(q^{h},q^{p})}{(q;q)_{\infty}} = \sum_{k=0}^{\infty} c_{t} q^{t},
        \)
    then one has $c_{t} > 0$ for all $t > t_{0}(\l)$.
\end{lemma}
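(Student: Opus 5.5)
The plan is to show that the quotient in \eqref{eq:Q(ql,qp)Pos} (with $h=\l$) is the generating function of partitions whose parts lie in a fixed set $S\subseteq\N$. Its coefficients are then automatically nonnegative. Eventual strict positivity will follow from an elementary Frobenius-type argument once we know that $S$ contains two consecutive integers.

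\emph{Step 1 (cancellation).} By \eqref{eq:QQ(Recall)}, the numerator $Q(q^{\l},q^{p})$ is a product of factors $(1-q^{j})$, each appearing once, with $j$ running over the multiset
\[
E=\{\l+pk,\ p-\l+pk,\ p+pk,\ p+2\l+2pk,\ p-2\l+2pk : k\ge 0\}.
\]
All these exponents are positive, since $0<\l<p/2$. I would check that no $j$ occurs twice. Modulo $p$, the five families lie in the residue classes $\l,\ -\l,\ 0,\ 2\l,\ -2\l$. Because $p$ is prime, $p>3$ and $0<\l<p/2$, these five classes are pairwise distinct. The only coincidences to exclude are $\l\equiv-\l$, $2\l\equiv-2\l$, $2\l\equiv\pm\l$ and $\pm\l\equiv0$, and each of these would force $p\mid \l$ or $p\mid 3\l$. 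Within a single family the exponents are obviously distinct. Hence every factor of the numerator cancels a distinct factor of $(q;q)_\infty$, and we obtain
\[
\frac{Q(q^{\l},q^{p})}{(q;q)_\infty}=\prod_{j\in S}\frac{1}{1-q^{j}},\qquad S:=\N\setminus E .
\]
This shows $c_t\ge 0$, with $c_t$ equal to the number of partitions of $t$ into parts from $S$.

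\emph{Step 2 (two consecutive parts).} In any block of $p$ consecutive positive integers, $E$ contains at most one element from each of the five residue classes above, so at most $5$ elements. Since $p>10$, i.e.\ $p\ge 11$, a block of $11$ consecutive integers therefore has at least $6$ elements of $S$. By pigeonhole, two of these are adjacent: if no two of $6$ elements among $11$ consecutive integers were adjacent, they would need a span of at least $11$ with gaps, which forces the removed $5$ to alternate exactly. In that extreme case a short extra check using the specific residues, or simply taking a longer block, finishes the argument. More robustly, over a block of length $2p$ the set $E$ has at most $3\cdot2+2=8$ elements (the last two families have period $2p$), leaving $2p-8\ge p+3>p$ elements of $S$. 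Hence some two of them are consecutive integers $a,a+1\in S$.

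\emph{Step 3 (Frobenius).} Since $\gcd(a,a+1)=1$, every integer $t>a(a+1)-a-(a+1)$ can be written as $t=ua+v(a+1)$ with $u,v\ge0$. This gives a partition of $t$ into parts from $S$, so $c_t\ge1$ for all $t>t_0(\l):=a^2-a-1$.

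The main obstacle is Step 1, namely verifying carefully that the five exponent families are pairwise disjoint so that the cancellation is genuinely factor-by-factor with no repeated factor left in the numerator. The counting in Step 2 is where the hypothesis $p>10$ enters, and I would do the length-$2p$ count to avoid the delicate pigeonhole edge case.
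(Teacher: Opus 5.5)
Your proposal is correct and follows the paper's proof. Like the paper, you cancel the numerator against $(q;q)_\infty$ to get a partition generating function with nonnegative coefficients, locate two consecutive allowed parts $a,a+1$, and conclude with the Frobenius bound $a(a+1)-a-(a+1)$. The only real difference is how you find the consecutive pair: the paper works directly in $\{1,\ldots,p-1\}\setminus\{\ell,2\ell,p-\ell,p-2\ell\}$, which has $p-5>(p-1)/2$ elements once $p\ge 11$, whereas you count over a block of length $2p$. That count is valid, but you should drop the inconclusive length-$11$ pigeonhole attempt that precedes it.
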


\begin{proof}
Writing $(q;q)_{\infty} = (q,q^{2},\ldots,q^{p};q^{p})_{\infty}$ and using the definition
    \[
        Q(q^{\l},q^{p}) = (q^{\l},q^{p-\l},q^{p};q^{p})_{\infty} (q^{p+2\l},q^{p-2\l};q^{2p})_{\infty},
    \]
we see that
    \(
    \label{eq:QQQuotient}
        \fr{Q(q^{h},q^{p})}{(q;q)_{\infty}}
        = (q^{2\l},q^{2p-2\l};q^{2p})_{\infty}^{-1}\prod_{\substack{k=1 \\[0.1em] k \,\not\equiv\, \pm r,\pm 2r \,(p)}}^{p-1} 
        (q^{k};q^{p})_{\infty}^{-1}.
    \)
Now, because $p \geq 11$, the range $\{1,\ldots,p-1\} \setminus \{\l,2\l,p-\l,p-2\l\}$ must contain at least one pair of consecutive numbers: if no such pair existed, then one must have $p \leq 10$. 
Letting $(x,y):=(x,x+1)$ be such a pair, the right hand side of \eqref{eq:QQQuotient} has a factor $(q^{x},q^{y};q^{p})_{\infty}^{-1}$, which itself has a factor
    \[
        (1-q^{x})^{-1}(1-q^{y})^{-1} = 1 + \sum_{k=1}^{\infty} \xr_{x,y}(k)q^{k},
    \]
where $\xr_{x,y}(k)$ counts the number of ways to write $k = ax+by$ with $a,b \geq 0$. As $(x,y)=1$ necessarily, it is well-known that $\xr_{x,y}(k) > 0$ for all $k > xy-x-y$. From this and the fact (from \eqref{eq:QQQuotient}) that the $c_{t}$ in \eqref{eq:Q(ql,qp)Pos} are at least \emph{nonnegative}, the result follows.
\end{proof}

To apply Lemma \ref{lem:PosCoeffs} to general quantities $Q(q^{h},q^{p})$, we first relate them to quantities of the form $Q(q^{\l},q^{p})$ with $0 \leq \l \leq \fr{p}{2}$. 
Fixing $p > 1$ and letting $h$ be any integer, write
    \[
        h = kp + \l \qquad\text{with}\qquad 0 \leq \l < p,
    \]
and, by the ``shift relation'' \eqref{eq:QQShift}, we have
    \[
        Q(q^{h},q^{p}) = q^{H}Q(q^{\l},q^{p}) \qquad 
        (H = -3k\l-\tf{k(3k-1)p}{2}).
    \]
If $0 \leq \l \leq \fr{p}{2}$ already then there is nothing more to change; if $\l > \fr{p}{2}$, then we write 
    \[
        Q(q^{h},q^{p}) = q^{H}Q(q^{\l},q^{p}) = (-1)q^{H+p-2\l}Q(q^{p-\l},q^{p}) \qquad (H = -3k\l-\tf{k(3k-1)p}{2}),
    \]
and $0 < p-\l < \fr{p}{2}$. Because $\l + (p-2\l) = p-\l$ and $(-1)^{p-2\l} = -1$, we may combine these cases in the following formula.

\begin{lemma}
    \label{lem:Q-rReduction2}
    Let $h$ and $p$ be integral with $p > 1$, write
        \[
            h = kp + \l \qquad \text{with}\qquad  0 \leq \l < p,
        \]
    and define
        \[
            \xe = \begin{cases}
            0, & 0 \leq \l \leq \fr{p}{2}, \\
            p-2\l, & \fr{p}{2} < \l < p.
            \end{cases}
        \]
    Then
        \[
            Q(q^{h},q^{p}) = (-1)^{\xe} q^{H+\xe} Q(q^{\l+\xe},q^{p}),  
        \]
    where $H = -3k\l - \fr{k(3k-1)p}{2}$.
\end{lemma}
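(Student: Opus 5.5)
The plan is to derive Lemma \ref{lem:Q-rReduction2} in two moves: first reduce $h$ to its residue $\l$ by the shift relation, then, when $\l>p/2$, reflect $\l$ to $p-\l$ using the product definition of $Q$.

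\emph{Step 1 (shift).} Write $h=kp+\l$ with $0\le\l<p$. Apply \eqref{eq:QQShift} of Lemma \ref{lem:Shifts} with $x=\l$, $y=p$ and $t=k$. This gives
\[
    Q(q^{h},q^{p})=q^{-3k\l-\fr{k(3k-1)p}{2}}Q(q^{\l},q^{p})=q^{H}Q(q^{\l},q^{p}).
\]
If $0\le\l\le p/2$, then $\xe=0$ and we are done.

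\emph{Step 2 (reflection).} Now suppose $p/2<\l<p$. I would prove the reflection identity
\[
    Q(q^{\l},q^{p})=-q^{p-2\l}Q(q^{p-\l},q^{p})
\]
directly from \eqref{eq:QQ(Recall)}.

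The triple-product part $(q^{\l},q^{p-\l},q^{p};q^{p})_{\infty}$ is symmetric under $\l\leftrightarrow p-\l$, so it is unchanged.

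For the other factor, compare
\[
    (q^{p+2\l},q^{p-2\l};q^{2p})_{\infty}
    \quad\text{with}\quad
    (q^{3p-2\l},q^{2\l-p};q^{2p})_{\infty}.
\]
Here $p-2\l<0$, so set $c=2\l-p>0$. Then
\[
    (q^{-c};q^{2p})_{\infty}=(1-q^{-c})(q^{2p-c};q^{2p})_{\infty}
    \quad\text{and}\quad
    (q^{c};q^{2p})_{\infty}=(1-q^{c})(q^{2p+c};q^{2p})_{\infty}.
\]
Since $2p+c=p+2\l$ and $2p-c=3p-2\l$, the remaining infinite products match, and the ratio of the two factors reduces to
\[
    \fr{1-q^{-c}}{1-q^{c}}=-q^{-c}=-q^{p-2\l}.
\]
Alternatively, I could verify the reflection identity from the series form \eqref{qpideq} via $k\to -k$, but the product computation is quicker.

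\emph{Step 3 (combine).} With $\xe=p-2\l$ we have $\l+\xe=p-\l$. Since $p-2\l$ is odd only when $p$ is odd, I would note that the intended sign factor is $-1$. I expect to state the sign as $(-1)^{\xe}$ under the standing assumption that $p$ is odd, which holds in the paper since $p\equiv1\pmod 4$. For general $p$ I would record the sign simply as $-1$.

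Combining Steps 1 and 2 then gives
\[
    Q(q^{h},q^{p})=(-1)^{\xe}q^{H+\xe}Q(q^{\l+\xe},q^{p}).
\]
The only delicate point is this parity and sign bookkeeping; everything else is a routine check.
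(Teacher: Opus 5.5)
Your proposal is correct and follows the paper's argument: reduce $h$ to $\ell$ via the shift relation \eqref{eq:QQShift}, then for $\ell>p/2$ apply the reflection $Q(q^{\ell},q^{p})=-q^{p-2\ell}Q(q^{p-\ell},q^{p})$ and absorb the sign using $(-1)^{p-2\ell}=-1$. The paper only asserts the reflection, so your product-form check fills that step in; your parity remark is also right, since the sign $(-1)^{\epsilon}$ needs $p$ odd (for example $p=4$, $h=3$ gives the wrong sign), which holds in every application because $p\equiv 1 \pmod{4}$.
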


{ % renewing \. spacing
\renewcommand{\.}{\mspace{0.9mu}}

\begin{corollary}\label{Qdissectanvanceqsign1}
    Let $p \equiv 1 \pmod{12}$ and $p=m^{2}+n^{2}$, with $3 \?\mid\? n$. Let $b>0$ with $(b,p)=1$, and define the sequence $(b_{t})_{t\in\Z}$ by
        \(\label{Qdissectc2eq1}
            \frac{Q(q^{bm},q^{p})Q(q^{bn},q^p)}{(q^p;q^p)_{\infty}^2} 
            = \sum_{t=-\infty}^{\infty} b_{t} q^{t}.
        \)
    Fix $0 \leq r \leq p-1$ and $\xk$ satisfying
        \[
            3b\xk m \equiv r \mod{p}, \qquad 0 \leq \xk \leq p-1.
        \]     
If $m \equiv 1 \pmod{3}$, let 
    \begin{align*}
        h &:= \fr{\xa_{\xk}-p^2}{3p} = b+\xk m+\tf{1}{6}(p-m-n), \\[0.2em]
        h\.' &:= \fr{2p^{2}-\xb_{\xk}}{3p} = -\xk n + \tf{1}{6}(p-m+n),
    \end{align*}
and if $m \equiv 2 \pmod{3}$, let
    \begin{align*}
        h &:= \fr{2p^{2}-\xa_{\xk}}{3p} = -b-\xk m+\tf{1}{6}(p+m+n), \\[0.2em]
        h' &:= \fr{\xb_{\xk}-p^{2}}{3p} = \xk n + \tfr{1}{6}(p+m-n).
    \end{align*}
In addition, define $k$ and $\l$ so that
    \[
        h = kp+\l \qquad \text{with $0 \leq \l < p$},
    \]
let
    \[
        \xe := \begin{cases}
            0 & 0 \leq \l < \fr{p}{2}, \\
            p-2\l & \fr{p}{2} < \l < p,
        \end{cases}
    \]
and similarly define $k'$, $\l\.'$, and $\xe'$.
Then either $b_{pt+r} = 0$ for all $t$, or one has 
    \[
        \sgn b_{pt+r} = (-1)^{\xe+\xe'} \qquad \text{for $t > t_{0}(r)$.}
    \]
\end{corollary}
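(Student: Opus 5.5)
By Theorem \ref{Qdissect1}, the $r$-component of $Q(q^{bm},q^{p})Q(q^{bn},q^{p})$ is the single term
\[
q^{\xg_{\xk}}\,Q\big(q^{hp},q^{p^2}\big)\,Q\big(q^{h'p},q^{p^2}\big),
\]
where $h,h'$ are as in the statement. The two cases $m\equiv 1,2\pmod 3$ only change which of \eqref{Qdissect1eq1}, \eqref{Qdissect1eq2} is used. Since $(q^p;q^p)_\infty^2$ is a series in $q^p$, dividing by it preserves components. Hence the $r$-component of \eqref{Qdissectc2eq1} is this term divided by $(q^p;q^p)_\infty^2$. Replacing $q^p$ by $q$ then gives
\[
\sum_t b_{pt+r}q^{t} = q^{(\xg_{\xk}-r)/p}\,\frac{Q(q^{h},q^{p})}{(q;q)_\infty}\cdot\frac{Q(q^{h'},q^{p})}{(q;q)_\infty}.
\]
So the sign question reduces to the product of two quotients of the type in Lemma \ref{lem:PosCoeffs}.

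Next I apply Lemma \ref{lem:Q-rReduction2} to each factor. Writing $h=kp+\l$, it gives $Q(q^{h},q^{p})=(-1)^{\xe}q^{H+\xe}Q(q^{\l+\xe},q^{p})$ with $0\le \l+\xe\le p/2$, and similarly for $h'$. This gives
\[
\sum_t b_{pt+r}q^t=(-1)^{\xe+\xe'}q^{E}\,\frac{Q(q^{\l+\xe},q^p)}{(q;q)_\infty}\,\frac{Q(q^{\l'+\xe'},q^p)}{(q;q)_\infty}
\]
for an integer shift $E$. Since $p$ is odd, $\l+\xe\ne p/2$, so three cases remain:
\begin{enumerate}[label=(\roman*)]
\item If $\l+\xe=0$ or $\l'+\xe'=0$, then $Q(1,q^p)=0$ by Corollary \ref{cor:<qx,qy>=0}. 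The component vanishes identically and $b_{pt+r}=0$ for all $t$.
\item Otherwise $0<\l+\xe<p/2$ and $0<\l'+\xe'<p/2$. Since $p\ge 13>10$, Lemma \ref{lem:PosCoeffs} applies to each quotient. Each quotient then has nonnegative coefficients, by the product formula \eqref{eq:QQQuotient}, and positive coefficients beyond some $t_0$.
\end{enumerate}

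In case (ii), the product of two series with nonnegative coefficients that are eventually positive has eventually positive coefficients. Indeed, the coefficient of $q^t$ in the product is at least $c_0c'_t$, and $c_0=c'_0=1$ because both quotients have constant term $1$. The shift by $q^E$ does not affect eventual positivity. Therefore $\sgn b_{pt+r}=(-1)^{\xe+\xe'}$ for $t>t_0(r)$.

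The only delicate points are bookkeeping. First, the statement's definition of $\xe$ must agree with Lemma \ref{lem:Q-rReduction2}; the boundary case $\l=p/2$ cannot occur since $p$ is odd. Second, the exponent $(\xg_{\xk}-r)/p$ must be an integer; this holds because $\xg_{\xk}\equiv 3b\xk m\equiv r\pmod p$. I expect no substantive obstacle: the main content is already in Theorem \ref{Qdissect1} and Lemma \ref{lem:PosCoeffs}. Note also that in Lemma \ref{lem:PosCoeffs} the symbol $h$ should be read as $\l$, the reduced exponent.
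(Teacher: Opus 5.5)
Your proposal is correct and follows essentially the same route as the paper: you isolate the $r$-component via Theorem~\ref{Qdissect1}, normalize each factor with Lemma~\ref{lem:Q-rReduction2}, and apply Lemma~\ref{lem:PosCoeffs} to each quotient. The only difference is in the degenerate case. You handle it directly through $\ell=0$ or $\ell'=0$ (so that $Q(1,q^{p})=0$), while the paper excludes $r\equiv bw,\,b(w-3b)$ by citing Corollary~\ref{Qdissectanvanc}; the paper's route tacitly uses that these are exactly the residues giving $\ell=0$ or $\ell'=0$, which your version does not need.
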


\begin{proof}
    Fixing $0 \leq r < p$ and $\xk$ such that $3b\xk m \equiv r \mod{p}$, suppose that \mbox{$m \equiv 1 \mod{3}$}, and define $h$, $h\.'$, $k$, $k'$, etc.~as described above. In light of Corollary \ref{Qdissectanvanc}, we further assume that $r$ is not congruent to $bw$ or $b(w-3b)$ modulo $p$, where $w \equiv \bar{2}(m+n) \mod{p}$, so that $(b_{pt+r})_{t}$ is not identically zero. 
    
    By Theorem \ref{Qdissect1}, the $r$-component of \eqref{Qdissectc2eq1} is
        \[
        \begin{aligned}
        \sum_{t=-\infty}^{\infty} b_{pt+r} q^{pt+r}
        &=  
            q^{\xg_{\xk}} 
            \frac{Q(q^{(\xa_{\xk}-p^{2})/3},q^{p^2})}{(q^p;q^p)_{\infty}} \times \frac{Q(q^{(2p^{2}-\xb_{\xk})/3},q^{p^2})}{(q^p;q^p)_{\infty}} \\
        &=
            q^{\xg_{\xk}} 
            \frac{Q(q^{ph},q^{p^2})}{(q^p;q^p)_{\infty}} \times
            \frac{Q(q^{ph\.'},q^{p^2})}{(q^p;q^p)_{\infty}},
        \end{aligned}
        \]
    and, according to Lemma \ref{lem:Q-rReduction2}, one has
        \(
        \label{eq:Q(ql,qp)Quotient}
            \fr{Q(q^{h},q^{p})}{(q;q)_{\infty}} = (-1)^{\xe} q^{H+\xe} \fr{Q(q^{\l+\xe},q^{p})}{(q;q)_{\infty}} \qquad 
            \lf(H = -3k\l - \tfr{k(3k-1)p}{2}\rh).
        \)
    Then, since $0 < \l + \xe < \fr{p}{2}$ by design, Lemma \ref{lem:PosCoeffs} implies that the series coefficients of $Q(q^{\l+\xe},q^{p})/(q;q)_{\infty}$, say $(c_{t})_{t \geq 0}$, are all positive for $t \geq t_{0}$. Replacing $q$ with $q^{p}$ in \eqref{eq:Q(ql,qp)Quotient}, and making similar arguments for $Q(q^{h\.'},q^{p})/(q;q)_{\infty}$, we deduce that
        \[
            \sum_{t=-\infty}^{\infty} b_{pt+r} q^{pt+r}
            =
            (-1)^{\xe+\xe'} q^{\xg_{\xk}+p(H+H'+\xe+\xe')} 
            \fr{
            Q\?\big({q^{p(\l+\xe)},q^{p^2}}\big)
            Q\?\big({q^{p(\l\.'+\xe')},q^{p^2}}\big)
            }{
            (q^p;q^p)_{\infty}^2
            },
        \]
    and the result follows.
\end{proof}

} % end scope of \. change

\begin{example}\label{p13signpat}
For $p = 13 = 2^{2} + 3^{2}$ and $b=5$, the signs of $b_{13t+r}$ for $0 \leq r < 13$ and $t > t_{0}(r)$ are given in the following table.
    \[
    \begin{array}{c|ccccccccccccc}
        r& 0 & 1 & 2 & 3 & 4 & 5 & 6 & 7 & 8 & 9 & 10 & 11 & 12 \\
        \hline
        \operatorname{sgn} b_{13t+r} & - & + & + & + & - & - & 0 & + & + & 0 & + & + & - \\
    \end{array}
    \]
We note that the entries of ``$0$'' for $r=6$ and $r=9$ are consistent with Example \ref{p13vancex}.
\end{example}

%%%%%%%%%%%%%%%%%%%%%%%%%%%%%%%%%%%%%%%%%%
%%%%%%%%%%%%%%%%%%%%%%%%%%%%%%%%%%%%%%%%%%
\section{The case \texorpdfstring{$p \equiv 5 \pmod{12}$}{p≡5(mod 12)}} 
\label{sec:5(mod12)}
%%%%%%%%%%%%%%%%%%%%%%%%%%%%%%%%%%%%%%%%%%
%%%%%%%%%%%%%%%%%%%%%%%%%%%%%%%%%%%%%%%%%%

We first recall that for complex $a$, $b$, and $q$, with $a,b \neq 0$ and $|q|<1$, we let
    \[
        W(a,b;q) := \<a,b,ab,\tf{a}{b};q\>_{\infty}(q;q)_{\infty}^{-2}.
    \]
Our goal in this section is the proof of the following dissection result.

\begin{theorem}
    \label{thm:QuintDiss(5mod12)}
    Let $p \equiv 5 \mod{12}$ with $p = m^2+n^2$, and let $b > 0$. 
        \begin{enumerate}[label={\normalfont(\arabic*)}]
            \item \label{item:5(mod12)(1)}If $m \equiv -n \mod{3}$, then defining
                \[
                    \xc_{k} := \xg_{k} + p(1-ns)\big({kn+\tfr{p+m-n(1+ps)}{6}}\big),
                \]
            one has
                \(
                \label{eq:WinqDiss(m=-n(3))}
                    Q(q^{bm},q^{p})Q(q^{bn},q^{p}) = \sum_{k=0}^{p-1} q^{\xc_{k}}
                    W(-q^{\xa_{k}/3},-q^{(\xb_{k}-nsp^2)/3};q^{p^2}).
                \)
            \item \label{item:5(mod12)(2)} If $m \equiv n \mod{3}$, then defining
                \[
                    \xc_{k}^{*} := \xg_{k} + p(1-ms)\big({b+km+\tfr{p-m(1+ps)-n}{6}}\big),
                \]
            one has
                \(
                \label{eq:WinqDiss(m=n(3))}
                    Q(q^{bm},q^{p})Q(q^{bn},q^{p}) = \sum_{k=0}^{p-1} q^{\xc_{k}^{*}}
                    W(-q^{\xb_{k}/3},-q^{(\xa_{k}-msp^2)/3};q^{p^2}).
                \)
        \end{enumerate}
\end{theorem}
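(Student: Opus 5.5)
Following the proof of Theorem \ref{Qdissect1}, we compare $r$-components of both sides. Fix $0 \leq r \leq p-1$, choose $\xk$ with $3b\xk m \equiv r \mod{p}$, and set $\xa=\xa_{\xk}$, $\xb=\xb_{\xk}$, $\xg=\xg_{\xk}$. By Lemma \ref{lem:rComponent}, the $r$-component of the left-hand side is the four-term expression \eqref{eq:rComponent0}.

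In Section \ref{sec:1(mod12)} the key fact was $3 \mid n$, which let $nsp^{2}$ be absorbed as a multiple of $y$ and made \eqref{eq:rComponent0} factor into a product of two quintuple products. When $p \equiv 5 \mod{12}$ we have $p\equiv 2 \mod 3$. Then $3\nmid m$ and $3\nmid n$, since $m^2+n^2\equiv 2 \mod 3$, and $s\equiv -1 \mod 3$ since $ps \equiv -1 \mod 3$. So neither shift is a multiple of $y$, and no such factorization is available.

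Instead, we expand the right-hand side. Write $W(a,b;q)=\<a,b,ab,a/b;q\>_\infty/(q;q)_\infty^2$. For the relevant arguments, use the Winquist-type decomposition
\[
\<a,b,ab,a/b;q\>_\infty = \<a^3;q^3\>_\infty\<b^3\ \text{stuff}\>\cdots,
\]
that is, the identity expressing $\<a,b,ab,a/b;q\>_\infty/(q;q)^2_\infty$ as a signed sum of four products of two theta functions in $q^3$. Concretely, via the quintuple product $Q(z,q)=\<-qz^3;q^3\>_\infty - z\<-q^2z^3;q^3\>_\infty$ one has the classical formula
\[
W(a,b;q) = \<-qa^3;q^3\>_\infty\<-q b^3\,\cdot;q^3\>_\infty - \cdots ,
\]
namely the expansion $W(a,b;q)=\<a^{3},b^{3}\ \cdot\>$-type sum of four terms. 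The precise form I would use is the standard one,
\[
\<a,b,ab,a/b;q\>_\infty/(q;q)_\infty^{2}=\<a^3;q^3\>\text{-free form: } \sum \pm\, a^{i}b^{j}\<\cdot;q^3\>_\infty\<\cdot;q^3\>_\infty ,
\]
equivalently the factorization $W(a,b;q)\propto Q(a,q)Q(b,q)$-like pieces after substituting $a\to ab, a/b$. I would first establish this as a lemma, proved by direct coefficient comparison with the triple product. Substituting $q\to q^{p^2}$, $a=-q^{\xa/3}$ and $b=-q^{(\xb-nsp^2)/3}$ in part (1), every resulting theta function has base $q^{3p^2}=q^y$ and arguments of the form $-q^{\xa+\cdots}$ and $-q^{\xb+\cdots}$. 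The integrality conditions are $\xa\equiv 0 \mod 3$ and $\xb\equiv nsp^2 \mod 3$. Both reduce to $m\equiv -n \mod 3$, using $p\equiv 2 \mod 3$ and $s\equiv 2 \mod 3$; this explains the case split between parts (1) and (2).

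Next, match the four theta-pairs from the expanded $W$ with the four terms of \eqref{eq:rComponent0}. The exponents there differ by the shifts $-nsp^2$, $+msp^2$, $-msp^2$, which modulo $y$ are $\pm p^2$-type residues. Apply \eqref{eq:jtpShift} repeatedly, writing $ms=3M+c$ and $ns=3N+c'$ with the appropriate residues, to move each theta argument into the normalized form. Then verify that the accumulated exponents and signs agree with $\xc_{\xk}-\xg$ together with the exponents $\xd$, $\xi$ of Lemma \ref{lem:rComponent}. Part (2) follows by the symmetric computation with the roles of $\xa$ and $\xb$ (and of $m$ and $n$) exchanged, giving $\xc^{*}_k$.

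I expect the main obstacle to be pinning down the correct four-term $q^3$-dissection of the Winquist product and then carrying out the exponent bookkeeping. As in the proof of Theorem \ref{Qdissect1}, I would verify the polynomial identities in $\xk, b, m, n, p, s, M, N$ symbolically, using $m^2+n^2=p$ and $3m\mu=1+ps$.
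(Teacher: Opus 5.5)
\textbf{This has a genuine gap.} Your strategy is the paper's: compare $r$-components via Lemma~\ref{lem:rComponent}, expand $W$ by a four-term identity in base $q^{3}$, substitute $q\to q^{p^2}$, $a=-q^{A/3}$, $b=-q^{B/3}$, and align terms with \eqref{eq:jtpShift}. But the identity that carries the whole argument is never stated. Your displayed formulas are placeholders, and the hints around them point the wrong way.

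\begin{itemize}
\item There is no factorization of $W$ into quintuple-product pieces $Q(a,q)Q(b,q)$; already the zero sets in $b$ differ.
\item A dissection with quintuple-type arguments $-qa^{3}$ would turn $a=-q^{A/3}$ into $\langle q^{A+p^2};q^{y}\rangle_\infty$. That can never be brought by \eqref{eq:jtpShift} to the $-q^{(\cdot)}$ form of every argument in \eqref{eq:rComponent0}.
\end{itemize}

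What you need is Winquist's identity itself, which should be cited rather than re-derived by a ``direct coefficient comparison'':
\begin{multline*}
W(a,b;q)=\langle a^{3},b^{3}q;q^{3}\rangle_\infty-b\,\langle a^{3},b^{3}q^{2};q^{3}\rangle_\infty\\
-\tfrac{a}{b}\,\langle a^{3}q,b^{3};q^{3}\rangle_\infty+\tfrac{a^{2}}{b}\,\langle a^{3}q^{2},b^{3};q^{3}\rangle_\infty .
\end{multline*}
Under your substitution this gives, in base $q^{y}$, the four products $\langle -q^{A},-q^{B+p^2}\rangle_\infty$, $q^{B/3}\langle -q^{A},-q^{B+2p^2}\rangle_\infty$, $-q^{(A-B)/3}\langle -q^{A+p^2},-q^{B}\rangle_\infty$ and $-q^{(2A-B)/3}\langle -q^{A+2p^2},-q^{B}\rangle_\infty$. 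Only with these explicit arguments and prefactors does your ``bookkeeping'' step become something that can be checked.

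\textbf{Error in the residue of $s$.} From $ps=3m\mu-1\equiv -1$ and $p\equiv 2\pmod{3}$ one gets $s\equiv 1\pmod{3}$, not $s\equiv 2$; the latter is the value from the case $p\equiv 1\pmod{12}$. With $s\equiv 2$ you would have $\xb-nsp^{2}\equiv m\not\equiv 0\pmod{3}$, so the second argument of $W$ would never be integral. With $s\equiv 1$ one gets $\xa\equiv 2(m+n)$ and $\xb-nsp^{2}\equiv m+n$, so both conditions become $m\equiv -n\pmod{3}$, as you claimed.

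\textbf{How the matching then goes (as in the paper).}
\begin{enumerate}
\item Since $3\mid(m+n)s$, absorb $(m+n)sp^{2}=Ky$ in the last term of \eqref{eq:rComponent0}.
\item Take $A=\xa$ and $B=\xb-nsp^{2}$.
\item For $m\equiv 2$, $n\equiv 1$, write $ms=2+3M$ and $ns=1+3N$.
\end{enumerate}
Then the terms of \eqref{eq:rComponent0} with argument pairs $(\xa,\xb)$, $(\xa-nsp^{2},\xb+msp^{2})$, $(\xa-msp^{2},\xb-nsp^{2})$, $(\xa,\xb+msp^{2}-nsp^{2})$ reduce via \eqref{eq:jtpShift} to the Winquist terms with pairs $(A,B+p^{2})$, $(A+2p^{2},B)$, $(A+p^{2},B)$, $(A,B+2p^{2})$ respectively, with matching signs. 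Also $\xc_{\xk}=\xg_{\xk}-N(B+p^{2})-\xs_{N}$ simplifies to the stated formula. What remains is the symbolic check of the other three prefactors, as you anticipated.
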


A key ingredient in our proof of Theorem \ref{thm:QuintDiss(5mod12)} is \emph{Winquist's identity}, which we recall in the following lemma.

\begin{lemma}[Winquist \cite{W69}]
For complex $a$, $b$, and $q$, with $a,b \neq 0$ and $|q|<1$, one has
    \(\label{wineq}
    W(a,b;q) = 
        \<a^3,b^3q;q^3\>_{\infty} 
        - b\<a^3,b^3q^2;q^3\>_{\infty} 
        - \fr{a}{b}\<a^3q,b^3;q^3\>_{\infty} 
        + \fr{a^2}{b}\<a^3q^2,b^3;q^3\>_{\infty}.
    \)
\end{lemma}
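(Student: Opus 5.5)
We would prove \eqref{wineq} by the standard theta-function uniqueness argument, viewing both sides as functions of $a$ for fixed $b$ and $q$. Put $F(a) := \<a,b,ab,\tf{a}{b};q\>_{\infty}$ and let $G(a)$ be $(q;q)_{\infty}^{2}$ times the right-hand side of \eqref{wineq}. Both are analytic in $a \in \mathbb{C}\setminus\{0\}$, so each has a Laurent expansion $\sum_{j} c_{j} a^{j}$ with coefficients depending on $b$ and $q$.

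\textbf{Step 1: common functional equations.} From \eqref{eq:jtpShift} with $t=1$, i.e.\ $\<zq;q\>_{\infty} = -z^{-1}\<z;q\>_{\infty}$, the three factors of $F$ involving $a$ give
\[
F(aq) = -a^{-3}F(a).
\]
Inversion uses $\<z^{-1};q\>_{\infty} = -z^{-1}\<z;q\>_{\infty}$. Under $a \to a^{-1}$, the factor $\<ab\>$ becomes $\<b/a\> = -(b/a)\<a/b\>$ and the factor $\<a/b\>$ becomes $\<1/(ab)\> = -(ab)^{-1}\<ab\>$. Collecting the three signs and powers gives $F(a^{-1}) = -a^{-3}F(a)$, and also $F(1)=0$. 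We then check that $G$ satisfies the same two relations termwise. Under $a\to aq$ we have $a^{3}\to a^{3}q^{3}$, and each of the four triple products with base $q^{3}$ shifts by \eqref{eq:jtpShift}. The key point is that the prefactors $1, b, a/b, a^{2}/b$ reshuffle consistently: for instance $\<a^{3}q^{3}\> = -a^{-3}\<a^{3}\>$ directly, while $\<a^{3}q^{4};q^{3}\>$ must be rewritten as $\<a^{3}q;q^{3}\>$, which moves the term $-\fr{a}{b}\<a^{3}q,b^{3}\>$ into the slot of $\fr{a^{2}}{b}\<a^{3}q^{2},b^{3}\>$ and so on. Inversion $a\to a^{-1}$ similarly pairs the third and fourth terms and fixes the first two up to the required factor.

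\textbf{Step 2: dimension count.} The relation $f(aq)=-a^{-3}f(a)$ forces $c_{j+3} = -q^{j}c_{j}$ (up to the exact shift in the exponent), so $f$ is determined by $c_{0},c_{1},c_{2}$: the space of solutions is $3$-dimensional. Imposing $f(a^{-1}) = -a^{-3}f(a)$ relates $c_{j}$ to $c_{-3-j}$, which pairs $c_{1}$ with $c_{2}$ (through the recursion) and forces a linear condition on $c_{0}$. We expect this to cut the space down to dimension $1$; if it instead leaves dimension $2$, we will add a vanishing condition, e.g.\ at $a=b$ or $a=-1$, where $F$ visibly vanishes or simplifies. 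Hence $F = C(b,q)\,G$.

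\textbf{Step 3: the constant.} This is the main obstacle. First we apply the same reasoning in the variable $b$. Both sides are symmetric enough under $b\to b^{-1}$ and $b\to bq$, using the same triple-product shifts, that $C$ is a $q$-elliptic function of $b$ without poles, hence independent of $b$. To pin it down we would specialize $b$ to a convenient value, such as $b=-1$ or $b=a$, where $F$ reduces via the quintuple product \eqref{qpideq}: at $b=-1$, $\<a,-a\> $ combines as $\<a^{2};q^{2}\>$-type products. The corresponding specialization of $G$ should collapse, via $\<-z;q\>$ and $\<z;q\>$ relations and \eqref{qpideq} with base $q^{3}$, to the same expression. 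Comparing lowest-order coefficients in $q$ then gives $C=1$; the check at $q=0$ is immediate, since both sides reduce to $(1-a)(1-b)(1-ab)(1-a/b)$. Since the paper cites Winquist \cite{W69}, the delicate bookkeeping in Steps 1 and 3 is the only real work.
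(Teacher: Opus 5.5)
Your theta-function strategy is viable, but as written the decisive step is missing: Step 3 never pins down the constant. The paper gives no proof of its own here (it just cites Winquist), so your argument has to stand on its own.

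After Step 2 the factor $C$ may depend on $q$, and a priori on $b$. The check at $q=0$ only shows $C=1$ at $q=0$; it sees no higher-order term. So ``comparing lowest-order coefficients in $q$'' cannot give $C\equiv 1$.

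The specializations you name do not help. At $b=a$ both sides vanish identically: $\langle 1;q\rangle_\infty=0$ on the left, and the four terms on the right cancel in pairs. At $b=-1$ the hoped-for collapse is the identity
\[
  \frac{\langle a,-1,-a,-a;q\rangle_\infty}{(q;q)_\infty^{2}}
  = 2\langle a^{3},-q;q^{3}\rangle_\infty
  + \langle -1;q^{3}\rangle_\infty\big(a\langle a^{3}q;q^{3}\rangle_\infty - a^{2}\langle a^{3}q^{2};q^{3}\rangle_\infty\big).
\]
This is a nontrivial three-term relation of the same type as the lemma, and you leave it unproved. Separately, your claim that $C$ is pole-free in $b$ is unjustified: for $b\in q^{\mathbb{Z}}$ the function $G(\cdot,b)$ vanishes identically.

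The missing idea is to set one variable equal to a primitive cube root of unity $\omega$, where the right-hand side collapses to a single product. At $a=\omega$:
\begin{itemize}
\item the first two terms vanish because $\langle 1;q^{3}\rangle_\infty=0$;
\item since $\langle q;q^{3}\rangle_\infty=\langle q^{2};q^{3}\rangle_\infty=(q;q)_\infty$, the right-hand side equals $-\omega b^{-1}(1-\omega)(q;q)_\infty\langle b^{3};q^{3}\rangle_\infty$;
\item on the left, $\langle\omega;q\rangle_\infty=(1-\omega)(q^{3};q^{3})_\infty$, $\langle\omega/b;q\rangle_\infty=-\omega b^{-1}\langle\omega^{2}b;q\rangle_\infty$, and $\langle b,\omega b,\omega^{2}b;q\rangle_\infty=\langle b^{3};q^{3}\rangle_\infty(q;q)_\infty^{3}/(q^{3};q^{3})_\infty$;
\item hence $W(\omega,b;q)$ takes the same value, which is nonzero when $b^{3}\notin q^{3\mathbb{Z}}$.
\end{itemize}
Evaluating $F(\cdot,b)=C(b)G(\cdot,b)$ at $a=\omega$ then gives $C(b)=1$ for each such $b$ directly. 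You can drop the ellipticity-in-$b$ argument and its pole issue; continuity covers the remaining $b$.

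Two smaller corrections to Steps 1--2:
\begin{itemize}
\item Under $a\to aq$ there is no reshuffling. Each of the four terms is separately multiplied by $-a^{-3}$; only $a\to a^{-1}$ swaps the last two.
\item Inversion gives $c_j=-c_{3-j}$. This forces $c_2=-c_1$ but nothing on $c_0$, so the invariant space is two-dimensional, spanned by $\langle a^{3};q^{3}\rangle_\infty$ and $a\langle a^{3}q;q^{3}\rangle_\infty-a^{2}\langle a^{3}q^{2};q^{3}\rangle_\infty$.
\end{itemize}
Your fallback of imposing vanishing at $a=b$ is the right extra condition. It holds for both sides and cuts the space to dimension one when $b\notin q^{\mathbb{Z}}$. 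By contrast, $a=-1$ is not a zero of $F$ and would not help.
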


Fixing $0 \leq r \leq p-1$ and $3b\xk m \equiv r \mod{p}$, and letting $\xa=\xa_{\xk}$, $\xb=\xb_{\xk}$, etc., we recall from Lemma \ref{lem:rComponent} that the $r$-component of $Q(q^{bm},q^{p})Q(q^{bn},q^{p})$ is
    \(
    \label{eq:rComp(Recall)}
    \begin{aligned}
        &\sum_{t=-\infty}^{\infty} a_{pt+r}q^{pt+r} 
        = q^{\xg}\<-q^{\xa},-q^{\xb};q^y\>_{\infty}  
        - q^{\xg+\xd}\<-q^{\xa-nsp^{2}},-q^{\xb+msp^{2}};q^y\>_{\infty} \\
        &\qquad -q^{\xg+\xi}\<-q^{\xa-msp^{2}},-q^{\xb-nsp^{2}};q^y\>_{\infty}  
        + q^{\xg+\xd+\xi}\<-q^{\xa-(m+n)sp^{2}},-q^{\xb+msp^{2}-nsp^{2}};q^y\>_{\infty}.
    \end{aligned}
    \)
Next, for general $A$ and $B$ Winquist's identity \eqref{wineq} yields that
    \(
    \label{eq:WinqDemo}
    \begin{aligned}
    W(-q^{\fr{A}{3}},-q^{\fr{B}{3}};q^{p^2}) 
        &= \<-q^A,-q^{B+p^2};q^y\>_{\infty} + q^{\fr{B}{3}}\<-q^A,-q^{B+2p^2};q^y\>_{\infty} \\
        &\qquad -q^{\fr{A-B}{3}}\<-q^{A+p^2},-q^B;q^y\>_{\infty} - q^{\fr{2A-B}{3}}\<-q^{A+2p^2},-q^B;q^y\>_{\infty}. \\
    \end{aligned}
    \)
In the same spirit as the proof of Theorem \ref{Qdissect1}, our goal is to determine $A$ and $B$ to match \eqref{eq:rComp(Recall)} and \eqref{eq:WinqDemo} and have $A/3$ and $B/3$ be integral.

\begin{proof}[Proof of Theorem \ref{thm:QuintDiss(5mod12)}\,\ref{item:5(mod12)(1)}]
Suppose that $m \equiv -n \mod{3}$. Recalling that $3m\mu = 1 + ps$, as $p \equiv 2 \mod{3}$ it follows that $s \equiv 1 \mod{3}$. Then, let us further suppose\footnote{Compare with the relations $ms=2+3M$ and $ns=3N$ from section \ref{sec:1(mod12)}.}~that
    \begin{subequations}
    \begin{alignat}{3}
    \label{eq:m2(3)}
        m &\equiv 2 \mod{3} \qquad\text{and}\qquad &&ms &&= 2+3M, \\
    \label{eq:n1(3)}
        n &\equiv 1 \mod{3} \qquad\text{and}\qquad &&\hspace{0.33em}ns &&= 1+3N,
    \end{alignat}
and set
    \(
        (m+n)s = 3K,
    \)
    \end{subequations}
for integers $M$, $N$, and $K=M+N+1$.

With these assumptions and relation \eqref{eq:jtpShift}, the last term of \eqref{eq:rComp(Recall)} is
    \[
        \<-q^{\xa-Ky},-q^{\xb-nsp^{2}+msp^{2}};q^y\>_{\infty} 
        = q^{K(\xa-y)-\xs_K} \<-q^{\xa},-q^{\xb-nsp^2+msp^2};q^{y}\>_{\infty},
    \]
and we can rewrite the right-hand side of \eqref{eq:rComp(Recall)} as
    \(
    \label{eq:rComp}
    \begin{aligned}
        & q^{\xg}\<-q^{\xa},-q^{\xb};q^y\>_{\infty} 
        + q^{\xg+\xd+\xi+[K(\xa-y)-\xs_K]}\<-q^{\xa},-q^{\xb-nsp^{2}+msp^{2}};q^y\>_{\infty} \\
        &\qquad - q^{\xg+\xd}\<-q^{\xa-nsp^{2}},-q^{\xb+msp^{2}};q^y\>_{\infty} 
        - q^{\xg+\xi}\<-q^{\xa-msp^{2}},-q^{\xb-nsp^{2}};q^y\>_{\infty}.
    \end{aligned}
    \)
This latter expression is (somewhat) reminiscent of the right-hand side of \eqref{eq:WinqDemo}: 
in particular, we are motivated to set
    \[
        A := \xa \qquad\text{and}\qquad B := \xb - nsp^2
    \]
in \eqref{eq:rComp}, which changes said expression to
    \(
    \label{eq:rComp-2}
    \begin{aligned}
        & q^{\xg}\<-q^{A},-q^{B+nsp^2};q^y\>_{\infty} 
        + q^{\xg+\xd+\xi+[K(\xa-y)-\xs_K]} \<-q^{A},-q^{B+msp^2};q^{y}\>_{\infty} \\
        &\qquad -q^{\xg+\xd}\<-q^{A-nsp^2},-q^{B+Ky};q^{y}\>_{\infty} 
        - q^{\xg+\xi}\<-q^{A-msp^2},-q^{B};q^{y}\>_{\infty}.
    \end{aligned}
    \)
Now using \eqref{eq:m2(3)} and \eqref{eq:n1(3)}, we have
    \[
        q^{\xg}\<-q^{A},-q^{B+nsp^2};q^y\>_{\infty} 
            = q^{\xg+\lf[{-N(B+p^2)-\xs_{N}}\rh]} \<-q^{A},-q^{B+p^2};q^y\>_{\infty}
    \]
and
    \begin{multline}
        q^{\xg+\xd+\xi+\lf[K(\xa-y)\vphantom{{}^2}-\xs_{K}\rh]}\<-q^A,-q^{B+msp^2};q^y\>_{\infty} \\
        = q^{%
            \xg+\xd+\xi+\lf[K(\xa-y)\vphantom{{}^2}-\xs_{K}\rh]
            +\lf[{-M(B+2p^2)-\xs_{M}}\rh]%
            }%
            \<-q^A,-q^{B+2p^2};q^y\>_{\infty}.
        \notag
    \end{multline}
Comparing these with \eqref{eq:WinqDemo} and accounting for the $q^{\xc}$ in \eqref{eq:WinqDiss(m=-n(3))}, we initially ``define''
    \[
        \xc = \xc_{\xk} = \xg_{\xk}-N(B+p^2)-\xs_{N},
    \]
and are able to simplify this to the definition
    \(
    \label{eq:chi(k)}
        \xc_{\xk} = \xg_{k} + p(1-ns)\big({kn+\tfr{p+m-n(1+ps)}{6}}\big).
    \)
We note that formula \eqref{eq:chi(k)} makes it easy to see that $\xc_{\xk} \equiv \xg_{\xk} \equiv 3b\xk m \equiv r \mod{p}$.

Continuing with \eqref{eq:m2(3)}, \eqref{eq:n1(3)}, and \eqref{eq:chi(k)}, after some symbolic manipulations we find that the $r$-component \eqref{eq:rComp-2} is indeed
    \(
    \label{eq:rComp-3}
    \begin{aligned}
        &= q^{\xc}\<-q^A,-q^{B+p^2};q^y\>_{\infty} + q^{\xc+\fr{B}{3}}\<-q^{A},-q^{B+2p^2};q^y\>_{\infty} \\
            &\qquad - q^{\xc+\fr{A-B}{3}}\<-q^{A+p^2},-q^{B};q^y\>_{\infty} - q^{\xc+\fr{2A-B}{3}} \<-q^{A+2p^2},-q^B;q^y\>_{\infty}, \\
        &= q^{\xc}W(-q^{A/3},-q^{B/3},q^{p^2}),
    \end{aligned}
    \)
as desired.

If, in place of \eqref{eq:m2(3)} and \eqref{eq:n1(3)}, we assume that $m \equiv 1 \mod{3}$ and \mbox{$n \equiv 2\mod{3}$}, our derivations are nearly identical. Indeed, although our  initial ``definition'' of $\xc_{k}$ is different, we again obtain \eqref{eq:chi(k)} after some simplifications, and we again find \eqref{eq:rComp-2} and \eqref{eq:rComp-3} to be equal, completing the proof of part \ref{item:5(mod12)(1)} of the theorem.
\end{proof}

\begin{proof}[Proof of Theorem \ref{thm:QuintDiss(5mod12)}\,\ref{item:5(mod12)(2)}]
Now supposing that $m \equiv n \mod{3}$, let
    \begin{subequations}
    \begin{align}
    \label{eq:m1(m=n)}
        m &\equiv 1 \mod{3} \qquad\text{and}\qquad ms = 1+3M, \\
    \label{eq:n1(m=n)}
        n &\equiv 1 \mod{3} \qquad\text{and}\qquad ns = 1+3N,
    \end{align}
and let
    \[
        (m-n)s = 3K,
    \]
again for integers $M$, $N$, and $K=M-N$. 
    \end{subequations}
Let
    \[
        A := \xa_k - msp^2 \qquad\text{and}\qquad B:= \xb_k,
    \]
so that \eqref{eq:rComp(Recall)} becomes
    \(
    \label{eq:rComp(m=n)}
    \begin{aligned}
        & q^{\xg}\<-q^{A+msp^2},-q^{B};q^y\>_{\infty} 
        + q^{\xg+\xd+\xi}\<-q^{A-nsp^{2}},-q^{B+Ky};q^y\>_{\infty} \\ 
        &\qquad - q^{\xg+\xd}\<-q^{A+Ky},-q^{B+msp^{2}};q^y\>_{\infty} 
        - q^{\xg+\xi}\<-q^{A},-q^{B-nsp^{2}};q^y\>_{\infty}.
    \end{aligned}
    \)
This time, we find that
    \[
        q^{\xg}\<-q^{A+msp^2},-q^{B};q^y\>_{\infty} 
            = q^{\xg+\lf[{-M(A+p^2)-\xs_{M}}\rh]}\<-q^{A+p^2},-q^{B};q^y\>_{\infty}
    \]
and
    \begin{multline}
        q^{\xg+\xd+\xi}\<-q^{A-nsp^2},-q^{B+Ky};q^y\>_{\infty} \\
        = q^{%
            \xg+\xd+\xi+
            \lf[(N+1)(A+2p^2-y)-\xs_{N+1}\rh]+
            \lf[-KB-\xs_{K}\rh]%
        }
        \<-q^{A+2p^2},-q^{B};q^y\>_{\infty},
        \notag
    \end{multline}
and we are motivated to define 
    \[
        \xc_{\xk}^{*} = \xg_{\xk}-M(A+p^2)-\xs_{M}.
    \]
After expanding and simplifying, we find (cf.~\eqref{eq:chi(k)}) that
    \(
        \xc_{\xk}^{*} 
        = \xg_{\xk} + p(1-ms)\big({b+\xk m+\tfr{p-m(1+ps)-n}{6}}\big),
    \)
and ultimately that \eqref{eq:rComp(m=n)} is
    \begin{align*}
        &= q^{\xc_{\xk}^{*}}\<-q^{A+p^2},-q^{B};q^y\>_{\infty} + q^{\xc_{\xk}^{*}+\fr{A}{3}}\<-q^{A+2p^2},-q^{B};q^y\>_{\infty} \\
        &\qquad - q^{\xc_{\xk}^{*}+\fr{B-A}{3}}\<-q^{A},-q^{B+p^2};q^{y}\>_{\infty} 
        - q^{\xc_{\xk}^{*}+\fr{2B-A}{3}}\<-q^{A},-q^{B+2p^2};q^{y}\>_{\infty} \\
        &= q^{\xc_{\xk}^{*}}W(-q^{B/3},-q^{A/3};q^{p^2}), 
    \end{align*}
with $A = \xa_{\xk}-msp^2$ and $B = \xb_{\xk}$, noting the ordering of $A$ and $B$ here.

As in the case $m \equiv -n \mod{3}$, assuming that $m \equiv n \equiv 2 \mod{3}$ in place of \eqref{eq:m2(3)} and \eqref{eq:n1(3)} ultimately yields that \eqref{eq:rComp(m=n)} is equal to $q^{\xc_{\xk}^{*}}W(-q^{B/3},-q^{A/3},q^{p^2})$, and the proof of Theorem \ref{thm:QuintDiss(5mod12)} is complete.
\end{proof}

\begin{corollary}\label{Qdissectanvanc2}
    Let $p$, $m$, $n$ and $b$ be as in Theorem \ref{thm:QuintDiss(5mod12)}, and define $(a_t)_{t}$ via
        \(\label{p512cvaneq1}
            Q(q^{b m},q^p)Q(q^{b n},q^p)=\sum_{t=0}^{\infty} a_tq^t,
        \)
    with $a_{t} := 0$ for $t < 0$. In addition, let $m\bar{m} \equiv n\bar{n} \equiv 1 \mod{p}$, and let
        \[
            w \equiv \bar{2}(m+n) \pmod{p}.
        \]
    Then one has
        \[
            a_{pt+bw(1-3b\bar{m})} = a_{pt+bw(1-3b\bar{n})} = 0 \qquad (t \in \Z).
        \]
\end{corollary}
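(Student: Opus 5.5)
The plan mirrors the proof of Corollary \ref{Qdissectanvanc}, with Winquist's product in place of the product of two quintuple products. By Theorem \ref{thm:QuintDiss(5mod12)}, for each $0 \leq r \leq p-1$ the $r$-component of $Q(q^{bm},q^{p})Q(q^{bn},q^{p})$ is a single term. Let $\xk$ satisfy $3b\xk m \equiv r \mod{p}$. If $m \equiv -n \mod{3}$, the term is $q^{\xc_{\xk}}W(-q^{\xa_{\xk}/3},-q^{(\xb_{\xk}-nsp^2)/3};q^{p^2})$; if $m \equiv n \mod{3}$, it is $q^{\xc^{*}_{\xk}}W(-q^{\xb_{\xk}/3},-q^{(\xa_{\xk}-msp^2)/3};q^{p^2})$. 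Since
\[
W(a,b;q) = \<a,b,ab,a/b;q\>_{\infty}(q;q)_{\infty}^{-2},
\]
it suffices to show that for the two residues in question one of the four triple products vanishes. By Corollary \ref{cor:<qx,qy>=0}, applied with $-1$ in place of $+1$, $\<-q^{x};q^{y}\>_{\infty}$ never vanishes. The vanishing therefore has to come from $ab$ or $a/b$: here $a=-q^{A/3}$ and $b=-q^{B/3}$, so $ab = q^{(A+B)/3}$ and $a/b = q^{(A-B)/3}$ carry no sign. We need $(A+B)/3 \equiv 0$ or $(A-B)/3 \equiv 0 \mod{p^{2}}$, and then $\<q^{x};q^{p^2}\>_{\infty}=0$ by Corollary \ref{cor:<qx,qy>=0}.

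Take case \ref{item:5(mod12)(1)}, with $A=\xa_{\xk}$ and $B=\xb_{\xk}-nsp^{2}$. From \eqref{eq:a(k)} and \eqref{eq:b(k)},
\[
A \pm B = 3bp + 3\xk(m\pm n)p - \tfrac{(m+n)p}{2} \pm \tfrac{(m-n)p}{2} + \tfrac{3p^2}{2}(1\pm 1) \mp nsp^{2}.
\]
Dividing by $3p$, the condition $(A\pm B)/3 \equiv 0 \mod{p^2}$ becomes
\[
b + \xk(m\pm n) + c_{\pm} \equiv 0 \mod{p},
\]
where $c_{+} = -n/3 + p/2 - ns p/3$ and $c_{-} = -m/3 + ns p/3$, with the thirds interpreted through the integrality guaranteed by $s\equiv1 \mod{3}$ and the congruences on $m,n$. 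Modulo $p$ these reduce to the integer-free conditions $b + \xk(m+n) - n\bar 3 \equiv 0$ or $b + \xk(m-n) - m\bar 3 \equiv 0$, together with a mod-$3$ check that the exact quotient is an integer multiple of $p$. Solving for $\xk$ and substituting $r \equiv 3b\xk m$ should give the residues.

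For the plus sign, $\xk \equiv (n\bar3 - b)\overline{(m+n)}$, so $r \equiv bm(n-3b)\overline{(m+n)}$. Using $n^{2}\equiv -m^{2}$, we have $\overline{(m+n)} \equiv (m-n)\bar{2}\bar{m}^{2}$, hence
\[
r \equiv b(n-3b)(m-n)\bar2\bar m.
\]
The target is $bw(1-3b\bar m) = b(m+n)\bar2(m-3b)\bar m$. The two agree because $(n-3b)(m-n) \equiv (m+n)(m-3b)$ modulo $p$: expanding gives $nm - n^2 - 3bm + 3bn$ and $m^2 + mn - 3bm - 3bn$, which match once we use $-n^2 \equiv m^2$, provided the sign of the $3bn$ term is fixed. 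So the sign conventions need care, and in particular I may need the minus-sign choice instead. The $1-3b\bar n$ residue should come from the other choice of sign. Case \ref{item:5(mod12)(2)} is the same with the roles of $m$ and $n$ swapped; this matches the symmetry of the statement, since interchanging $m,n$ swaps $\bar m$ and $\bar n$.

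The main obstacle is bookkeeping. First, we must confirm that $(A\pm B)/(3p)$ is an integer, so that the exponent really is a multiple of $p^{2}$ and not just congruent to zero modulo $p$ after a formal division. Second, the dependence on $s$ must drop out modulo $p$. This works because $3ms \equiv -\,$(unit stuff) via $3m\mu = 1+ps$: terms like $nsp^{2}/(3p) = nsp/3$ vanish modulo $p$ once integrality is established. I would first check integrality mod $3$ separately in the subcases $m\equiv2$, $n\equiv1$ and $m\equiv1$, $n\equiv2 \mod{3}$, and then do the mod $p$ solve as above, checking the signs against the $p=17$ example (Example \ref{p=17vanex}).
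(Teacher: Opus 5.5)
Your strategy is the paper's: take the single $r$-component from Theorem \ref{thm:QuintDiss(5mod12)}, expand $W$ into four triple products, and make the $ab$ or $a/b$ factor equal to $\langle q^{x};q^{p^2}\rangle_\infty$ with $p^2\mid x$, which vanishes by Corollary \ref{cor:<qx,qy>=0}.

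The reduction is simpler than you fear. The quantities $A/3$ and $B/3$ are already integers, since they are the exponents in the theorem, and $p\mid A$, $p\mid B$. Hence $(A\pm B)/3\equiv 0 \pmod{p^2}$ if and only if $(A\pm B)/p\equiv 0\pmod p$. The $nsp^2$ term then disappears at once, no mod-$3$ check is needed, and you get exactly the paper's conditions $3b-n+3\kappa(m+n)\equiv 0$ and $3b-m+3\kappa(m-n)\equiv 0 \pmod p$.

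Some smaller points:
\begin{itemize}
\item Your $c_+$ should contain $p$ rather than $p/2$; this is harmless modulo $p$.
\item Nonvanishing of $\langle -q^{x};q^{y}\rangle_\infty$ does not follow from Corollary \ref{cor:<qx,qy>=0}, but you do not need it.
\item In the case $m\equiv n \pmod 3$ no interchange of $m$ and $n$ is involved, and such an interchange would not carry $m\equiv n$ to $m\equiv -n$ anyway. There the $ab$ and $a/b$ factors have exponents $(\alpha_\kappa+\beta_\kappa-msp^2)/3$ and $-(\alpha_\kappa-\beta_\kappa-msp^2)/3$, so the vanishing conditions are literally the same as in the case $m\equiv -n\pmod 3$.
\end{itemize}

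The step that actually breaks is the final residue matching. Your plus-sign residue $r\equiv b(n-3b)(m-n)\bar 2\bar m$ is right, but it is not $bw(1-3b\bar m)$. Your two expansions differ by $6bn$, and there is no sign to ``fix''. It equals $bw(1-3b\bar n)=b\bar 2(m+n)(n-3b)\bar n$, because $(m-n)\bar m\equiv (m+n)\bar n$ is equivalent to $m^2+n^2\equiv 0\pmod p$.

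The minus-sign condition gives $3\kappa\equiv (m-3b)\overline{(m-n)}$. Using $\overline{(m-n)}\equiv (m+n)\overline{2m^2}$, which follows from $m^2-n^2\equiv 2m^2$, one gets $r\equiv 3b\kappa m\equiv b\bar 2(m+n)(m-3b)\bar m=bw(1-3b\bar m)$.

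So the correspondence is the reverse of your guess: the $ab$ factor yields $bw(1-3b\bar n)$ and the $a/b$ factor yields $bw(1-3b\bar m)$, as in the paper. Because the corollary asserts both residues, the conclusion survives. As written, however, your argument rests on a false congruence and omits the second computation; with these two lines supplied it becomes the paper's proof.
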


\begin{proof}
    First let $m \equiv -n \mod{3}$ and $0 \leq r < p$. The $r$-component of $Q(q^{bm},q^{p})Q(q^{bn},q^{p})$ is 
        \(
        \label{eq:rComp(5mod12)(ForZero)}
        \begin{aligned}
            & q^{\xc_{\xk}}
            W\!\big({-q^{\xa_{\xk}/3},-q^{(\xb_{\xk}-nsp^2)/3};q^{p^2}}\big) \\
            &\qquad 
            = \fr{q^{\xc_{\xk}} \<-q^{\xa_{\xk}/3},-q^{(\xb_{\xk}-nsp^2)/3};q^{p^2}\>_{\infty}}{(q^{p^2};q^{p^2})_{\infty}^{2}} 
            \times \big\<{q^{(\xa_{\xk}+\xb_{\xk}-nsp^2)/3},q^{(\xa_{\xk}-\xb_{\xk}+nsp^2)/3};q^{p^2}}\big\>_{\infty},
        \end{aligned}
        \)
    where $3b\xk m \equiv r \mod{p}$. By Lemma \ref{cor:<qx,qy>=0}, this component will be identically zero if either 
        \[
            \xa_{\xk}+\xb_{\xk} \equiv 0 \mod{p^2} \qquad\text{or}\qquad
            \xa_{\xk}-\xb_{\xk} \equiv 0 \mod{p^2}.
        \]
    Expanding these congruences using the definitions of $\xa_{\xk}$ and $\xb_{\xk}$, we quickly find that said congruences are equivalent to
        \(
        \label{eq:expRels(5mod12)(m=-n(3))}
            3b-n + 3\xk(m+n) \equiv 0 \mod{p} \qquad\text{and}\qquad
            3b-m + 3\xk(m-n) \equiv 0 \mod{p},
        \)
    respectively. Because $p = m^2 + n^2$, it is not possible that $m \pm n \equiv 0 \mod{p}$, so we may solve these congruences for $\xk \mod{p}$. In particular, using the relations
        \[
            m\bar{n} \equiv -\bar{m}n \mod{p}, \quad
            \overline{m+n} \equiv \bar{2}(\bar{m}+\bar{n}) \mod{p}, 
            \quad\text{and}\quad
            \overline{m-n} \equiv \bar{2}(\bar{m}-\bar{n}) \mod{p},
        \]
    we rearrange \eqref{eq:expRels(5mod12)(m=-n(3))} to find that
        \[
            3\xk \equiv \bar{2}n(\bar{m}+\bar{n})(1-3b\bar{n}) \mod{p} \qquad\text{and}\qquad
            3\xk \equiv \bar{2}m(\bar{m}-\bar{n})(1-3b\bar{m}) \mod{p},
        \]
    respectively, which are equivalent to 
        \[
            3b\xk m \equiv \bar{2}b(m+n)(1-3b\bar{n}) \mod{p} \quad\text{and}\quad
            3b\xk m \equiv \bar{2}b(m+n)(1-3b\bar{m}) \mod{p},
        \]
    respectively. Thus, one has $a_{pt+r}=0$ for all $t$ when 
        \[
            r \equiv bw(1-3b\bar{n}) \mod{p} \qquad\text{or}\qquad
            r \equiv bw(1-3b\bar{m}) \mod{p},
        \]
    as claimed. The proof when $m \equiv n \mod{3}$ is nearly identical.
\end{proof}

\begin{example}\label{p=17vanex}
Let $p=17=4^2+1^2$ and $b=2$ in Corollary \ref{Qdissectanvanc2}. Then
    \[
        w \equiv 11 \mod{17}, \quad
        bw(1-3b\bar{m}) \equiv 6 \mod{17}, \quad\text{and}\quad
        bw(1-3b\bar{n}) \equiv 9 \mod{17},
    \]
so that
    \[
        a_{17t+6} = a_{17t+9} = 0 \qquad \text{for all $t$.}
    \]
\end{example}

\begin{corollary}\label{p17mod2c}
Maintaining the assumptions of Corollary \ref{Qdissectanvanc2}, again let
    \[
        w \equiv \bar{2}(m+n) \mod{p}.
    \] 
Then for all integers $t$, the quantities $a_{pt+bw}$ and $a_{pt+b(w-3b)}$ are even; i.e., one has
    \(\label{p512c2vaneq2}
        a_{pt+bw} \equiv a_{pt+b(w-3b)} \equiv 0 \pmod{2} \qquad (t \in \Z).
    \)
\end{corollary}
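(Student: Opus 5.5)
The plan is to read off the parity directly from the Winquist form of the $r$-components given by Theorem~\ref{thm:QuintDiss(5mod12)}. The key observation is that for the two residues $r\equiv b(w-3b)$ and $r\equiv bw$, one of the two ``base'' arguments $-q^{A/3}$, $-q^{B/3}$ of $W$ is $-q^{Xp^{2}}$ for an integer $X$. By \eqref{eq:jtpShift},
\[
\<-q^{Xp^2};q^{p^2}\>_\infty=q^{e}\<-1;q^{p^2}\>_\infty \qquad\text{for some integer } e,
\]
and
\[
\<-1;q^{p^2}\>_\infty=(-1,-q^{p^2},q^{p^2};q^{p^2})_\infty=2\,(-q^{p^2};q^{p^2})_\infty^{2}(q^{p^2};q^{p^2})_\infty .
\]
Hence the whole component is $2$ times a series with integer coefficients, which gives the claimed congruences.

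\emph{Step 1 (case $m\equiv -n \bmod 3$).} Fix $r$ and $\xk$ with $3b\xk m\equiv r \pmod p$. By \eqref{eq:WinqDiss(m=-n(3))} the $r$-component is $q^{\xc_\xk}W(-q^{\xa_\xk/3},-q^{(\xb_\xk-nsp^2)/3};q^{p^2})$, where both exponents are integers by the theorem. Write
\[
W=\frac{\<a,b;q^{p^2}\>_\infty}{(q^{p^2};q^{p^2})_\infty^{2}}\,\<ab,a/b;q^{p^2}\>_\infty .
\]

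\emph{Step 2 (the residue $r\equiv b(w-3b)$).} Here $6\xk m\equiv m+n-6b \pmod p$. Write $\xa_\xk/3=pX$ with
\[
X=b+\xk m+\tfrac{3p-m-n}{6}.
\]
Since $\xa_\xk/3$ is an integer divisible by $p$ (as $p\mid \xa_\xk$ and $3\nmid p$), $X$ is an integer. Moreover $6X\equiv 6b+6\xk m-m-n\equiv 0 \pmod p$, so $p\mid X$ and $\xa_\xk/3$ is a multiple of $p^2$. Then the factor $\<-q^{\xa_\xk/3};q^{p^2}\>_\infty$ becomes $q^{e}\<-1;q^{p^2}\>_\infty$. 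Its factor $(q^{p^2};q^{p^2})_\infty$ cancels one power of the denominator. The other denominator factor is absorbed by $\<-q^{(\xb_\xk-nsp^2)/3};q^{p^2}\>_\infty/(q^{p^2};q^{p^2})_\infty$, which is a product of $(\cdot;q^{p^2})_\infty$ factors with integer coefficients. What remains is $2$ times an integer-coefficient (Laurent) series, so $a_{pt+r}$ is even.

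\emph{Step 3 (the residue $r\equiv bw$).} Now $6\xk m\equiv m+n$, so, using $n^2\equiv -m^2$,
\[
6\xk n\equiv \bar m n(m+n)\equiv n-m \pmod p .
\]
Write $(\xb_\xk-nsp^2)/3=pY$ with
\[
Y=\xk n+\tfrac{m-n+3p}{6}-\tfrac{nsp}{3}.
\]
Again $Y$ is an integer, and $6Y\equiv 6\xk n+m-n\equiv 0 \pmod p$, so the second argument is $-q^{Yp^2}$ with $p\mid Y$ absorbed into the exponent. The same argument as in Step 2 then applies to the factor $\<-q^{(\xb_\xk-nsp^2)/3};q^{p^2}\>_\infty$.

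\emph{Step 4 (case $m\equiv n \bmod 3$).} Repeat Steps 1--3 with \eqref{eq:WinqDiss(m=n(3))}, where the arguments are $-q^{\xb_\xk/3}$ and $-q^{(\xa_\xk-msp^2)/3}$. The same two congruences $6\xk m\equiv m+n-6b$ and $6\xk n\equiv n-m$ make the respective exponents multiples of $p^2$. The $msp^2/3$ term is a multiple of $p^2$ only up to the factor $\tfrac{ms}{3}$, so divisibility by $p$ must be checked as in Step 3.

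The main obstacle is this integrality bookkeeping in Steps 3 and 4: confirming that $Y$ (and its analogue with $ms$) is genuinely an integer before reducing modulo $p$. I would settle it by noting that the theorem already guarantees the exponent is an integer and that this exponent is divisible by $p$. Everything else is just the factor $2$ in $\<-1;q\>_\infty$.
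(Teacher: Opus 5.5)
Your proposal is correct and is essentially the paper's proof. There too one observes that for $r\equiv b(w-3b)$ (resp.\ $r\equiv bw$), the congruence $6b+6\xk m-(m+n)\equiv 0$ (resp.\ $6\xk n+m-n\equiv 0$) modulo $p$ makes $\xa_{\xk}$ (resp.\ $\xb_{\xk}-nsp^2$) vanish modulo $p^2$, so the corresponding triple-product factor of $W$ contains $1-(-q^{0})=2$.

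Your version is somewhat more careful than the paper's, since you use \eqref{eq:jtpShift} explicitly and check that the cofactor has integer coefficients. Two small points. In Step~3 the argument is $-q^{pY}=-q^{(Y/p)p^2}$, not $-q^{Yp^2}$. And the integrality of $Y$ can be checked directly: since $s\equiv 1 \pmod 3$, one has $ns\equiv n \pmod 3$, so there is no need to appeal to the theorem.
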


\begin{proof}
The proof is similar to that of the previous corollary.
Letting \mbox{$m \equiv -n \mod{3}$} and fixing $0 \leq r \leq p-1$, the $r$-component of \eqref{eq:WinqDiss(m=-n(3))} has factors
    \(\label{p512c2vaneq3}
        \<-q^{\xa_{\xk}/3};q^{p^2}\>_{\infty} \quad\text{and}\quad \<-q^{(\xb_{\xk}-nsp^2)/3};q^{p^2}\>_{\infty}
    \)
from $W(-q^{\xa_{\xk}/3},-q^{(\xb_{\xk}-nsp^2)/3},q^{p^2})$, where again $3b\xk m \equiv r \mod{p}$.

If $\xa_{\xk} \equiv 0 \mod{p^2}$ or $\xb_{\xk} - nsp^2 \equiv 0 \mod{p^2}$, then the corresponding triple product in \eqref{p512c2vaneq3} has a factor $1-(-q^0) = 2$. It is easily seen that these are equivalent to 
    \[
        6b+6\xk m-(m+n) \equiv 0 \pmod{p} \qquad\text{and}\qquad 6\xk n + m-n \equiv 0 \mod{p},
    \]
respectively, which hold when
    \[
        3b\xk m \equiv b(w-3b) \mod{p} \qquad\text{and}\qquad 3b\xk m \equiv w \mod{p},
    \]
respectively. Thus, \eqref{p512c2vaneq2} holds when $m\equiv-n\mod{3}$; the proof when $m \equiv n \mod{3}$ follows a nearly identical argument.
\end{proof}

\begin{example}\label{p=17mod2ex}
Let $p = 17 = 4^2 + 1^2$ and $b=2$ in Corollary \ref{p17mod2c}. Then
    \[
        w \equiv 11 \mod{17}, \quad
        bw \equiv 5\mod{17}, \quad \text{and}\quad
        b(w-3b) \equiv 10\mod{17},
    \]
whereby
    \[
        a_{17t+5} \equiv a_{17t+10} \equiv 0 \pmod{2} \qquad (t \in \Z).
    \]
Indeed, this is reflected in the computations 
    \begin{align*}
    (a_{17t+5})_{t \geq 0} & =(\text{0, 0, 0, 0, $-2$, $-4$, $-8$, $-16$, $-28$, $-48$, $-82$,} \ldots), \\
    (a_{17t+10})_{t \geq 0} & = (\text{$2$, $4$, $10$, $20$, $40$, $72$, $130$, $220$, $368$, $594$, $948$,}\ldots).
    \end{align*}
\end{example}

\subsection{Sign patterns when \tops{$p\equiv 5\mod{12}$}{p≡5(mod 12)}}

{ % renewing \. spacing
\renewcommand{\.}{\mspace{0.9mu}}

\begin{corollary}\label{Qdissectanvanceqsign2}
    Let  $p$, $m$, $n$, $b$ and $s$ as in Theorem \ref{thm:QuintDiss(5mod12)},
    and define $(b_t)_{t}$ via
        \(\label{Qdissectc2eq1p5mod12}
            \frac{Q(q^{bm},q^{p})Q(q^{bn},q^p)}{(q^p;q^p)_{\infty}^2}=\sum_{t=-\infty}^{\infty}b_{t}q^{t}.
        \)
Fix $0 \leq r \leq p-1$ and let $0 \leq \xk \leq p-1$ be such that $3b\xk m\equiv r\pmod{p}$. 

If $m \equiv -n \mod{3}$ let
    \begin{align*}
        h &= \fr{\xa_{\xk}+\xb_{\xk}-nsp^2}{3p} = b + \xk(m+n) + p - \tf{1}{3}n(1+ps), \\
        h\.' &= \fr{\xa_{\xk}-\xb_{\xk}+nsp^2}{3p} = b + \xk(m-n) - \tf{1}{3}(m+n) + \tf{1}{3}n(1+ps),
    \end{align*}
and if $m \equiv n \mod{3}$ let
    \begin{align*}
        h &= \fr{\xb_{\xk} + \xa_{\xk} - msp^{2}}{3p} = b + \xk(m+n) + p + \tf{1}{3}(m-n) - \tf{1}{3}m(1+ps),\\
        h' &= \fr{\xb_{\xk} - \xa_{\xk} + msp^{2}}{3p} = -b - \xk(m-n) + \tf{1}{3}m(1+ps).
    \end{align*}
In addition, define $k$ and $\l$ so that
    \[
        h = kp+\l \qquad \text{with $0 \leq \l < p$},
    \]
and similarly define $k'$ and $\l\.'$. Then either $b_{pt+r} = 0$ for all $t$, or one has
    \[
        \sgn b_{pt+r} = (-1)^{k+k'} \qquad \text{for all $t > t_{0}(r)$}.
    \]
\end{corollary}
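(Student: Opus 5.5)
The plan is to read off each $r$-component of \eqref{Qdissectc2eq1p5mod12} from Theorem \ref{thm:QuintDiss(5mod12)} and then factor it into a sign, a power of $q$, and a series whose coefficients are eventually positive. This parallels the proof of Corollary \ref{Qdissectanvanceqsign1}. I treat the case $m\equiv -n \pmod 3$; the case $m\equiv n\pmod 3$ is identical after swapping the two arguments of $W$ and using the $h,h'$ given for that case.

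\textbf{Step 1: factor the $r$-component.} Fix $r$ and $\xk$ with $3b\xk m\equiv r \pmod p$. By \eqref{eq:WinqDiss(m=-n(3))} and the definition of $W$, exactly as in \eqref{eq:rComp(5mod12)(ForZero)}, the $r$-component of \eqref{Qdissectc2eq1p5mod12} is
\[
\frac{q^{\xc_{\xk}}}{(q^p;q^p)_\infty^2(q^{p^2};q^{p^2})_\infty^2}\,
\big\<-q^{\xa_{\xk}/3},-q^{(\xb_{\xk}-nsp^2)/3};q^{p^2}\big\>_\infty\,
\big\<q^{ph},q^{ph'};q^{p^2}\big\>_\infty .
\]
Here the product of the two arguments of $W$ equals $q^{ph}$ and their quotient equals $q^{ph'}$, and the minus signs cancel in both, so these two factors carry no sign.

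\textbf{Step 2: reduce the unsigned factors.} Apply \eqref{eq:jtpShift} with $h=kp+\l$ and $h'=k'p+\l'$. This gives $\<q^{ph};q^{p^2}\>_\infty=(-1)^k q^{(\cdot)}\<q^{p\l};q^{p^2}\>_\infty$, and likewise for $h'$. If $\l=0$ or $\l'=0$, Corollary \ref{cor:<qx,qy>=0} shows that the whole component vanishes; this is the alternative ``$b_{pt+r}=0$ for all $t$''.

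\textbf{Step 3: show positivity when $\l,\l'\neq 0$.} Put $x=q^p$. Each $\<-z;x^p\>_\infty=(-z,-x^p/z;x^p)_\infty(x^p;x^p)_\infty$ with $z$ a power of $x$. The factors $(-z,-x^p/z;x^p)_\infty$ have nonnegative coefficients, with some care if $z$ is a nonpositive power; there the shift relation again only introduces a harmless factor $(1+1)$ or a monomial with positive sign, since the argument is $-q^{\cdot}$. Likewise $\<x^{\l};x^p\>_\infty=(x^{\l},x^{p-\l};x^p)_\infty(x^p;x^p)_\infty$. Collect the four factors $(x^p;x^p)_\infty$ from the numerator against $(x^p;x^p)_\infty^2$ and $(x;x)_\infty^2$ in the denominator. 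The unsigned part then becomes
\[
(-z_1,-x^p/z_1,-z_2,-x^p/z_2;x^p)_\infty\cdot\frac{\<x^{\l};x^p\>_\infty}{(x;x)_\infty}\cdot\frac{\<x^{\l'};x^p\>_\infty}{(x;x)_\infty},
\]
where $z_1,z_2$ are the powers of $x$ coming from the two arguments of $W$. Each quotient $\<x^{\l};x^p\>_\infty/(x;x)_\infty$ is $\prod_{j\not\equiv 0,\pm\l}(x^j;x^p)_\infty^{-1}$, which has nonnegative coefficients. The whole product therefore has nonnegative coefficients, and the sign of the component is $(-1)^{k+k'}$.

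\textbf{Main obstacle: eventual strict positivity.} Lemma \ref{lem:PosCoeffs} needs $p>10$, so it does not directly cover $p=5$. For that case I would argue as in Lemma \ref{lem:PosCoeffs}: it suffices that the residues $\{1,\dots,p-1\}\setminus\{\pm\l\}$ contain two coprime values $x,y$, since then $(1-x^{x})^{-1}(1-x^{y})^{-1}$ already has positive coefficients beyond $xy-x-y$. For $p=5$ the remaining set is $\{2,3\}$ or $\{1,4\}$, and both contain a coprime pair. For $p\ge 13$ Lemma \ref{lem:PosCoeffs} applies directly. Multiplying an eventually positive series by series with nonnegative coefficients and constant term $1$ preserves eventual positivity. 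The leftover factors $(-z,-x^p/z)$ have constant term positive after the shift normalization. Hence $\sgn b_{pt+r}=(-1)^{k+k'}$ for $t>t_0(r)$.
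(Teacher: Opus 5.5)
Your proposal is correct and takes the same route as the paper. You factor the $r$-component through $W$ and note that the product and quotient of the two arguments are $q^{ph}$ and $q^{ph'}$. You then extract the sign $(-1)^{k+k'}$ via \eqref{eq:jtpShift} and obtain eventual positivity from $\<q^{p\l};q^{p^2}\>_\infty/(q^p;q^p)_\infty=\prod_{j\not\equiv 0,\pm\l}(q^{pj};q^{p^2})_\infty^{-1}$, as in Lemma~\ref{lem:PosCoeffs}. Your explicit treatment of the factors $\<-q^{c};q^{p^2}\>_\infty$ and of $p=5$ fills in details the paper leaves implicit. Strictly, for $p\ge 17$ it is the argument of Lemma~\ref{lem:PosCoeffs} that applies, not the lemma itself, since here only the residues $\pm\l$ are removed.
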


\begin{proof}
    Suppose that $m \equiv -n \mod{3}$. Then the $r$-component of $Q(q^{bm},q^{p})Q(q^{bn},q^{p})$ is
        \(
        \label{eq:rComp(5mod12)(ForSigns)}
        \begin{aligned}
            & q^{\xc_{\xk}}W(-q^{\xa_{\xk}/3},-q^{(\xb_{\xk}-nsp^2)/3},q^{p^2}) \\
            &\qquad 
            = \fr{q^{\xc_{\xk}} \<-q^{\xa_{\xk}/3},-q^{(\xb_{\xk}-nsp^2)/3};q^{p^2}\>_{\infty}}{(q^{p^2};q^{p^2})_{\infty}^{2}} \times \big\<{q^{(\xa_{\xk}+\xb_{\xk}-nsp^2)/3},q^{(\xa_{\xk}-\xb_{\xk}+nsp^2)/3};q^{p^2}}\big\>_{\infty},
        \end{aligned}
        \)
    where $3b\xk m \equiv r \mod{p}$. Letting
        \begin{align*}
            h &= \fr{\xa_{\xk}+\xb_{\xk}-nsp^2}{3p} = b + \xk(m+n) + p - \fr{n(1+ps)}{3}, \\
            h' &= \fr{\xa_{\xk}-\xb_{\xk}+nsp^2}{3p} = b + \xk(m-n) - \fr{m+n}{3} + \fr{n(1+ps)}{3},
        \end{align*}
    we have
        \(
        \label{eq:temp}
            \big\<{q^{(\xa_{\xk}+\xb_{\xk}-nsp^2)/3},q^{(\xa_{\xk}-\xb_{\xk}+nsp^2)/3};q^{p^2}}\big\>_{\infty} 
            = \<q^{ph},q^{ph'};q^{p^2}\>_{\infty}.  
        \)
    Defining $k$ and $\l$ as above, from \eqref{eq:jtpShift} we have
        \[
            \< q^{h};q^{p} \>_{\infty}
            = (-1)^{k} 
            q^{%
            -k\l-\fr{k(k-1)p}{2}
            }
            \< q^{\l};q^{p} \>_{\infty}.
        \]
    If $\l=0$ then \eqref{eq:temp} (and consequently \eqref{eq:rComp(5mod12)(ForSigns)}) is zero, as are all $b_{pt+r}$, and if $\l\neq 0$ then 
        \[
            \<q^{\l};q^{p}\>_{\infty}(q;q)^{-1}_{\infty} = \prod_{\substack{a=1\\[0.1em] a \not\equiv \pm\l \mod{p}}}^{p-1} (q^{a};q^{p})_{\infty}^{-1}.
        \]
    Writing a similar formula for $\<q^{\l\.'};q^{p}\>_{\infty}$, from this point the result follows from arguments like those in Lemma \ref{lem:PosCoeffs} and Corollary \ref{Qdissectanvanceqsign1}; the proof when $m \equiv n \mod{3}$ is similar.
\end{proof}

\begin{example}\label{p17signpat}
For $p=17=4^{2}+1^{2}$ and $b=2$, the signs of $b_{17t+r}$ for $0 \leq r < 17$ and $t > t_{0}(r)$ are given in the following table.
    \[
    \begin{array}{c|ccccccccccccccccc}
        r& 0 & 1 & 2 & 3 & 4 & 5 & 6 & 7 & 8 & 9 & 10 & 11 & 12 & 13 & 14 & 15 & 16 \\
        \hline
        \operatorname{sgn} b_{17t+r} & + & - & - & + & + & - & 0 & - & - & 0 & + & + & - & - & + & - & +  \\
    \end{array}
    \]
We note that the entries of ``$0$'' for $r=6$ and $r=9$ are consistent with Example \ref{p=17vanex}.

\end{example}

} % end renew \.

%%%%%%%%%%%%%%%%%%%%%%%%%%%%%%%%%%%%%%%%%%
\section{Combinatorial interpretations} 
%%%%%%%%%%%%%%%%%%%%%%%%%%%%%%%%%%%%%%%%%%
\label{sec:combinatorial}

Here we give two examples of how to interpret our dissection results combinatorially. For any $S \subset \N$ and any $n > 0$, let $D_S(n)$ denote the number of even-length, distinct-part partitions using only parts from $S$, minus the number of odd-length, distinct-part partitions using parts from $S$; in addition, set $D_S(0)=1$. 

\begin{example}\label{p13ex}
 If we set $p=13=2^{2}+3^{2}$ and $b=1$ in Theorem \ref{Qdissect1}, then after removing any negative exponents we get 
    \(
    \label{p-13partitionseq1}
    \begin{aligned}
    & Q(q^2,q^{13})Q(q^3,q^{13})
    = Q(q^{26},q^{169})^{2}
    - q^{27}Q(q^{52},q^{169})Q(q^{78},q^{169})
    - q^2Q(q^{39},q^{169})^{2} \\
    &\qquad\qquad%
    - q^{3}Q(q^{13},q^{169})Q(q^{39},q^{169}) 
    + q^{17}Q(q^{52},q^{169})Q(q^{65},q^{169}) \\
    &\qquad\qquad% 
    + q^5Q(q^{26},q^{169})Q(q^{52},q^{169})
    + q^{19}Q(1,q^{169})Q(q^{65},q^{169}) \\
    &\qquad\qquad%
    - q^7Q(q^{13},q^{169})Q(q^{52},q^{169})
    - q^{34}Q(q^{65},q^{169})Q(q^{78},q^{169}) \\
    &\qquad\qquad%
    + q^9Q(1,q^{169})Q(q^{13},q^{169})
    - q^{23}Q(q^{39},q^{169})Q(q^{78},q^{169}) \\
    &\qquad\qquad%
    - q^{24}Q(q^{13},q^{169})Q(q^{78},q^{169})
    + q^{12}Q(q^{26},q^{169})Q(q^{65},q^{169}).
    \end{aligned}
    \)
As has already been shown, the $r=6$ and $r=9$ components of the above are zero, due in both cases to their factors of $Q(1,q^{169})$. Here we give a combinatorial interpretation of these identically zero components, and we relate the nonzero components of the dissection to the corresponding pieces in the series on the left of \eqref{p-13partitionseq1} in a combinatorial way.
 
If both sides of \eqref{p-13partitionseq1} are divided by $(q^{13};q^{13})_{\infty}^2$, then the left side becomes 
    \(\label{p-13partitionseq2}
        \frac{ Q(q^2,q^{13})Q(q^3,q^{13}) }{ (q^{13};q^{13})_{\infty}^2 } =
        (q^2,q^3,q^{10},q^{11};q^{13})_{\infty}(q^7,q^9,q^{17},q^{19};q^{26})_{\infty} =
        \sum_{k=0}^{\infty} D_A(k)q^k,
    \)
where 
    \[
        A=\{m \in \N : \, m \equiv \pm 2,\pm 3,\pm 7,\pm 9,\pm 10,\pm 11 \mod{26} \}.
    \]
Thus we have that $D_A(13k+6)=D_A(13k+9)=0$ for all $t \geq 0$. As an example, for $k = 74 = 9+5(13)$ there are 158 even-length, distinct-part partitions of 74 using parts from $A$, and 158 odd-length, distinct-part partitions of 74 using parts from $A$.

On the other hand, if we consider the part of the dissection containing powers of $q$ with exponents congruent to $5 \pmod{13}$, then 
\[
    \sum_{k=0}^{\infty} D_A(13k+5)q^{13k+5} = \frac{q^5 Q(q^{26},q^{169}) Q(q^{52},q^{169})}{(q^{13};q^{13})_{\infty}^2},
\]
whereby
    \begin{align*}
        & \sum_{k=0}^{\infty} D_A(13k+5)q^{k} \\
        &\qquad = \frac{(q^2,q^{11},q^{13};q^{13})_{\infty} (q^9,q^{17};q^{26}){}_{\infty}}{(q;q)_{\infty}} \times 
        \frac{(q^4,q^9,q^{13};q^{13})_{\infty}  (q^5,q^{21};q^{26})_{\infty}}{(q;q)_{\infty}} \\
        &\qquad = \sum_{k=0}^{\infty}p_{13,5}(k);
    \end{align*}
here
    \[
        p_{13,5}(k) := \#\{(\pi_1,\pi_2):\,\pi_1 \in \mathcal{B}, \pi_2\in \mathcal{C}, |\pi_1|+|\pi_2|=k \},
    \]
where $\mathcal{B}$ and $\mathcal{C}$ are the sets of partitions with parts in $B$ and $C$, respectively,
    \begin{align*}
        B &=\{m \in \N :\, m \not\equiv 0,\pm 2,\pm 9,\pm 11,13 \pmod{26}, \\
        C &  =\{m \in \N :\, m \not\equiv 0,\pm 4,\pm 5,\pm 9,13 \pmod{26} \},
    \end{align*}
and $|\pi_i|$ denotes the sum of the parts in the partition $\pi_i$. Note that $\pi=\{\,\}$ is allowed, in which case $|\pi|=0$.

For example, consider $k=96=13(7)+5$. There are 609 partitions of 96 into an even  number of distinct parts from $A$, 547 partitions of 96 into an odd  number of distinct parts from $A$, so that $D_A(96)=609-547=62$, and there are also 62 bi-partitions/partition pairs $(\pi_1,\pi_2)$, with $\pi_2 \in \mathcal{B}$, $\pi_2\in \mathcal{C}$, and $|\pi_1|+|\pi_2|=7$.
\end{example}

\begin{example}\label{p17ex}
Setting $p = 17 = 4^2 + 1^1$ and $b=2$ in Theorem \ref{thm:QuintDiss(5mod12)}, then after removing any negative exponents we get 
    \begin{multline}
    \label{p-17partitionseq1}
        Q(q^2,q^{17})Q(q^8,q^{17}) = 
            W(-q^{136},-q^{68},q^{289}) 
                -q W(-q^{119},-q^{51},q^{289}) \\
            -q^2 W(-q^{119},-q^{34},q^{289})
                +q^3 W(-q^{119},-q^{68},q^{289})
                +q^{21} W(-q^{85},-q^{17},q^{289}) \\
            -q^{73} W(-q^{34},-1,q^{289})
                +q^{23} W(-q^{85},-q^{85},q^{289})
                -q^{24} W(-q^{136},-q^{119},q^{289}) \\
            -q^8 W(-q^{102},-q^{34},q^{289})
                -q^{43} W(-q^{51},-q^{51},q^{289})
                +q^{10} W(-q^{136},-1,q^{289}) \\
            +q^{28} W(-q^{68},-q^{51},q^{289})
                -q^{12} W(-q^{102},-q^{17},q^{289})
                -q^{13} W(-q^{136},-q^{102},q^{289}) \\
            +q^{14} W(-q^{102},-q^{85},q^{289})
                -q^{66} W(-q^{34},-q^{17},q^{289})
                +q^{33} W(-q^{68},-q^{17},q^{289}).
    \end{multline}
It was shown in Corollary \ref{p=17vanex} that the coefficients with indices $6$ and $9$ modulo 17 are zero, and this can be seen from \eqref{p-17partitionseq1} as well, since $W(x,x,q^k)=0$ for any $x$ and any $k>0$. Here the combinatorial interpretation of the vanishing coefficients is similar to that given in Example \ref{p13ex}, but the interpretation arising from equating the non-zero components of the dissection to the corresponding pieces in the series on the left of \eqref{p-17partitionseq1} is different.
 
If both sides of \eqref{p-17partitionseq1} are divided by $(q^{17};q^{17})_{\infty}^2$, then the left side becomes 
    \(\label{p-17partitionseq2}
        \fr{ Q(q^2,q^{17})Q(q^8,q^{17}) }{ (q^{17};q^{17})_{\infty}^2 } =
        (q^2,q^8,q^9,q^{15};q^{17})_{\infty} (q,q^{13},q^{21},q^{33};q^{34})_{\infty} = \sum_{k=0}^{\infty} D_A(k)q^k,
    \)
where 
    \[
        A = \{m \in \N :\, m \equiv \pm 1,\pm 2,\pm 8,\pm 9,\pm 13,\pm 15 \pmod{34} \}.
    \]
Thus we have that $D_A(17k+6)=D_A(17k+9)=0$ for all $k \geq 0$. For example for $k = 77 = 17(4)+9$, there are 56 partitions of 77 into an even  number of distinct parts from $A$, and  56 partitions of 77 into an odd  number of distinct parts from $A$.

On the other hand if we consider the part of the dissection containing powers of $q$ with exponent congruent to $2$ modulo 17, then 
    \[
        \sum_{k=0}^{\infty} D_A(17k+2)q^{17k+2} = \fr{-q^2 W(-q^{119},-q^{34},q^{289})}{(q^{17};q^{17})_{\infty}^2},
    \]
whereby
    \begin{align*}
    & \sum_{k=0}^{\infty} D_A(17k+2)q^{k} \\
    &\qquad = -(-q^2,-q^7,-q^{10},-q^{15};q^{17})_{\infty}
        \fr{(q^5,q^{12},q^{17};q^{17})_{\infty}}{(q;q)_{\infty}}
        \times \fr{(q^8,q^9,q^{17};q^{17})_{\infty}}{(q;q)_{\infty}} \\
    &\qquad = -\sum_{k=0}^{\infty} p_{17,2}(k);
    \end{align*}
here
    \[
        p_{17,2}(k) := \#\{(\pi_1,\pi_2,\pi_3):\,\pi_2 \in \mathcal{B}, \pi_2\in \mathcal{C}, \pi_3\in \mathcal{D},|\pi_1|+|\pi_2|+|\pi_3|=k \},
    \]
where $\mathcal{B}$ is the set of partitions with distinct parts in $B$, $\mathcal{C}$ and $\mathcal{D}$ are the sets of partitions with parts in $C$ and $D$, respectively, and 
    \begin{align*}
        B &= \{m \in \N :\, m \equiv \pm 2,\pm 7 \pmod{17} \}, \\
        C &= \{m \in \N :\, m \not\equiv 0,\pm 5 \pmod{17} \}, \\
        D &= \{m \in \N :\, m \not\equiv 0,\pm 8 \pmod{17} \}.
    \end{align*}
Again we again allow that $\pi=\{\,\}$ with $|\{\,\}|=0$.

For example, consider $k=189=17(11)+2$. There are 5013 partitions of 189 into an even  number of distinct parts from $A$, 5989 partitions of 189 into an odd  number of distinct parts from $A$, so that $D_A(189)=5013-5989=-976$, and there are also 976 partition triples $(\pi_1,\pi_2, \pi_3)$, with $\pi_1 \in \mathcal{B}$, $\pi_2\in \mathcal{C}$, $\pi_3\in \mathcal{D}$ and $|\pi_1|+|\pi_2|+|\pi_3|=11$.
\end{example}

%%%%%%%%%%%%%%%%%%%%%%%%%%%%%%%%%%%%%%%%%%
\section{Concluding remarks} \label{sec:conclusions}
%%%%%%%%%%%%%%%%%%%%%%%%%%%%%%%%%%%%%%%%%%

This paper was motivated by the experimental discovery of vanishing coefficients in products $Q(q^r,q^p)Q(q^s,Q^p)$ with $r, s > 0$ and $p$ a prime equivalent to 1 modulo 4. Further experiment suggests that some products $Q(q^r,q^p)Q(q^s,Q^p)Q(q^t,q^p)$ also exhibit vanishing coefficients in progressions modulo $p$; akin to the example in the introduction, for the sequence $(a_k)_{k \in \Z}$ defined via
    \[
        Q(q,q^{13})Q(q^3,q^{13})Q(q^4,q^{13}) = \sum_{k=0}^{\infty} a_k q^k,
    \]
one has
    \[
        a_{13k+2} = a_{13k+4} = a_{13k+10} = 0 \qquad
        \text{for all $k$}.
    \]
Furthermore, it appears that in the case of products $Q(q^i,q^{p})Q(q^j,q^{p})Q(q^k,q^{p})$, this phenomenon of vanishing coefficients is not restricted only those $p \equiv 1 \pmod{4}$; indeed, experiment also suggests that if 
    \[
        Q(q^2,q^{19})Q(q^3,q^{19})Q(q^5,q^{19}) = \sum_{k=0}^{\infty} b_k q^k,
    \]
then 
    \[
        b_{19k+4}=b_{19k+5}=b_{19k+16} = 0 \qquad \text{for all $k$}.
    \]
We leave it to the reader to spot a possibly required condition, suggested by the two examples, on the $i$, $j$ and $k$ in $Q(q^i,q^{p})Q(q^j,q^{p})Q(q^k,q^{p})$ for vanishing to occur.

%%%%%%%%%%%%%%%%%%%%%%%%%%%%%%%%%%%%%%%%%%
%%%%%%%%%%    BIBLIOGRAPHY    %%%%%%%%%%%%
%%%%%%%%%%%%%%%%%%%%%%%%%%%%%%%%%%%%%%%%%%

{\allowdisplaybreaks

}

%%%%%%%%%%%%%%%%%%%%%%%%%%%%%%%%%%%%%%%%%%
%%%%%%%%%%   DOCUMENT END   %%%%%%%%%%%%%%
%%%%%%%%%%%%%%%%%%%%%%%%%%%%%%%%%%%%%%%%%%
\end{document}